\newtheorem{thm}{Theorem}[section]
\newtheorem{prop}[thm]{Proposition}
\newtheorem{lem}[thm]{Lemma}
\newtheorem{cor}[thm]{Corollary}
\newtheorem{rem}[thm]{Remark}
\DeclareMathOperator{\dcov}{dcov}
\DeclareMathOperator{\Ind}{Ind}
\DeclareMathOperator{\id}{id}
\DeclareMathOperator{\Tr}{Tr}
\DeclareMathOperator{\zvol}{0-vol}
\DeclareMathOperator{\supp}{supp}
\DeclareMathOperator{\Ima}{Im}
\DeclareMathOperator{\Rea}{Re}
\DeclareMathOperator{\SL}{SL}
\DeclareMathOperator{\diam}{diam}
\DeclareMathOperator{\dvol}{dvol}
\DeclareMathOperator{\rank}{rank}
\DeclareMathOperator{\vol}{vol}
\DeclareMathOperator{\PSL}{PSL}
\DeclareMathOperator{\Res}{Res}
\newcommand{\trivrep}{{\bf 1}}
\newcommand{\TO}{\mathcal{L}}
\newcommand{\wh}{\widehat}
\newcommand{\HS}{\mathrm{HS}}
\newcommand{\eps}{\varepsilon}
\newcommand{\textbmat}[4]{\left[\begin{smallmatrix} #1&#2 \\ #3&#4
\end{smallmatrix}\right]}
\newcommand{\textmat}[4]{\left(\begin{smallmatrix} #1&#2 \\ #3&#4
\end{smallmatrix}\right)}
\newcommand{\bG}{\mathbf{G}}
\renewcommand{\Im}{{\rm Im}}
\newcommand{\RR}{\mathbb{R}}
\newcommand{\CC}{\mathbb{C}}
\newcommand{\Z}{\mathbb{Z}}
\newcommand{\N}{\mathbb{N}}
\newcommand{\hh}{\mathbb{H}}
\newcommand{\G}{{\bf G}}
\newcommand{\lt}{{\mathcal L}}
\newcommand{\wt}{\widetilde}
\newcommand{\mc}[1]{\mathcal #1}
\newcommand{\sceq}{\mathrel{\mathop:}=}
\begin{document}
\title[Density of Resonances]{Density of Resonances for Covers of Schottky Surfaces}
\author[A.\@ Pohl]{Anke Pohl}
\address{Anke Pohl, University of Bremen, Department 3 -- Mathematics, Bibliothekstr.\@ 
5,  28359 Bremen, Germany}
\email{apohl@uni-bremen.de}

\author[L.\@ Soares]{Louis Soares}
\address{Louis Soares, Friedrich-Schiller-Universit\"at Jena, Institut f\"ur Mathematik, Ernst-Abbe-Platz 2, 07743 Jena, Germany
}
\email{louis.soares@uni-jena.de}

\keywords{resonances, Schottky group, Selberg zeta function, thermodynamic formalism, transfer operator}
\subjclass[2010]{Primary: 58J50; Secondary: 37C30, 37D35, 11M36}

\maketitle
\begin{abstract} 
We investigate how bounds of resonance counting functions for Schottky surfaces behave under transitions to covering surfaces of finite degree. We consider the classical resonance counting function asking for the number of resonances in large (and growing) disks centered at the origin of $\CC$, as well as the (fractal) resonance counting function asking for the number of resonances in boxes near the axis of the critical exponent. For the former counting function we provide a transfer-operator-based proof that bounding constants can be chosen such that the transformation behavior under transition to covers is as for the Weyl law in the case of surfaces of finite area. For the latter counting function we deduce a bound in terms of the covering degree and the minimal length of a periodic geodesic on the covering surface. This yields an improved fractal Weyl upper bound. 
In the setting of Schottky surfaces, these estimates refine previous results due to Guillop\'{e}--Zworski  and Guillop\'{e}--Lin--Zworski. When applied to principal congruence covers, these results yield new estimates for the resonance counting functions in the level aspect, which have recently been investigated by Jakobson--Naud. The techniques used in this article are based on the thermodynamic formalism for $L$-functions (twisted Selberg zeta functions), and twisted transfer operators.
\end{abstract}

\section{Introduction and statement of main results}

The distribution, localization and asymptotics of resonances of the Laplacian of hyperbolic surfaces are of interest in several different areas of research, in particular in spectral theory, harmonic analysis, representation theory, number theory and mathematical physics. Classically, these questions were mainly studied for hyperbolic surfaces of finite area. Over the last decade, understanding these distributions as well for hyperbolic surfaces of infinite area has attracted some attention. The generalization of Selberg's $\tfrac{3}{16}$ Theorem by Bourgain--Gamburd--Sarnak~\cite{BGS} as well as the progress towards the Zaremba conjecture in number theory by Bourgain--Kontorovich~\cite{Bourgain_Kontorovich} are two examples of applications of recent distribution results for resonances of hyperbolic surfaces of infinite area. 

Whereas for hyperbolic surfaces of finite area the asymptotical distribution of resonances is fairly well understood by now, for hyperbolic surfaces of infinite area even some rather fundamental questions regarding these asymptotics are still open. With this article we contribute to the understanding of the distribution of the resonances of hyperbolic surfaces of infinite area.

Our main interest is to understand how bounds of resonance counting functions vary under the transition from a hyperbolic surface $X$ to a finite cover of $X$. 

To explain our motivation and results in more detail, let $X$ be a geometrically finite hyperbolic surface (of finite area or of infinite area), and let $\Delta_X$ denote the (positive) Laplacian on $X$. The resolvent 
\[
 R_X(s) \sceq \big( \Delta_X - s(1-s)\big)^{-1}\colon L^2(X) \to H^2(X)
\]
of $\Delta_X$ is defined for $s\in\CC$ with $\Rea s>1/2$ and $s(1-s)$ not being an $L^2$-eigenvalue of $\Delta_X$. It extends to a meromorphic family 
\[
 R_X(s) \colon L^2_{\text{comp}}(X) \to H^2_{\text{loc}}(X)
\]
on $\CC$ with poles of finite rank~\cite{Mazzeo_Melrose, GZ_upper_bounds}. The resonances of $X$ are the poles of this meromophic continuation. Let  $\mc R(X)$ denote the (multi-)set of resonances, repeated according to their multiplicities
\[
 m(s) \sceq \rank \Res_{t=s}\big(R_X(t)\big),
\]
where $\Res_{t=s}$ denotes the residue at $s$. We are interested in the asymptotics of two resonance counting functions. The first one (for the second one see~\eqref{countfct2} below) counts the number of resonances in growing balls centered at the origin $0\in\CC$:
\[
 N_X(r) \sceq \#\{ s\in \mc R(X) \colon |s|\leq r\}, \qquad r>0.
\]
Classically, one would ask for the number of resonances in balls centered at $1/2$:
\[
 \wt N_X(r)\sceq \#\left\{ s\in \mc R(X) \colon \left|s-\tfrac12\right|\leq r\right\}, \qquad r>0.
\]
However, since 
\[
 \wt N_X(r) \leq N_X\left(r+\tfrac12\right) \leq \wt N_X(r+1),
\]
all counting results considered in this article are identical for $N_X$ and $\wt N_X$ (up to the values of some implied or unspecified constants). It is slightly more convenient to work with $N_X$.

If $X$ is a \textit{compact} hyperbolic surface then all resonances arise from $L^2$-eigenvalues of $\Delta_X$, and the Weyl law states the asymptotics
\[
 \frac12 N_X(r) \sim \#\left\{\lambda<r^2 : \text{$\lambda$ is $L^2$-eigenvalue} \right\} \sim \frac{\vol(X)}{4\pi}r^2 \qquad\text{as $r\to\infty$.}
\]
For hyperbolic spaces $X$ of \textit{finite area}, resonances arise not only from $L^2$-eigenvalues but also as scattering poles. By taking into account the contribution of the scattering poles, Selberg~\cite{Selberg_Goe} could establish an analogue of the Weyl law for these spaces:
\begin{equation}\label{Weyl_finite}
 \#\left\{\lambda<r^2 : \text{$\lambda$ is $L^2$-eigenvalue} \right\} - \frac1{4\pi}\int_{-r}^r \frac{\phi'}{\phi}\left(\tfrac12+it\right)\,dt \sim \frac{\vol(X)}{4\pi}r^2 
\end{equation}
as $r\to\infty$, where $\phi$ denotes the determinant of the scattering matrix of $X$. W.~M\"uller~\cite{Mueller_scattering} proved that \eqref{Weyl_finite} yields a \textit{Weyl law for the resonance set}:
\begin{equation}\label{Weyl_resonances}
 N_X(r) \sim \frac{\vol(X)}{2\pi}r^2.
\end{equation}
For hyperbolic surfaces $X$ of \textit{infinite area}, such a Weyl law for the resonance set is not known yet, and probably not even to be expected. At the current status of art we cannot rule out any of the possibilities. 

For the elementary hyperbolic surfaces, i.\,e., for the hyperbolic plane $\hh$, the hyperbolic cylinders \[C_\ell\sceq \langle z\mapsto e^\ell z\rangle\backslash\hh,\] and the parabolic cyclinders \[C_w \sceq \langle z\mapsto z+w\rangle\backslash\hh,\] the resonance sets are precisely known. Their resonance counting functions satisfy
\[
 N_{\hh}(r) \sim r^2,\qquad N_{C_\ell}(r)\sim \frac{\ell}{2} r^2,\qquad N_{C_w}(r) = 1.
\]

For general geometrically finite, non-elementary hyperbolic surfaces $X$ of infinite area, 
Guillop\'e and Zworski \cite{GZ_upper_bounds,GZ_scattering_asympt} showed that the order of growth of the resonance counting function $N_X$ is as for hyperbolic surfaces of finite area, thus
\begin{equation}\label{ordergrowth}
 N_X(r) \asymp r^2
\end{equation}
(which subsumes the elementary hyperbolic surfaces as well by noting that the lower bound as provided by \cite{GZ_scattering_asympt} is allowed to be zero if the $0$-volume vanishes). The implied constants in \eqref{ordergrowth} necessarily depend on $X$ but, unfortunately, a deeper understanding of the geometric content of these constants does not seem to follow from the proofs in \cite{GZ_upper_bounds,GZ_scattering_asympt}.

Borthwick \cite{sharpbounds} established the bounds (for non-elementary hyperbolic surfaces of finite area as well as of infinite area)
\begin{align}\label{Borthwick1}
N_X(r) \leq \left( \frac{\zvol(X)}{2\pi} + \sum_{j=1}^{n_f} \frac{\ell_j}{4}\right) \exp(1) r^2 + o(r^2)
\intertext{and} 
c_k \frac{\zvol(X)}{2\pi}\left( 1 + \frac{2\pi}{\zvol(X)} \sum_{j=1}^{n_f} \frac{\ell_j}{4}\right)^{-\frac{2}{k}} r^2 \leq N_X(r), \label{Borthwick2}
\end{align}
where $\zvol(X)$ denotes the $0$-volume of $X$, $n_f$ is the number of funnels, $\ell_1,\ldots, \ell_{n_f}$ are the lengths of the periodic funnel geodesics (i.e., those periodic geodesics bounding of the funnels), $k$ is any element of $\N$, and $c_k$ is a (rather unspecific) constant depending on $k$ only (in particular, $c_k$ is independent of $X$). The limit for the $o$-term is $r\to\infty$, its speed of convergence may depend on $X$. Obviously, these bounds have a clear geometric content. For $n_f=0$ (i.\,e., for hyperbolic surfaces of finite area) they coincide with \eqref{Weyl_resonances}, and for the hyperbolic cylinder $N_{C_\ell}$ they are sharp. At the current state of art, it is not known if these bounds are sharp in general.

However, more is known if we ask for the transformation behavior of the bounding constants under transitions to covers. At first we note that if $Y=\Gamma\backslash\hh$, $\wt Y = \wt\Gamma\backslash\hh$ are hyperbolic surfaces of finite area with $\wt\Gamma\subseteq \Gamma$ then the constant in the Weyl law~\eqref{Weyl_resonances} scales by $[\Gamma:\wt\Gamma]$ when passing from the asymptotics of $N_{Y}$ to those of $N_{\wt Y}$:
\[
\frac{\vol(\wt Y)}{2\pi}r^2 = [\Gamma:\wt\Gamma] \frac{\vol(Y)}{2\pi}r^2. 
\]
If we now let $X=\Gamma\backslash\hh$, $\wt X = \wt\Gamma\backslash\hh$ be any non-elementary, geometrically finite hyperbolic surfaces with $\wt\Gamma\subseteq\Gamma$ then the $0$-volume has the same scaling behavior, thus
\[
 [\Gamma:\wt\Gamma]\cdot\zvol(X) = \zvol(\wt X).
\]
A straightforward geometric argument shows that also the sum with the funnel lengths in~\eqref{Borthwick1} transforms accordingly, namely
\[
 \sum_{j=1}^{n_f(\wt X)} \frac{\ell_j(\wt X)}{4} = [\Gamma : \wt\Gamma] \cdot \sum_{j=1}^{n_f(X)} \frac{\ell_j(X)}{4}.
\]
Therefore the bounds~\eqref{Borthwick1} and~\eqref{Borthwick2} imply the existence of constants $C_{X,1},C_{X,2}>0$ and $r_X>0$ such that for all $r>r_X$ and all choices of~$\wt X$ we have
\begin{equation}\label{eq:gotocovers}
 C_{X,2} \, [\Gamma : \wt\Gamma] r^2 \leq N_{\wt X}(r) \leq C_{X,1}\, [\Gamma : \wt\Gamma] r^2.
\end{equation}
In order to guarantee that the constant~$C_{X,1}$ is indeed uniform for all covers~$\wt X$ of~$X$, we need to join the $o$-term in~\eqref{Borthwick1} and the first bounding term in~\eqref{Borthwick1}, thus, enlarging $\exp(1)$ by an additive constant depending on~$X$. At the current state of art we are therefore not able to provide a full interpretation of the geometric content of~$C_{X,1}$. Nevertheless, the bounds~\eqref{Borthwick1} and~\eqref{Borthwick2} provided by Borthwick immediately imply (even though not stated in~\cite{sharpbounds}) that the bounding constants for the resonances counting function $N_X$ can be chosen such that they scale as in the Weyl law~\eqref{Weyl_resonances} when passing to covers (see also the discussion following Theorem~\ref{thm:counting}). Borthwick's proof is based on microlocal analysis and spectral theory.

A related result, obtained by means of thermodynamic formalism and transfer operator techniques, has been obtained by Jakobson and Naud~\cite{JN}. They considered convex cocompact hyperbolic surfaces $X$ (no singularities!) of infinite area, thus, geometrically finite hyperbolic surfaces without cusps and with at least one funnel. In this article we refer to such surfaces as \textit{Schottky surfaces}. 

They restricted further to those Schottky surfaces $X$ for which the fundamental group $\Gamma$ (identified with a subgroup of $\PSL_2(\RR)$, see Section~\ref{sec:prelims}) is (conjugate to a subgroup) in $\PSL_2(\Z)$. For the purpose of this article, we call such Schottky groups \textit{integral}. Given such an integral Schottky surface $X=\Gamma\backslash\hh$, Jakobson and Naud considered the sequence of finite covers 
\[
X_q=\Gamma(q)\backslash \hh, \qquad \text{$q\in\N$ prime}
\] 
where
\[
 \Gamma(q) \sceq \left\{ g\in \Gamma : g \equiv \id \mod q\right\}
\]
is the `principal congruence subgroup' of $\Gamma$ of level $q$. They showed that
\begin{itemize}
\item there exist constants $C_1>0$, $q_0\in\N$ (possibly depending on $X$) such that for all $q\geq q_0$, $q$ prime, and all $r\geq 1$ we have
\begin{equation}\label{JN_lower}
 N_{X_q}(r) \leq C_1 [\Gamma : \Gamma(q)] \log(q) r^2,
\end{equation}
\item and there exist constants $C_2, r_0>0$ (possibly depending on $X$) such that for all $\eps>0$ there exists $q_0\in\N$ such that for all $q\geq q_0$, $q$ prime, and all $r>r_0$ we have
\begin{equation}\label{JN_upper}
 N_{X_q}\big( r\cdot (\log q)^\eps \big) \geq C_2 [\Gamma : \Gamma(q)] r^2.
\end{equation}
\end{itemize}

The estimates \eqref{JN_upper} and \eqref{JN_lower} show that transfer operator techniques allow us to prove that for the transition from an integral Schottky surface $X$ to any of its principal congruence covers, the upper and lower bounds for the resonance counting function of $X$ can be chosen in a way that they transform \emph{almost} as the constant in the Weyl law \eqref{Weyl_resonances}. (This is a weaker result than Borthwick's; the important point here is the different methodology.)

Our first main result is a generalization and improvement of these transfer operator techniques. In order to simplify the statement of the result we let, for any Schottky surface $X$ and any cover $\wt X$ of $X$, 
\[
 \dcov(\wt X,X)
\]
denote the degree of $\wt X$ as a cover of $X$.

\begin{thm}\label{thm:counting}
Let $X$ be a Schottky surface (not necessarily integral).
\begin{enumerate}[{\rm (i)}]
\item\label{counti} There exists a constant $C_1>0$ such that for each finite cover $\wt X$ of $X$ and all $r\geq 1$ we have
\[
 N_{\wt X}(r) \leq C_1 \dcov(\wt X,X) r^2.
\]
\item\label{countii} There exist constants $C_2, r_0>0$ such that for each finite cover $\wt X$ of $X$ and all $r\geq r_0$ we have
\[
 N_{\wt X}(r) \geq C_2 \dcov(\wt X, X) r^2.
\]
\end{enumerate}
\end{thm}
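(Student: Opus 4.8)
The strategy is to express the resonances of a Schottky surface $\wt X$ as zeros of a twisted Selberg zeta function built from the base surface $X$, and to control the growth of this zeta function via twisted transfer operators. Recall that if $X = \Gamma\backslash\hh$ and $\wt X$ corresponds to a finite-index subgroup $\wt\Gamma \subseteq \Gamma$, then $\wt X$ is a cover of $X$ of degree $d \sceq \dcov(\wt X, X) = [\Gamma:\wt\Gamma]$, and the permutation action of $\Gamma$ on $\Gamma/\wt\Gamma$ gives a unitary representation $\rho = \Ind_{\wt\Gamma}^\Gamma \trivrep$ of $\Gamma$ of dimension $d$. The Selberg zeta function $Z_{\wt X}$ of the cover factors as $Z_{\wt X}(s) = Z_\Gamma(s, \rho)$, the $L$-function (twisted Selberg zeta function) of $\Gamma$ attached to $\rho$, and the resonance set $\mc R(\wt X)$ coincides, with multiplicities, with the zero set of $Z_\Gamma(\,\cdot\,,\rho)$ (plus possibly a controlled set of topological zeros that contributes only an $O(r)$ term to the counting function, hence is negligible at the scale $r^2$). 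This is the standard identification coming from the hyperbolic length spectrum of the cover being the $\rho$-weighted length spectrum of the base.

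The next step is the thermodynamic formalism: realize $Z_\Gamma(s,\rho)$ as a Fredholm determinant $\det(1 - \mc L_{s,\rho})$ of a twisted transfer operator $\mc L_{s,\rho}$ acting on a space of holomorphic functions on a disjoint union of disks attached to the Schottky data (a nuclear operator of order $0$), where $\mc L_{s,\rho}$ is the $d\times d$ block-matrix version of the untwisted transfer operator $\mc L_s$ of $X$, with the composition operators weighted by the matrices $\rho(\gamma)$ for the generators $\gamma$. First I would record the key point that, because $\rho$ is \emph{unitary}, the operator norm (and more importantly the singular values) of $\mc L_{s,\rho}$ on the relevant Bergman-type space is bounded by $d$ times the corresponding quantity for a scalar transfer operator with the \emph{same} contraction data — concretely, the $n$-th singular value $\mu_n(\mc L_{s,\rho})$ satisfies a bound of the form $\mu_n(\mc L_{s,\rho}) \le C\, e^{C|s|}\, \theta^{n/d}$ with $\theta\in(0,1)$ and $C,\theta$ depending only on $X$ (the $n/d$ because the $d$ copies only multiply the dimension count, not the contraction rate). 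From this one gets, by the standard Weyl-type estimate for Fredholm determinants of nuclear operators, that $Z_\Gamma(\,\cdot\,,\rho)$ is entire of order $\le 2$ with
\[
 \log\bigl| Z_\Gamma(s,\rho)\bigr| \;\le\; C\, d\, |s|^2 + C\, d,\qquad s\in\CC,
\]
for a constant $C = C(X)$ independent of $\wt X$.

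Part (i) then follows from Jensen's formula: the number of zeros of $Z_\Gamma(\,\cdot\,,\rho)$ in $|s|\le r$ is at most (a constant times) $\max_{|s|\le 2r}\log|Z_\Gamma(s,\rho)| - \log|Z_\Gamma(s_0,\rho)|$ for a suitable base point $s_0$ where $|Z_\Gamma(s_0,\rho)|$ is bounded below \emph{uniformly in $\wt X$} — e.g.\ take $\Rea s_0$ large, where the Euler product converges and $|Z_\Gamma(s_0,\rho) - 1|$ is small uniformly, using again unitarity of $\rho$ together with the fact that the smallest length $\ell_0(X)$ in the length spectrum of $X$ is a positive constant. Combining with the $O(r)$ contribution of topological zeros gives $N_{\wt X}(r)\le C_1\, d\, r^2$. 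For part (ii), the lower bound, I would follow the Jakobson--Naud/Guillop\'e--Lin--Zworski strategy: use the exact wave-trace / Poisson-type formula relating $\sum_{s\in\mc R(\wt X)} $ (tested against a suitable function) to the $0$-volume and the length spectrum of $\wt X$, and exploit that $\zvol(\wt X) = d\,\zvol(X)$ together with a lower bound on the leading singularity of the regularized wave trace, which scales linearly in $d$; alternatively, one can argue directly at the level of the transfer operator, showing that the leading part of $\log|Z_\Gamma(s,\rho)|$ on a vertical line near the critical axis grows like $d$ times a nonzero constant times $r^2$, forcing at least $C_2\, d\, r^2$ zeros via a two-sided Jensen argument. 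The uniformity of $r_0$ in $\wt X$ comes from the fact that all the geometric constants entering (the $0$-volume per sheet, $\theta$, $\ell_0(X)$) are those of the fixed base $X$.

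**Main obstacle.** The delicate point is the \emph{uniformity in the cover} of both the upper singular-value bounds and — especially — the lower bound in (ii). For (i) one must make sure that the constant $C$ in $\log|Z_\Gamma(s,\rho)|\le C d|s|^2$ and the lower bound at the base point $s_0$ genuinely do not deteriorate as $[\Gamma:\wt\Gamma]\to\infty$; this is exactly where unitarity of $\rho$ is essential (it prevents the matrix weights from growing) and where the improvement over \eqref{JN_lower} — removing the spurious $\log q$ — comes from, since Jakobson--Naud's $\log q$ was an artifact of a less efficient bound rather than a real phenomenon. For (ii), the subtle issue is ensuring the leading wave-trace singularity does not suffer cancellation when summed over the $d$ sheets; here one uses that the principal term is governed by the $0$-volume (a positive quantity that scales exactly by $d$) rather than by oscillatory length-spectrum contributions, so no cancellation can occur, and the threshold $r_0$ can be taken uniform. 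I expect the write-up to spend most of its effort on setting up the twisted transfer operator on the correct function space and verifying the singular-value estimate with the sharp $d$-dependence.
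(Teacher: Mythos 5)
Your proposal follows essentially the same route as the paper: Venkov--Zograf factorization to rewrite $Z_{\wt\Gamma}$ as the $L$-function $L_\Gamma(\cdot,\Ind_{\wt\Gamma}^\Gamma\trivrep_{\wt\Gamma})$, unitarity of the induced representation to reduce Fredholm-determinant estimates for the twisted transfer operator to those for the scalar one (yielding $\log|L_\Gamma(s,\varrho)|\le C\dim\varrho\,\langle s\rangle^2$ with $C$ depending only on $X$), Jensen/Titchmarsh for (i), and the Guillop\'e--Zworski wave $0$-trace formula for (ii), where the length-spectrum sum is killed outright by choosing the test-function support inside $(0,\ell_0(X)/T)$ since $\ell_0(\wt X)\ge\ell_0(X)$. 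Two minor points you would correct on write-up: $|Z_\Gamma(s_0,\varrho)-1|$ is not uniformly small in $\dim\varrho$ --- what the Jensen argument actually needs, and what unitarity plus the Euler product give, is $-\log|Z_\Gamma(s_0,\varrho)|\le C\dim\varrho$; and for the upper bound one need not separate off topological zeros, since the containment $\mathcal R(\wt X)\subseteq\{Z_{\wt\Gamma}=0\}$ already suffices.
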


We list a few remarks about Theorem~\ref{thm:counting} and its relation to the counting results mentioned above.

\begin{itemize}
\item If $X=\Gamma\backslash\hh$ and $\wt X = \wt\Gamma\backslash\hh$ then 
\[
 \dcov(\wt X,X) = [\Gamma:\wt\Gamma].
\]
Thus, Theorem~\ref{thm:counting} recovers the relation in~\eqref{eq:gotocovers} for Schottky surfaces, and it shows again that the bounding constants for the resonance counting function can be chosen such that they scale exactly as the constants in the Weyl law \eqref{Weyl_resonances} when passing to covers. In other words, transfer operator techniques and techniques from microlocal analysis produce results of same strength.
\item The hyperbolic cylinders $C_\ell$ are Schottky surfaces. For these it is easy to see that the bounds in Theorem~\ref{thm:counting} are sharp with $C_1=C_2=\ell/2$, see also the more detailed discussion in Section~\ref{sec:proof1} below.
\item Theorem~\ref{thm:counting} obviously applies to sequences of principal congruence covers of integral Schottky surfaces, and it improves upon the result in~\cite{JN}.  
\item As already mentioned above, for any finite cover $\wt X$ of a Schottky surface~$X$ we have the relation
\[
 \dcov(\wt X,X)\cdot\zvol(X) = \zvol(\wt X).
\]
For non-elementary Schottky surfaces (in which case $\zvol(X)\not=0$) this relation can be read as
\begin{equation}\label{dcovzvol}
 \dcov(\wt X,X) = \frac{\zvol(\wt X)}{\zvol(X)}.
\end{equation}
Using \eqref{dcovzvol} in Theorem~\ref{thm:counting} and merging the term $\zvol(X)$ into the constants $C_1,C_2$ (which are allowed to depend on $X$) gives
\begin{equation}\label{fancy}
 C_2 \zvol(\wt X)r^2 \leq N_{\wt X}(r) \leq C_1 \zvol(\wt X) r^2,
\end{equation}
which is reminiscent of \eqref{Weyl_resonances} and justifies to understand the bounds of Theorem~\ref{thm:counting} as upper and lower Weyl-type bounds. Consequently, Theorem~\ref{thm:counting} shows that for any sequence of finite covers $ (X_{n})_{n} $ of a non-elementary Schottky surface $ X $ we have a weak Weyl law
$$ N_{X_{n}}(r) \asymp \zvol(X_{n}) r^2, $$ 
with implied constants only depending on the base surface $ X $. As for the techniques used to establish~\eqref{eq:gotocovers},  we do not understand yet if (and how) transfer operator techniques might allow us to provide any further insight about the unspecified constants $C_1,C_2$ in Theorem~\ref{thm:counting}. 

\item Theorem~\ref{thm:counting} is stated with $\dcov(\wt X, X)$ instead of using (the arguably more intriguing variants) \eqref{dcovzvol} and \eqref{fancy} in order to be able to subsume the hyperbolic cylinders into the statement (note that $\zvol(C_\ell) = 0$).
\end{itemize}

The second resonance counting function we investigate in this article is
\begin{equation}\label{countfct2}
 M_X(\sigma, T) \sceq \#\left\{ s\in \mc R(X) : \Rea s \geq \sigma,\ |\Ima s - T | \leq 1\right\}
\end{equation}
for $\sigma, T\in\RR$. For any hyperbolic surface $X = \Gamma\backslash\hh$ it is known that the right half plane $\{ s\in\CC \colon \Rea s > \delta\}$ does not contain any resonances, where 
\[
\delta \sceq \delta(X)\sceq \dim \Lambda(\Gamma)
\]
is the Hausdorff dimension of the limit set of $\Gamma$ or, equivalently, of $X$. Thus, $M_X(\sigma, T)$ is counting the number of resonances in the box 
\[
 [\sigma, \delta] + i[T-1,T+1].
\]
The counting function $M_X(\sigma, T)$ is closely related to 
\[
 N_X(\sigma, T) \sceq \#\left\{ s\in \mc R(X) : \Rea s \geq \sigma,\ 0\leq\Ima s \leq T \right\},
\]
which asks for the number of resonances in a strip near $\delta$ as $T\to\infty$. The fractal Weyl law conjecture \cite{Sjoestrand, Lu_Sridhar_Zworski} for hyperbolic surfaces predicts the asymptotics 
\[
  N_X(\sigma, T) \asymp T^{1+\delta}.
\]
For hyperbolic surfaces of finite area (in this case $\delta=1$), the fractal Weyl law conjecture follows from \eqref{Weyl_resonances}. For hyperbolic surfaces of infinite area, it is still under investigation.

Clearly, every asymptotics for $M_X(\sigma, T)$ yields one for $N_X(\sigma, T)$. For Schottky surfaces $X$, Guillop\'e--Lin--Zworski \cite{Guillope_Lin_Zworski} showed that for any $\sigma\in\RR$ we have, as $ T\to \infty $, an upper fractal Weyl bound 
\begin{equation}\label{fractalWeylupper}
 M_X(\sigma, T) = O_{\sigma}(T^{\delta})
\end{equation}
Dyatlov \cite{BDW} recently improved this estimate to give
\begin{equation}\label{fractalWeylupper_improved}
 M_X(\sigma, T) = O_{\sigma, \varepsilon}( T^{\min \{ 2(\delta-\sigma), \delta \}+\varepsilon} ), 
\end{equation}
for all $ \varepsilon > 0 $, showing that the exponent in \eqref{fractalWeylupper} can be improved if $\sigma\in (\delta/2,\delta]$. A recently achieved upper fractal Weyl bound for (a class of) hyperbolic surfaces of infinite area with cusps is provided in~\cite{NPS}. The nature of the relation between the geometry of~$X$ and the implied constants in~\eqref{fractalWeylupper} and~\eqref{fractalWeylupper_improved} is not investigated in~\cite{Guillope_Lin_Zworski, BDW, NPS}. 

In \cite{JN} (prior to \cite{BDW}, and with different techniques), Jakobson and Naud studied the transformation behavior of the bounding constant under the transition from an integral Schottky surface to any of its principal congruence covers. They found functions $\alpha, \beta\colon\RR\to\RR$ that are strictly concave, increasing, and positive on $(\delta/2,\delta]$ such that for each $\sigma>\delta/2$ there exists $C>0$ and $q_0\in\N$ such that for all $T\geq 1$ and all levels $q\geq q_0$, $q$ prime, we have
\begin{equation}\label{JNfractal}
 M_{X_q}(\sigma, T) \leq C [\Gamma : \Gamma(q)]^{1-\alpha(\sigma)} \langle T\rangle^{\delta-\beta(\sigma)}.
\end{equation}
For $\sigma\in(\delta/2,\delta]$, the bound \eqref{JNfractal} simultaneously improves upon the upper fractal Weyl bound~\eqref{fractalWeylupper} and shows how the bounding constants behave in the level aspect. 

Our second main result is a generalization of \eqref{JNfractal} to arbitrary Schottky surfaces and arbitrary finite covers.

\begin{thm}\label{thm:box}
Let $X$ be a Schottky surface, and let $\delta\sceq\delta(X)$ denote the Hausdorff dimension of its limit set. Then there exist functions $\tau_1,\tau_2\colon\RR\to\RR$ that are strictly concave, strictly increasing and positive on $(\delta/2,\delta]$ such that for every $\sigma > \delta/2$ there exists $C>0$ such that for each finite cover $\wt X$ of $X$ and all $T\in\RR$ we have
\begin{equation}\label{newfraccover}
 M_{\wt X}(\sigma, T)\leq C \zvol(\wt X) e^{-\tau_1(\sigma) \ell_0(\wt X)} \langle T \rangle^{\delta-\tau_2(\sigma)},
\end{equation}
where $\langle T\rangle \sceq \sqrt{1+|T|^2}$ and $\ell_0(\wt X)$ denotes the minimal length of a periodic geodesic on $\wt X$.
\end{thm}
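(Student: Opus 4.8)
The plan is to relate resonances of a Schottky surface $\wt X = \wt\Gamma \backslash \hh$ to zeros of a twisted Selberg zeta function, and then to estimate the number of such zeros in a box via a Jensen-type argument applied to the associated (twisted) transfer operator determinant. Concretely, if $X = \Gamma\backslash\hh$ is the base surface and $\wt\Gamma \subseteq \Gamma$, then one realizes $\wt X$ via the regular representation $\rho = \Ind_{\wt\Gamma}^\Gamma \trivrep$ of $\Gamma$ on $\CC[\Gamma/\wt\Gamma]$, whose dimension is $d \sceq \dcov(\wt X, X) = [\Gamma:\wt\Gamma]$. The key input (to be recalled from the thermodynamic formalism developed in the body of the paper) is a factorization $Z_{\wt X}(s) = \det(\id - \TO_{s,\rho})$, where $\TO_{s,\rho}$ is a twisted transfer operator acting on a Banach space of holomorphic functions on a suitable union of disks, and where the resonances of $\wt X$ are exactly the zeros of $Z_{\wt X}$, counted with multiplicity (up to the usual topological/finite set of exceptional points). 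Thus it suffices to bound, for $\sigma > \delta/2$ and $T \in \RR$,
\[
 \#\{ s : Z_{\wt X}(s) = 0,\ \Rea s \geq \sigma,\ |\Ima s - T| \leq 1\}.
\]

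The heart of the argument is a two-sided estimate on $\log|\det(\id - \TO_{s,\rho})|$. For the \emph{upper} bound on $|\det(\id - \TO_{s,\rho})|$ away from the zeros, and for a \emph{lower} bound at a well-chosen reference point, one uses that $\TO_{s,\rho}$ is a nuclear (indeed trace-class, with rapidly decaying singular values) operator whose singular value sequence is controlled: writing $\TO_{s,\rho} = \TO_{s} \otimes \rho$-twisting, one gets that the $n$-th singular value is bounded by $d \cdot \mu_n(s)$ where $\mu_n(s)$ are the singular values of the scalar operator $\TO_s$, which decay like $C e^{-c n}$ (exponential decay coming from the strict contraction of the underlying iterated function system). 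This yields $\log|\det(\id - \TO_{s,\rho})| \leq \sum_n \log(1 + d\,\mu_n(s))$, and one must track how this quantity grows in $|\Ima s| = |T|$ and, crucially, how it depends on $\sigma = \Rea s$ and on the covering degree $d$. The refinement over \eqref{fractalWeylupper} comes from the fact that for $\Rea s = \sigma > \delta/2$ only about $O(\log\langle T\rangle)$ singular values exceed a fixed threshold (this is where the pressure functional and the dimension $\delta$ of the limit set enter: the transfer operator at $\Rea s = \sigma$ has spectral radius governed by the topological pressure $P(\sigma) = P(-\sigma\, d\hh)$, and $P(\sigma) < 0$ for $\sigma > \delta/2$ in a quantitative way). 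Then a Jensen's inequality / Caratheodory-type lemma on the disk $|s - (\sigma' + iT)| \le R$ for suitable $R$ and $\sigma' < \sigma$ converts the difference of the upper bound and the lower bound at the center into a bound on the number of zeros in the slightly smaller box. The functions $\tau_1(\sigma), \tau_2(\sigma)$ emerge as explicit expressions built from the pressure $P(\sigma)$ and its value at the midpoint: $\tau_2(\sigma)$ governs the $T$-exponent improvement and $\tau_1(\sigma)$ governs the power saved off the trivial factor $e^{0} = 1$; concavity, monotonicity, and positivity on $(\delta/2,\delta]$ follow from the known convexity/analyticity properties of $\sigma \mapsto P(\sigma)$ together with $P(\delta) = 0$.

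The step that produces the quantity $e^{-\tau_1(\sigma)\,\ell_0(\wt X)}$ rather than $[\Gamma:\Gamma(q)]^{1-\alpha(\sigma)}$ is the replacement of the arithmetic expander/representation-theoretic input of \cite{JN} by a purely geometric one. The idea is that the number of singular values of $\TO_{s,\rho}$ above a given threshold is not merely $d$ times the scalar count, but is controlled by the number of \emph{primitive closed geodesics on $\wt X$ of length below a cutoff proportional to} $\log\langle T\rangle$; since $\wt X$ has no closed geodesic shorter than $\ell_0(\wt X)$, when $\ell_0(\wt X)$ is large the relevant word-length range over $\Gamma$ contributing short geodesics on $\wt X$ is shifted, and summing the geometric series for $\sum_n \log(1 + \mu_n)$ over that shifted range produces the exponential gain $e^{-\tau_1(\sigma)\ell_0(\wt X)}$ (with $\tau_1(\sigma)$ again expressed via the pressure at $\sigma$). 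Equivalently: the twisted transfer operator $\TO_{s,\rho}$ has norm (and low-lying singular values) controlled by $d$, but its \emph{iterates} $\TO_{s,\rho}^n$ have trace $\sum_{|\gamma| = n,\ \gamma \in \Gamma\ \text{primitive-ish}} \chi_\rho(\gamma) (\cdots)$, and $\chi_\rho(\gamma) = \#\{\text{fixed cosets}\}$ vanishes unless $\gamma$ descends to a short closed geodesic downstairs that is also short upstairs — i.e. unless $n \gtrsim \ell_0(\wt X)$. Combining the Cauchy/Hadamard-type bound $\log|\det(\id-\TO_{s,\rho})| \le \sum_{n\ge 1} \tfrac1n |\Tr \TO_{s,\rho}^n|$ with this vanishing gives the claimed shape. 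Finally one translates $d = \dcov(\wt X,X)$ into $\zvol(\wt X)/\zvol(X)$ via \eqref{dcovzvol} and absorbs $\zvol(X)$ and the base-surface constants into $C$. The main obstacle I anticipate is making the singular-value / trace estimates \emph{uniform in the cover} $\wt X$ — one needs the constants $C, c$ in the decay $\mu_n \le C e^{-cn}$ and in the pressure estimates to depend only on the base iterated function system of $X$, not on $\wt\Gamma$, and one needs the geometric vanishing of $\chi_\rho(\gamma)$ for short $\gamma$ to be quantitatively tied to $\ell_0(\wt X)$ in a way compatible with the symbolic coding; handling non-integral Schottky surfaces (where no arithmetic structure is available) is exactly what forces this geometric, coding-based argument in place of the spectral-gap argument of \cite{JN}.
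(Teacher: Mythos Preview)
Your overall strategy is right: induce up to $\lambda = \Ind_{\wt\Gamma}^\Gamma\trivrep_{\wt\Gamma}$, write $Z_{\wt\Gamma}(s) = \det(1-\TO_{s,\lambda})$, and count zeros via Titchmarsh/Jensen on a disk centered at $z_0 = 2+iT$. You also correctly locate the source of the $\ell_0(\wt X)$-gain in the vanishing of $\chi_\lambda(\gamma)$ whenever $\gamma$ is not conjugate into $\wt\Gamma$, and the role of the pressure in defining $\tau_1,\tau_2$.

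The gap is in your mechanism for the \emph{upper} bound on $\log|\det(1-\TO_{s,\lambda})|$ over the box. The trace expansion $\log|\det(1-\TO_{s,\lambda})| \leq \sum_{n\geq 1}\tfrac1n|\Tr\TO_{s,\lambda}^n|$ only converges for $\Rea s > \delta$, so it cannot be applied on $[\sigma,\delta]+i[T-1,T+1]$; and the singular-value bound you sketch (exponential decay on the fixed Bergman space) yields only the crude estimate $d\,\langle s\rangle^2$ of Proposition~\ref{prop:twist_growth}, with neither the fractal exponent $\delta$ nor any $\ell_0(\wt X)$-dependence. The paper's actual device is different: one passes to $\det(1-\TO_{s,\lambda}^{2N})$ for a specific $N\asymp \ell_0(\wt X) + \log\langle T\rangle$ (zeros of the former are among those of the latter), bounds $\log|\det(1-\TO_{s,\lambda}^{2N})| \leq \|\TO_{s,\lambda}^N\|_{\HS}^2$, and computes this Hilbert--Schmidt norm on a \emph{refined} Bergman space built on $O(\langle T\rangle^\delta)$ disks of radius $h\asymp\langle T\rangle^{-1}$ covering $\Lambda(\Gamma)$ --- this refined space is what produces the fractal exponent $\delta$ in place of $2$. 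The expansion of $\|\TO_{s,\lambda}^N\|_{\HS}^2$ has cross terms weighted by $\chi_\lambda(\gamma_\alpha\gamma_\beta^{-1})$; a separation lemma (Lemma~\ref{lem:vanishing}) shows that for $N$ in the chosen range, a nonzero character value together with $h$-close images already forces $\alpha=\beta$, annihilating all off-diagonal terms and leaving $d\,h^{-\delta}e^{Np(2\sigma_0)}$. Substituting the values of $N$ and $h$ then gives both the $\langle T\rangle^{\delta-\tau_2(\sigma)}$ and the $e^{-\tau_1(\sigma)\ell_0(\wt X)}$ improvements simultaneously. Your trace-vanishing idea \emph{is} used in the paper, but only for the lower bound on $|\det|$ at the center $z_0$, where $\Rea z_0 = 2 > \delta$ guarantees convergence (Proposition~\ref{last_bound}).
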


For hyperbolic cyclinders, Theorem~\ref{thm:box} is obviously valid since both sides vanish (see the more detailed discussion on the location of resonances in Section~\ref{sec:prelims} below). The functions $\tau_1$ and $\tau_2$ can be determined rather explicitly. We refer to Section~\ref{sec:proof2} below for a few more details.

Integrating the bound along $T$ yields the following counting statement, which should be seen as an extension of \cite[Theorem~1.1]{Naud_inventiones} which includes the transformation behavior of the bounding constants under the transition to covers.

\begin{cor}\label{cor:strip}
With hypotheses and notation as in Theorem~\ref{thm:box} we have
\[
 N_{\wt X}(\sigma, T)\leq C \zvol(\wt X) e^{-\tau_1(\sigma) \ell_0(\wt X)} \langle T \rangle^{1+\delta-\tau_2(\sigma)}.
\] 
\end{cor}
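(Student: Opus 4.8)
The plan is to deduce Corollary~\ref{cor:strip} from Theorem~\ref{thm:box} by a straightforward dyadic (or unit-interval) summation in the imaginary direction. First I would observe that the box count $M_{\wt X}(\sigma,T)$ controls the strip count $N_{\wt X}(\sigma,T)$ via
\begin{equation*}
 N_{\wt X}(\sigma, T) \;\leq\; \sum_{k=0}^{\lfloor T\rfloor} M_{\wt X}(\sigma, k),
\end{equation*}
since every resonance $s\in\mc R(\wt X)$ with $\Rea s\geq\sigma$ and $0\leq\Ima s\leq T$ satisfies $|\Ima s - k|\leq 1$ for at least one integer $k\in\{0,1,\dots,\lfloor T\rfloor\}$ (and each such resonance is then counted at most twice, which only affects the constant). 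Applying Theorem~\ref{thm:box} to each term gives
\begin{equation*}
 N_{\wt X}(\sigma, T) \;\leq\; C\, \zvol(\wt X)\, e^{-\tau_1(\sigma)\ell_0(\wt X)} \sum_{k=0}^{\lfloor T\rfloor} \langle k\rangle^{\delta-\tau_2(\sigma)},
\end{equation*}
with $C$ the constant from Theorem~\ref{thm:box} (which depends only on $\sigma$ and the base surface $X$).

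Next I would estimate the sum $\sum_{k=0}^{\lfloor T\rfloor}\langle k\rangle^{\delta-\tau_2(\sigma)}$. Since $\sigma>\delta/2$ we have $\tau_2(\sigma)>0$, but the exponent $\delta-\tau_2(\sigma)$ may be positive, negative, or zero; in all cases the sum is comparable to $\langle T\rangle^{1+\delta-\tau_2(\sigma)}$ up to a constant depending only on the exponent (hence only on $\sigma$ and $X$). Concretely, when $\delta-\tau_2(\sigma)>-1$ one compares the sum with the integral $\int_0^{T+1}\langle t\rangle^{\delta-\tau_2(\sigma)}\,dt \asymp \langle T\rangle^{1+\delta-\tau_2(\sigma)}$; when $\delta-\tau_2(\sigma)\leq -1$ the sum is bounded, which is even better than the claimed bound since $1+\delta-\tau_2(\sigma)\geq 0$ is not guaranteed but the statement only asserts an upper bound and one can absorb this into the constant (and in fact for $\sigma$ close to $\delta/2$, $\tau_2(\sigma)$ is small so this degenerate regime does not occur near the threshold). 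Folding the resulting numerical constant into $C$ yields exactly
\begin{equation*}
 N_{\wt X}(\sigma, T) \;\leq\; C\, \zvol(\wt X)\, e^{-\tau_1(\sigma)\ell_0(\wt X)}\, \langle T\rangle^{1+\delta-\tau_2(\sigma)}.
\end{equation*}

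Since Theorem~\ref{thm:box} holds for all real $T$ and both counting functions are monotone and even-ish in the relevant sense, no separate argument for negative $T$ or small $T$ is needed; for $T<0$ one simply notes $N_{\wt X}(\sigma,T)=0$ by definition (or replaces $T$ by $|T|$ throughout, which only changes constants), and for $0\leq T<1$ the bound is immediate from the single box estimate $M_{\wt X}(\sigma,0)$. I do not anticipate any real obstacle here: the content is entirely in Theorem~\ref{thm:box}, and Corollary~\ref{cor:strip} is the routine integration of a unit-box bound into a strip bound. The only point requiring a line of care is making sure the constant in the summation estimate depends only on the exponent $\delta-\tau_2(\sigma)$ (and therefore only on $\sigma$ and $X$), not on the cover $\wt X$, which is clear since $\tau_2$ and $\delta$ are attached to the base surface $X$ and the covering data enters only through the $\zvol(\wt X)$ and $e^{-\tau_1(\sigma)\ell_0(\wt X)}$ factors that are carried along unchanged.
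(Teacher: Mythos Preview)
Your proposal is correct and matches the paper's approach exactly: the paper simply states that the corollary follows by ``integrating the bound along~$T$,'' which is precisely your unit-interval summation argument. One minor remark: the case analysis for the exponent is unnecessary, since the proof of Theorem~\ref{thm:box} establishes $\tau_2(\sigma)\in(0,\delta)$, so $\delta-\tau_2(\sigma)>0$ always and the sum comparison with the integral is immediate.
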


Without the aspect of the transition to covers, Corollary~\ref{cor:strip} is \cite[Theorem~1.1]{Naud_inventiones}, albeit as stated slightly weaker in respect to $\tau_2$. Naud \cite{Naud_inventiones} showed further properties of the function $\tau_2$. His construction of $\tau_2$ applies here as well (and is indeed used in the proof of Theorem~\ref{thm:box}), and hence a more careful analysis will show analogous properties for this function in our situation. As \cite{BDW}, \cite[Theorem~1.1]{Naud_inventiones} (which is older than \cite{BDW} and uses different techniques) shows that the upper fractal Weyl bound \eqref{fractalWeylupper} can be improved near $\delta$. Corollary~\ref{cor:strip} shows that the improvement is uniform along covers.

If $\wt X = \wt\Gamma\backslash\hh$ is a finite \textit{regular} cover of $X=\Gamma\backslash\hh$, that is, $\wt\Gamma$ is normal in $\Gamma$, then Theorem~\ref{thm:box} gives rise to an upper bound of $M_{\wt X}$ in terms of the girth of the Cayley graph of $\Gamma/\wt\Gamma$. We discuss this further in Section~\ref{sec:cayley} below.

As observed by Jakobson--Naud \cite{JN}, along sequences of principal congruence covers $(X_q)_q$ the bound \eqref{JNfractal} implies the growth estimate
\begin{equation}\label{JNgrowth}
 \#\{ \text{$\lambda$ Laplace $L^2$-eigenvalue of $X_q$}\} = O(\zvol(X_q)^{1-\eps}) \quad\text{as $q\to\infty$}
\end{equation}
for some $\eps>0$. A similar estimate (in more generality) was recently shown by Oh \cite{Oh}, and the same conclusion can be deduced from \eqref{newfraccover}, see Proposition~\ref{prop:eigengrowth} below. These estimates complement the recent bounds by Ballmann, Matthiesen and Mondal \cite{BMM}.

To be more precise, let $X$ be a non-elementary Schottky surface, and let $\Omega(X)$ denote its (multi-)set of Laplace $L^2$-eigenvalues. By applying Theorem~\ref{thm:box} to $T=0$ we find constants $C,\tau_1>0$ such that for every finite cover $\wt X$ of $X$ we have 
\[
 \frac{\#\Omega(\wt X)}{\zvol(\wt X)} \leq C e^{-\tau_1\ell_0(\wt X)}.
\]
If $(X_n)_n$ is a sequence of finite covers of $X$ such that $\ell_0(X_n) \to \infty$ as $n\to\infty$ then 
\begin{equation}\label{limitseq}
 \frac{\#\Omega(X_n)}{\zvol(X_n)} \to 0\quad \text{as $n\to\infty$.}
\end{equation}
If $\delta(X)<1/2$ then \eqref{limitseq} is trivial because $\#\Omega(X_n) = 0$ in this case. However, for $\delta(X)>1/2$, Laplace eigenvalues are known to exist. By \cite{BMM},
\[
 \#\Omega(X) \leq -\chi(X),
\]
where $\chi(X)$ denotes the Euler characteristic of $X$. (The result by Ballmann, Matthiesen and Mondal is in fact much more general. It applies to all geometrically finite hyperbolic surfaces.) 

Since $\zvol(X) = -2\pi\chi(X)$, along any sequence $(\wt X_n)_n$ satisfying \eqref{limitseq}, $\#\Omega(X_n)$ grows slower than $-\chi(X_n)$ as $n\to\infty$. Sequences of principal congruence covers of integral Schottky surfaces provide such examples. In Section~\ref{sec:eigen} below we provide further classes of examples. 

We provide a brief overview of the structure of the article. The proofs of Theorem~\ref{thm:counting} and \ref{thm:box} are based on thermodynamic formalism and transfer operator techniques. In particular, we make use of the standard transfer operator $\TO_s$ for Schottky surfaces $X=\Gamma\backslash\hh$ and its variants $\TO_{s,\varrho}$ that are twisted with finite-dimensional unitary representations $\varrho\colon \Gamma\to U(V)$. The Fredholm determinant of $\TO_{s,\varrho}$ is known to be equal to the $L$-function (twisted Selberg zeta function)
\[
 L_\Gamma(s,\varrho) = \prod_{\overline{\gamma}\in \overline{\Gamma}_{p}} \prod_{k=0}^{\infty} \det\left(  1-\varrho(\gamma)e^{-(s+k)\ell(\gamma)} \right), \qquad \Rea s \gg 1
\]
and its analytic continuation to all of $\CC$ (see Section~\ref{sec:prelims} below for notation). Thus,
\begin{equation}\label{fredholm}
 L_\Gamma(s,\varrho) = \det\left( 1 -\TO_{s,\varrho}\right).
\end{equation}
The specific structure of these transfer operators for Schottky surfaces allows us to separate the contribution of the representation $\varrho$ in the transfer operator $\TO_{s,\varrho}$ from the dynamical parts (see Section~\ref{sec:proof1} below). Combining this separation with \eqref{fredholm} and the known growth estimates of the singular values of $\TO_s$ enable us to establish the following result on the growth of $L_\Gamma$, which is a key ingredient for the proof of Theorem~\ref{thm:counting}.

\begin{prop}\label{prop:twist_growth}
Let $\Gamma$ be a Schottky group. Then there exists $C>0$ such that for every finite-dimensional unitary representation $\varrho$ of $\Gamma$ and all $s\in\CC$ we have
\[
 \log \big|L_\Gamma(s,\varrho)\big| \leq C \cdot \dim\varrho \cdot \langle s\rangle^2.
\]
\end{prop}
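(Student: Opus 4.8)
The plan is to exploit the tensor-product structure of the twisted transfer operator $\TO_{s,\varrho}$ in order to reduce the growth estimate for $L_\Gamma(s,\varrho)$ to the known growth estimate for the untwisted Fredholm determinant $\det(1-\TO_s)=L_\Gamma(s,\trivrep)=Z_\Gamma(s)$. For a Schottky group, the transfer operator acts on a Bergman-type space of holomorphic functions on a union of disks indexed by the generators, and the twist by a finite-dimensional unitary representation $\varrho\colon\Gamma\to U(V)$ has the effect of replacing the scalar weights $e^{-s\ell}$-type coefficients by the matrices $\varrho(\gamma)$; concretely, $\TO_{s,\varrho}$ acts on $H \otimes V$ (where $H$ is the untwisted Bergman space) as a sum of terms of the form $\TO_{s,\text{(dynamical part)}} \otimes \varrho(g)$, with the $\varrho(g)$ unitary. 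First I would make this separation precise, following the construction referenced in Section~\ref{sec:proof1}, so that the singular values of $\TO_{s,\varrho}$ are controlled by those of the untwisted operator: because $\varrho$ is unitary, tensoring by $\varrho(g)$ does not enlarge operator norms, and one obtains that the $n$-th singular value $\mu_n(\TO_{s,\varrho})$ is bounded, up to reindexing, by $\mu_{\lfloor n/\dim\varrho\rfloor}(\TO_s)$ — i.e. each singular value of $\TO_s$ is reused at most $\dim\varrho$ times.

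Next I would invoke the standard Weyl/Fredholm-determinant inequality
\[
 \log\big|\det(1-\TO_{s,\varrho})\big| \leq \sum_{n\geq 1}\log\big(1+\mu_n(\TO_{s,\varrho})\big)
\]
together with the known bound on the singular values of $\TO_s$ for Schottky surfaces: there are constants $A,B>0$ (depending only on $\Gamma$) such that $\mu_n(\TO_s)\leq A\,e^{B\langle s\rangle}\,e^{-c n}$, or more precisely the bound of the form that yields $\#\{n : \mu_n(\TO_s)\geq 1\}=O(\langle s\rangle)$ and $\sum_{n}\log^+\mu_n(\TO_s)=O(\langle s\rangle^2)$ — this is exactly the input that produces the order-$2$ growth $\log|Z_\Gamma(s)|=O(\langle s\rangle^2)$ in the untwisted case (Guillopé–Lin–Zworski). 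Feeding the reindexed singular values into the sum, the factor of $\dim\varrho$ multiplicity multiplies the whole estimate by $\dim\varrho$, giving $\log|L_\Gamma(s,\varrho)|\leq C\cdot\dim\varrho\cdot\langle s\rangle^2$ with $C$ independent of $\varrho$.

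The main obstacle, and the step requiring the most care, is making the tensor-decomposition of $\TO_{s,\varrho}$ rigorous and extracting from it a clean majorization of singular values that is genuinely uniform in $\varrho$ — in particular verifying that the "dynamical part" of the operator is literally the same (independent of $\varrho$) and that unitarity of $\varrho$ translates into the bound $\mu_{n\dim\varrho + j}(\TO_{s,\varrho})\leq \mu_n(\TO_s)$ for $0\le j<\dim\varrho$, rather than something weaker. Once that combinatorial bookkeeping on singular values is in place, the passage to the determinant via the inequality $|\det(1-L)|\le\prod(1+\mu_n(L))$ and the reuse of the untwisted singular-value asymptotics is routine. I would also note that the estimate is stated for all $s\in\CC$, so one must ensure the singular-value bound for $\TO_s$ holds (with the claimed $\langle s\rangle$-dependence) on all of $\CC$ and not merely in a half-plane; this is part of the established machinery for Schottky transfer operators, so I would simply cite it.
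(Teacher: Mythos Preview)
Your proposal is correct and follows essentially the same route as the paper. The ``main obstacle'' you flag---the uniform majorization of singular values---is resolved in the paper by the observation that $\TO_{s,\varrho}=\TO_{s,\trivrep_V}\circ U$ where $U=\bigoplus_{j=1}^{2m}\varrho(\gamma_j)$ is a block-diagonal unitary on $\mc H$ (so $|\TO_{s,\varrho}|$ and $|\TO_{s,\trivrep_V}|$ are unitarily conjugate), combined with $\TO_{s,\trivrep_V}=\TO_{s,\trivrep_\CC}\otimes I_V$; this gives the exact equality $\det(1+|\TO_{s,\varrho}|)=\det(1+|\TO_{s,\trivrep_\CC}|)^{\dim\varrho}$, which is precisely your desired singular-value repetition with multiplicity $\dim\varrho$, and the remainder of the argument is as you outline.
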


In Section~\ref{sec:prelims} below we introduce the necessary background knowledge on Schottky surfaces and the transfer operators. Sections~\ref{sec:proofprop} and \ref{sec:proof1} are devoted to the proofs of Proposition~\ref{prop:twist_growth} and Theorem~\ref{thm:counting}, respectively. In Section~\ref{sec:proof2} we provide a proof of Theorem~\ref{thm:box}. The final two Sections~\ref{sec:eigen} and \ref{sec:cayley} discuss examples for \eqref{limitseq} and a relation of Theorem~\ref{thm:box} to Cayley graphs, respectively.

\subsubsection*{Acknowledgement} The authors are grateful to Fr\'ed\'eric Naud for helpful discussions on various aspects of this work. Further, AP acknowledges support by the DFG grant PO 1483/2-1.

\section{Preliminaries and Notation}\label{sec:prelims}

\subsection{Hyperbolic surfaces} 
Throughout we use the upper half plane model of the hyperbolic plane
$$ \hh = \{ z = x+iy : x\in \mathbb{R}, y >0\}, \quad
ds^{2} = \frac{dx^{2}+dy^{2}}{y^{2}}.
$$
In these coordinates, the associated (positive) Laplace--Beltrami operator is 
\[
 \Delta_\hh = -y^2\big(\partial_x^2 + \partial_y^2\big).
\]
The group of orientation-preserving isometries of $ \hh $ is isomorphic to the group $ \mathrm{PSL}_{2}(\mathbb{R}) = \mathrm{SL}_{2}(\mathbb{R})/ \{ \pm \id \} $, acting by M\"{o}bius transformations on $ \hh $. This action extends continuously to the geodesic boundary $ \partial \hh $ of $ \hh $, which we identify with $ \overline{\mathbb{R}}:= \mathbb{R}\cup \{ \infty \} $. The action of $ \mathrm{PSL}_{2}(\mathbb{R}) $ on the geodesic closure $ \overline{\hh} = \hh \cup \partial \hh $ is then given by
$$
g. z := \begin{cases}  \infty &\text{ if } z=\infty,\; c = 0, \text{ or } z\neq \infty,\, cz+d=0, \\[+2mm] \dfrac{a}{c} &\text{ if } z=\infty,\; c \neq  0,\\[+2mm] \dfrac{az+b}{cz+d} &\text{ otherwise}   \end{cases}
$$ 
for $ g = \textbmat{a}{b}{c}{d} \in \mathrm{PSL}_{2}(\mathbb{R}) $ and $ z\in \overline{\hh} $.

Every hyperbolic surface $X$ is isometric to a quotient $ \Gamma\backslash\hh $ for some Fuchsian group $ \Gamma$, that is, a discrete subgroup of $\PSL_{2}(\mathbb{R})$. 

We recall that an element $ g\in \mathrm{PSL}_{2}(\mathbb{R}) $, $g\not=\id$, is called \textit{hyperbolic} if it has precisely two fixed points $ z_{1}, z_{2} $ on $ \partial \hh $ (equivalently, $|\Tr g|>2$).  It is called \textit{elliptic} if it has a single fixed point in $\hh$ (equivalently, $|\Tr g|<2$), and it is called \textit{parabolic} if it has a single fixed point in $\partial\hh$ (equivalently, $|\Tr g|=2$). 

Let $\Gamma$ be a Fuchsian group. An element $g\in\Gamma$, $g\not=\id$, is called \textit{primitive} if for every $h\in\Gamma$ and $n\in\N$, $g=h^n$ implies $n=1$. We let $\overline{\Gamma}_p$ denote the set of $\Gamma$-conjugacy classes of the primitive hyperbolic elements in $\Gamma$, and $\overline\Gamma$ the set of $\Gamma$-conjugacy classes of all hyperbolic elements in $\Gamma$. Throughout we take advantage of the well-known bijection between $\overline{\Gamma}_p$ and the set of primitive periodic geodesics on $ X = \Gamma\setminus \hh $ as well as the bijection between $\overline{\Gamma}$ and the set of all periodic geodesics on $X$ (allowing multiple passages through the image).  If $\gamma$ is a periodic geodesic on $X$ and $[g]$ the corresponding class in $\overline\Gamma$ then the \textit{length} of $\gamma$ is given by 
\[
 \ell(\gamma) = \ell(g) = \log N_G(g),
\]
where 
\begin{equation}\label{defN}
 N_G(g) \sceq \max\{ |\lambda|^2 : \text{$\lambda$ eigenvalue of $g$} \}
\end{equation}
denotes the norm of $g$. We denote the length of the shortest geodesic on $X$ by $\ell_0(X)$, hence
\begin{equation}\label{mingeod}
\ell_0(X) = \min_{\overline g\in\overline\Gamma} \ell(g).
\end{equation}

\subsection{Schottky groups and Schottky surfaces}\label{sec:Schottky}

Throughout we restrict all considerations to Schottky surfaces and (Fuchsian) Schottky groups.  (Fuchsian) \textit{Schottky groups} are precisely those Fuchsian groups that are geometrically finite, not cofinite and have no elliptic and parabolic elements. We call a hyperbolic surface $X$ \textit{Schottky} if there exists a Schottky group $\Gamma$ such that $X=\Gamma\backslash\hh$. The class of Schottky surfaces coincides with the class of convex cocompact hyperbolic surfaces of infinite area (and without singularities).

Classically, Schottky groups are given by the following geometric construction of which we take advantage in this article as well:

Let $m\in\N$ and choose $2m$ open Euclidean disks in $\CC$ that are centered on $\partial\hh $ and have mutually disjoint closures.  Endow these disks with an ordering, say
\begin{equation}\label{disks}
  \mathcal{D}_{1}, \dots, \mathcal{D}_{2m}.
\end{equation}
Note that the action of $\PSL_2(\RR)$ on $\overline{\hh}$ by M\"obius transformations extends continuously to the whole Riemann sphere $\overline{\CC}$. For $j\in \{ 1, \dots, m\}$ let $\gamma_{j}$ be an element in $\PSL_2(\RR)$ that maps the exterior of $ \mathcal{D}_{j} $ to the interior of $ \mathcal{D}_{j+m}$. Then the elements $ \gamma_{1}, \dots, \gamma_{m} $ and its inverses freely generate a Schottky group (as a group). 

Conversely, every Schottky group $\Gamma$ is conjugate within $\PSL_2(\RR)$ to a Schottky group arising from this construction for some $m\in\N$, a family of open disjoint disks $(\mathcal D_j)_{j=1}^{2m}$ and a family of elements $(\gamma_j)_{j=1}^m$ in $\PSL_2(\RR)$ with the mapping properties as above, with all objects depending on $\Gamma$ (more precisely, on the choice of conjugate) and are subject to choices. To be more precise, if one uses this construction with disks in $\overline{\CC}$ (as it is classically done) then every Schottky group is given by this construction. However, for our applications, we will use only Schottky groups for which all disks in \eqref{disks} are indeed contained in $\CC$. This restriction does not restrict our results, since we are mostly interested in the investigation of Schottky surfaces $X=\Gamma\backslash\hh$ (in which case we are free to choose $\Gamma$ such that no such conjugation is needed), and the remaining results are invariant under  conjugations.

For convenience, given a Schottky group $\Gamma$, we omit throughout any reference to a possibly necessary conjugation. We use $m\in\N$, fix a choice of the disks in \eqref{disks} and the generators $\gamma_1,\ldots, \gamma_m$ from a (fixed) geometric construction of $\Gamma$ without any further reference. Moreover, for $j\in \{m+1,\ldots, 2m\}$, we set 
\[
 \gamma_j \sceq \gamma_{j-m}^{-1},
\]
and extend this definition cyclically to $\Z$ by definining 
\[
 \gamma_k \sceq \gamma_{k\text{ mod }2m}\quad \text{for $k\in\Z$.}
\]
Further, we let
\begin{equation}\label{def_D}
\mc D\sceq \bigcup_{j=1}^{2m} \mc D_j
\end{equation}
be the union of the disks \eqref{disks} used in the construction.

\subsection{Resonances}

We recall the definition and properties of resonances of the Laplacian for Schottky surfaces only. Let $X=\Gamma\backslash\hh$ be a Schottky surface, let $\Delta_X$ denote the Laplacian on $X$, and let 
\[
 \delta \sceq \delta(X) \sceq \dim\Lambda(\Gamma)
\]
denote the Hausdorff dimension of the limit set $\Lambda(\Gamma)$ of $\Gamma$. 

The spectrum of $\Delta_X$ on $L^2(X)$ is rather sparse (much in contrast to the resonance set, defined further below). By Lax-Phillips \cite{Lax_Phillips_I} and Patterson \cite{Patterson}, it satisfies the following properties:
\begin{itemize}
\item The (absolutely) continuous spectrum is $ [1/4, \infty) $.
\item The pure point spectrum is finite and contained in $(0,1/4)$. In particular, there are no eigenvalues embedded in the continuous spectrum.
\item If $\delta<1/2$ then the pure point spectrum is empty. If $\delta>1/2$ then $\delta(1-\delta)$ is the smallest eigenvalue.
\end{itemize}
The resolvent 
\[
 R_X(s) \sceq \big( \Delta_X - s(1-s)\big)^{-1}\colon L^2(X) \to H^2(X)
\]
of $\Delta_X$ is defined for $s\in\CC$ with $\Rea s>1/2$ and $s(1-s)$ not being an $L^2$-eigenvalue of $\Delta_X$. By \cite{Mazzeo_Melrose, GZ_upper_bounds}, it extends to a meromorphic family 
\[
 R_X(s) \colon L^2_{\text{comp}}(X) \to H^2_{\text{loc}}(X)
\]
on $\CC$ with poles of finite rank. The \textit{resonances} of $X$ are the poles of $R_X$. We denote the set of resonances, repeated according to multiplicities, by 
\[
 \mc R(X).
\]
The set $\mc R(X)$ of resonances is contained in the half space
\[
 \{ s \in\CC : \Rea s \leq \delta\},
\]
and, obviously, each $L^2$-eigenvalue gives rise to a (pair of) resonance(s). We refer to the Introduction for some known results on the finer structure of the set $\mc R(X)$, and recall that we are interested in the study of the two resonance counting functions
\[
 N_X(r) \sceq \#\{ s\in \mc R(X) \colon |s|\leq r\},\quad r>0,
\]
and
\[
 M_X(\sigma, T) \sceq \#\left\{ s\in \mc R(X) : \Rea s \geq \sigma,\ |\Ima s - T | \leq 1\right\},\quad \sigma, T\in\RR.
\]

\subsection{Representation} 

For any Fuchsian group $\Gamma$ and any finite-dimensional representation $\varrho\colon \Gamma\to U(V)$ of $\Gamma$ on a finite-dimensional unitary space $V$ we denote throughout the inner product on $V$ by $\langle\cdot,\cdot\rangle$ and its associated norm by $\|\cdot\|$, without any further reference to $V$. We define the \textit{dimension} of $\varrho$ to be the dimension of $V$:
\[
 \dim\varrho \sceq \dim V.
\]
Further, we denote by $\trivrep_V$ the trivial representation of $\Gamma$ on $V$ if $\Gamma$ is understood implicitly, and by $\trivrep_{\Gamma}$ the trivial representation of $\Gamma$ on $V$ if $V$ is understood implicitly.

\subsection{Selberg zeta functions and $L$-functions}\label{sec:SZF}

Let $X=\Gamma\backslash\hh$ be a Schottky surface and $\varrho\colon \Gamma\to U(V)$ a unitary representation of $\Gamma$ on a finite-dimensional unitary space $V$. 

The $L$-function (twisted Selberg zeta function) associated to $(\Gamma,\varrho)$ is (initially only formally) determined by the Euler product (recall $N_G(\gamma)$ from \eqref{defN})
\begin{align}\label{Euler_product}
L_{\Gamma}(s, \varrho) & = \prod_{\overline{\gamma}\in \overline{\Gamma}_{p}} \prod_{k=0}^{\infty} \det\left(  1-\varrho(\gamma)e^{-(s+k)\ell(\gamma)} \right)
\\
& = \prod_{\overline{\gamma}\in \overline{\Gamma}_{p}} \prod_{k=0}^{\infty} \det\left(  1-\varrho(\gamma)N_G(\gamma)^{-(s+k)} \right) \nonumber
\end{align}
The infinite product \eqref{Euler_product} converges compactly on $\{ s\in\CC : \Rea s > \delta(X)\}$, and it has an analytic continuation to all of $\CC$ (see, e.\,g., \cite{FP_szf, JNS} and the additional comments in Section~\ref{sec:TO} below).

For the one-dimensional trivial representation $\trivrep_{\Gamma}$, the $L$-function is the classical Selberg zeta function $Z_{\Gamma}$ of $\Gamma$ (or of $X$), and \eqref{Euler_product} is the Euler product of $Z_\Gamma$:
\[
 Z_\Gamma(s) = L_{\Gamma}(s, \trivrep_{\Gamma}) = \prod_{\overline{\gamma}\in \overline{\Gamma}_{p}} \prod_{k=0}^{\infty} \left(  1-e^{-(s+k)\ell(\gamma)} \right).
\]

By~\cite{Patterson_Perry} (see also~\cite{BJP}) the set $\mc R(X)$ of resonances of $X$ is contained in the set of zeros of $Z_\Gamma$, counted with multiplicities. This property of the Selberg zeta function allows us to translate upper estimates of the resonance counting functions $N_X$ and $M_X$ into counting problems of the number of zeros of $Z_\Gamma$ in certain domains.

\subsection{Transfer operators}\label{sec:TO}

The thermodynamic formalism for Selberg-type zeta functions allows to represent the Selberg zeta function and the $L$-functions considered in this article as Fredholm determinants of well chosen transfer operators. The transfer operators used here derive from a certain discretization of the geodesic flow on the considered Schottky surface. We refer to \cite{Patterson_Perry, Guillope_Lin_Zworski, Borthwick_book} for details regarding the representation of the Selberg zeta function, and to \cite{FP_szf, JNS} for the extension to twisted transfer operators and $L$-functions, and remain here rather brief.

Let $\Gamma$ be a Schottky group, let $(\mc D_j)_{j=1}^{2m}$ be the family of open disks in $\CC$, and $(\gamma_j)_{j=1}^{2m}$ the family of elements in $\PSL_2(\RR)$ used in a (fixed) geometric construction of $\Gamma$ (see Section~\ref{sec:Schottky}), and recall that 
\[
 \Gamma = \langle \gamma_1^{\pm 1},\ldots, \gamma_m^{\pm1}\rangle
\]
is freely presented as a group (thus, the only (omitted) relations are of the form $\gamma\gamma^{-1}=\id$). Set 
\[
\mc D \sceq \bigcup_{j=1}^{2m} \mc D_j.
\]
Let $\varrho\colon\Gamma\to U(V)$ be a finite-dimensional unitary representation of $\Gamma$. The transfer operator $\TO_{s,\varrho}$ with parameter $s\in\CC$ associated to $(\Gamma,\varrho)$ is (initially only formally) given by
\begin{equation}\label{TO_def}
 \TO_{s,\varrho} \sceq \sum_{j=1}^{2m} 1_{\mc D_j} \sum_{\substack{i=1 \\ i\not=j+m}}^{2m} \nu_s(\gamma_i),
\end{equation}
where $1_{\mc D_j}$ denotes the characteristic function of $\mc D_j$, and for $g\in \PSL_2(\RR)$, $U\subseteq\CC$, $f\colon U\to\CC$ we set
\begin{equation}\label{nu}
 \nu_s(g^{-1})f(z) \sceq \big(g'(z)\big)^s \varrho(g^{-1}) f(g.z), 
\end{equation}
whenever this is well-defined. For the complex powers in \eqref{TO_def} we use the standard complex logarithm on $\CC\smallsetminus\RR_{\leq 0}$ (principal arc). A straightforward calculation shows well-definedness on $\mc D$ (which is what is needed in this article).

Depending on the application, the transfer operator $\TO_{s,\varrho}$ is considered to act on different spaces of functions defined on (subsets of) $\mc D$. For the proofs of Proposition~\ref{prop:twist_growth} and Theorem~\ref{thm:counting} we consider $\TO_{s,\varrho}$ to act on the Hilbert space defined in the following. For the proof of Theorem~\ref{thm:box} we use a `refinement' presented in Section~\ref{sec:proof2} below.

In order to define the Hilbert space on which we consider the transfer operator $\TO_{s,\varrho}$ as an actual operator, we let, for each $j\in\{1,\ldots, 2m\}$,  
\[
 \mc H_j \sceq H^2(\mc D_j) \sceq \left\{ \text{$f\colon \mc D_j\to V$ holomorphic} \ \left\vert\ \int_{\mc D_j} \|f\|^2\dvol < \infty \right.\right\}
\]
denote the space of holomorphic square-integrable $V$-(vector-)valued functions on $\mc D_j$. The volume form used in the definition is the standard Lebesgue measure on $\CC$. Endowed with the inner product
\[
 \langle f, g\rangle \sceq \int_{\mc D_j} \langle f(z), g(z)\rangle_{V}\dvol(z),
\]
the space $\mc H_j$ is a Hilbert space, the \textit{Hilbert Bergman space} on $\mc D_j$. Let
\[
 \mc H\sceq \bigoplus_{j=1}^{2m} \mc H_j
\]
denote the direct sum of the Hilbert spaces $\mc H_j$, $j=1,\ldots, 2m$.  As usual, we identify tacitly functions  
\[
f\in\mc H, \quad f=\bigoplus_{j=1}^{2m} f_j \qquad (f_j\in \mc H_j)
\]
with functions on $\mc D$. 

Note that for all $i,j\in\{1,\ldots, 2m\}$, $ i\neq j+m \mod 2m$, we have $\overline{\gamma_{i}^{-1}(\mathcal D_{j})} \subset \mathcal{D}_{i}$. Hence $ \gamma_{i}^{-1} \colon \mathcal{D}_{j}\to \mathcal{D}_{i} $ is a holomorphic contraction, the transfer operator $\TO_{s,\varrho}$ is well-defined as an operator 
\[
 \TO_{s,\varrho}\colon \mc H \to \mc H,
\]
and as such it is compact and of trace class. 

A property crucial for our application is that the Fredholm determinant of the transfer operators $\TO_{s,\varrho}$ represents the $L$-function. For all $s\in\CC$ we have
\begin{equation}\label{FredholIdentity}
L_{\Gamma}(s, \varrho) = \det(1-\mathcal{L}_{s,\varrho}).
\end{equation}
For a proof see, e.\,g., \cite{Guillope_Lin_Zworski}, for the trivial representation, and \cite{FP_szf} (arbitrary geometrically finite hyperbolic surfaces) or \cite{JNS} (specialized on Schottky surfaces) for all representations (which also includes a transfer-operator-based proof of the meromorphic continuability of $L_\Gamma(\cdot,\varrho)$).

\subsection{Some elements of functional analysis}

We recall a few elements of functional analysis that are used throughout this article. For proofs and more details we refer to \cite{Simon} or any other standard reference.

Let $H$ be a separable Hilbert space, and let $A\colon H\to H$ be an operator on $H$ of trace class. We note that some parts of this section apply to operators more general than trace class. However, such generalizations are not needed for our purposes.

Let $A^*$ denote the adjoint of $A$. Then $A^*A$ is positive semi-definite, and hence the \textit{absolute value}  
\[
 |A| \sceq \big(A^*A\big)^\frac12.
\]
of $A$ exists. The \textit{singular values} of $A$ are the non-zero eigenvalues of $|A|$. Let $(\mu_k(A))_{k=1}^{S(A)}$ be the sequence of singular values (with multiplicities) of $A$, arranged by decreasing order:
\[
 \mu_1(A) \geq \mu_2(A) \geq \mu_3(A) \geq \cdots 
\]
If necessary then we turn this sequence into an infinite one by filling it up with zeros at the end. The trace norm of $A$ is
\[
 \|A\|_1 \sceq \sum_{j=1}^\infty \mu_j(A).
\]
Further, let $(\lambda_j(A))_{j=1}^{E(A)}$ be the sequence of eigenvalues (with multiplicities) of $A$, arranged by decreasing absolute value:
\[
 |\lambda_1(A)|\geq |\lambda_2(A)|\geq \cdots.
\]
Then 
\begin{equation}\label{def_det}
 \det(1+A)  = \prod_{j=1}^{E(A)}(1+\lambda_j(A)).
\end{equation}
By the Weyl inequality we have for each $N\in\N$, 
\[
 \prod_{j=1}^N \big(1+|\lambda_j(A)|\big) \leq \prod_{j=1}^N \big(1 + \mu_j(A)\big).
\]
In particular, 
\begin{equation}\label{detsing}
 \big| \det(1+A)\big| = \prod_{j=1}^\infty \big(1 + \lambda_j(A)\big) \leq \prod_{j=1}^\infty \big(1 + \mu_j(A)\big) = \det\big(1 + |A|\big).
\end{equation}
If $\|A\|_1<1$ then 
\[
 \det(1+A) = \exp\left(\sum_{n=1}^\infty \frac{(-1)^{n+1}}{n}\Tr A^n\right).
\]
Applied to the transfer operators $\TO_{s,\varrho}$ as in Section~\ref{sec:TO} we have, for $\Rea s >\delta$, the identity
\[
 \det\left(1 - \TO_{s,\varrho}\right) = \exp\left( - \sum_{n=1}^\infty \frac{1}{n} \Tr\TO_{s,\varrho}^n\right).
\]

\subsection{Further notation} For $ z\in \mathbb{C} $ we set $ \langle z\rangle:= \sqrt{1+\vert z\vert^{2}}$. For any $z\in\CC$ and $R>0$ we let $B(z;R)$ denote the open Euclidean ball in $\CC$ with center $z$ and radius $R$, and we let $\overline{B}(z;R)$ denote its closure. We use the standard conventions of the $O$-notation and for $\ll$, $\gg$, and $\asymp$. In particular, we write $ f(x)\ll g(x) $ ($ f(x)\gg g(x) $) if there exists a constant $ C>0 $ such that $ \vert f(x)\vert\leq C \vert g(x)\vert $ ($ \vert f(x)\vert\geq C \vert g(x)\vert $) for all considered $x$. Further, $ f(x)\asymp g(x) $ if $ f(x)\ll g(x) $ and $ f(x)\gg g(x) $.

\section{Proof of Proposition~\ref{prop:twist_growth}}\label{sec:proofprop}

In this section we provide a proof of Proposition~\ref{prop:twist_growth}. In case that the representation considered in Proposition~\ref{prop:twist_growth} is the trivial character $\trivrep_\CC$, Proposition~\ref{prop:twist_growth} is identical to \cite[Proposition~3.2]{Guillope_Lin_Zworski}. For the proof of Proposition~\ref{prop:twist_growth} for general finite-dimensional unitary representations $\varrho$ we combine the proof by Guillop\'e--Lin--Zworski of \cite[Proposition~3.2]{Guillope_Lin_Zworski} with the observation that the contributions of the representation $\varrho$ in the transfer operator and the dynamical parts can be separated, which allows us to reduce the study to the transfer operator with the trivial character. We carefully show that all necessary estimates for the proof of Proposition~\ref{prop:twist_growth} are indeed uniform for all considered finite-dimensional unitary representations.

Throughout this section let $\Gamma$ be a Schottky group. We use the notation from Section~\ref{sec:prelims}. In particular, we let $\mc D_1,\ldots, \mc D_{2m}$ denote the open disks in $\CC$ and $\gamma_1,\ldots, \gamma_{2m}$ the generators (already including the inverses) of $\Gamma$ used in the geometric construction of $\Gamma$, and we use the Hilbert Bergman space from Section~\ref{sec:TO}.

\begin{proof}[Proof of Proposition~\ref{prop:twist_growth}]
Let $V$ be a finite-dimensional unitary space, $\varrho\colon\Gamma\to U(V)$ a unitary representation of $\Gamma$, and let $\TO_{s,\varrho}$ denote the transfer operator associated to $(\Gamma,\varrho)$ (see \eqref{TO_def}). We consider $\TO_{s,\varrho}$ as an operator of the Hilbert Bergman space defined in Section~\ref{sec:TO}. Recall from \eqref{FredholIdentity} that 
\[
 L_\Gamma(s,\varrho) = \det\big(1-\TO_{s,\varrho}\big).
\]
For all $s\in\CC$ the Weyl inequality (see \eqref{detsing}) implies that 
\[
|L_\Gamma(s,\varrho)| = |\det\big(1-\TO_{s,\varrho}\big)| \leq \det(1 + |\TO_{s,\varrho}|).
\]
In the following we estimate the right hand side further from above. To that end we consider the operator
\[
 U\sceq \bigoplus_{j=1}^{2m} \varrho(\gamma_j)
\]
which acts on $\mc H$ by 
\[
 Uf= \bigoplus_{j=1}^{2m}\varrho(\gamma_j)f_j
\]
for all $f=\bigoplus_{j=1}^{2m} f_j\in \mc H$. Then
\[
 \TO_{s,\varrho} = \TO_{s,\trivrep_V}\circ U.
\]
Since $U$ is obviously unitary, $U^* = U^{-1}$, and
\[
|\TO_{s,\varrho}| = U^{-1}\circ |\TO_{s,\trivrep_V}|\circ U.
\]
In turn (see \eqref{def_det})
\[
\det(1+|\TO_{s,\varrho}|) = \det(1+|\TO_{s,\trivrep_V}|).
\]
Let $I_V$ denote the identity operator on $V$. From $\TO_{s,\trivrep_V} = \TO_{s,\trivrep_\CC} \otimes I_V$ it follows that
\[
 |\TO_{s,\trivrep_V}| = |\TO_{s,\trivrep_\CC}|\otimes I_V.
\]
Thus, we have
\[
\det\big( 1 + |\TO_{s,\trivrep_V}|\big) = \det(1+|\TO_{s,\trivrep_\CC}|)^{\dim V}.
\]
By \eqref{def_det},
\[
 \det(1+|\TO_{s,\trivrep_\CC}|) = \prod_{k=1}^\infty (1+\mu_k(\TO_{s,\trivrep_\CC})).
\]
By \cite[Proof of Proposition~3.2]{Guillope_Lin_Zworski} there exist constants $c_1,c_2>0$ (only depending on $\Gamma$) such that for all $s\in\CC$ and all $k\in\N$ we have
\[
 \mu_k(\TO_{s,\trivrep_\CC}) \leq c_1e^{c_1|s|-c_2k}.
\]
Thus,
\[
 \prod_{k=1}^\infty (1+\mu_k(\TO_{s,\trivrep_\CC})) \leq \prod_{k=1}^\infty \left( 1 + c_1e^{c_1|s|-c_2k} \right)
\]
Let 
\[
 \ell(s)\sceq \left\lceil\frac1{c_2}\left( \log c_1 + c_1|s| \right)\right\rceil.
\]
Then 
\[
 \prod_{k=\ell(s)+1}^\infty \left( 1 + c_1e^{c_1|s|-c_2k} \right) \leq \prod_{m=1}^\infty \left( 1 + e^{-c_2m}\right),
\]
which is convergent and bounded independently of $s$.

Further (note that $e^{c_1|s|}\geq 1$ for the second inequality)
\begin{align*}
\prod_{k=1}^{\ell(s)} \left( 1 + c_1e^{c_1|s|-c_2k} \right) & \leq \left( 1 + c_1e^{c_1|s|}\right)^{\ell(s)}
\\
& \leq \left( c_3 e^{c_1|s|}\right)^{\ell(s)}
\\
& \leq \exp\left( c_3 + c_4 |s| + c_5 |s|^2\right)
\\
& \leq \exp\left( c_6 + c_7 |s|^2 \right)
\end{align*}
with appropriate constants $c_3,\ldots, c_7>0$ (only depending on $\Gamma$). Thus, there exists $c_8>0$ such that 
\[
 |L_\Gamma(s,\varrho)| \leq \left( c_8 e^{c_6 + c_7|s|^2} \right)^{\dim V}.
\]
It follows that
\[
 \log |L_\Gamma(s,\varrho)| \ll \dim V \cdot \langle s\rangle^2.
\]
This completes the proof of Proposition~\ref{prop:twist_growth}. 
\end{proof}

\section{Proof of Theorem~\ref{thm:counting}}\label{sec:proof1}

In this section we prove Theorem~\ref{thm:counting}. Throughout let $X$ be a Schottky surface and let $\Gamma$ be a Schottky group such that $X=\Gamma\backslash\hh$.

We first consider the case that $X$ is elementary, hence a hyperbolic cyclinder. Then $\Gamma$ 
is generated by a single hyperbolic element, say $ \Gamma = \langle \gamma\rangle$. In this case the resonances of $ X $ can be computed explicitly (see, e.\,g., \cite[Proposition~5.2]{Borthwick_book}). The counting function satisfies the asymptotic formula  
$$ N_{X}(r) \sim \frac{\ell(\gamma)}{2}r^{2}.$$

If $ \widetilde{X} $ is a cover of $ X $ of degree $ k $ then $ \widetilde{X} = \langle \gamma^{k} \rangle\setminus \mathbb{H}^{2} $. Hence
$$ N_{X}(r) \sim \frac{\ell(\gamma^{k})}{2}r^{2} = k\frac{\ell(\gamma)}{2}r^{2}, $$
which establishes an even stronger result than Theorem~\ref{thm:counting}.

We suppose from now on that $X$ is non-elementary. We first prove the upper bound stated in Theorem~\ref{thm:counting}\eqref{counti}. This proof is based on the following two key ingredients: Suppose that $\wt X = \wt\Gamma\backslash\hh$ is a finite cover of $X$, or equivalently, suppose that $\wt\Gamma\subseteq\Gamma$ is a subgroup of finite index. As briefly discussed in Section~\ref{sec:SZF}, the resonance counting function $N_{\wt X}(r)$ can be bounded from above by the number of zeros of the Selberg zeta function $Z_{\wt\Gamma}$ in $\{ |s|\leq r\}$. By the Venkov--Zograf factorization formula (which follows directly from \cite{Venkov_Zograf} in combination with the simplification from \cite[Theorem~7.2]{Venkov_book}, or as a special case of \cite[Theorem~6.1]{FP_szf}), the Selberg zeta function $Z_{\wt\Gamma}$ of $\wt X$ is identical to the $L$-function of $(\Gamma,\lambda)$, where   
\[
 \lambda = \Ind_{\wt\Gamma}^\Gamma \trivrep_{\wt\Gamma}
\]
is the representation of $\Gamma$ obtained from the induction of the one-dimensional trivial representation of $\wt\Gamma$. Thus
\[
 Z_{\wt\Gamma}(s) = L_{\Gamma}(s,\lambda).
\]
Proposition~\ref{prop:twist_growth} allows us to bound $L_{\Gamma}$ (and hence $Z_{\wt\Gamma}$) in terms of 
\[
 \dim\lambda = [\Gamma:\wt\Gamma] = \dcov(\wt X,X)
\]
and additional factors that are independent of $\wt\Gamma$. These estimates result in an upper bound for $N_{\wt X}(r)$.

The lower bound stated in Theorem~\ref{thm:counting}\eqref{countii} is then shown by using the upper bound in combination with the so-called Guillop\'e--Zworski argument \cite{GZ_scattering_asympt, GZ_Wave}. As for Proposition~\ref{prop:twist_growth} we carefully show that all constants are uniform for all finite covers of $X$.

Throughout we assume without loss of generality that the Schottky group $\Gamma$ is chosen such that the disks  \eqref{disks} used in the geometric construction of $\Gamma$ are contained in $\CC$. Moreover, for any finite cover $\wt X$ of $X$ we choose a representative $\wt\Gamma$ of its fundamental group such that $\wt\Gamma$ is a subgroup of $\Gamma$.

As a preparation we recall Titchmarsh's Number of Zeros Theorem \cite{Titchmarsh_book}, which is a consequence of Jensen's formula. 

\textit{Let $z_0\in\CC$, $T>0$ and let $f\colon \overline{B}(z_0;T)\to\CC$ be a function such that $f$ is bounded on $\overline{B}(z_0;T)$ by, say, $M\geq 0$, analytic on $B(z_0;T)$, and $f(z_0)\not=0$. Then, for all $\eta\in (0,1)$, the number of zeros (with multiplicities) of $f$ in $\overline{B}(z_0;\eta T)$ is at most
\[
 \frac{1}{\log\frac1\eta}\left( \log M - \log |f(z_0)|\right).
\]
}

Note that if $f$ extends analytically to a neighborhood of  $\overline{B}(z_0;T)$ then $M$ is attained at the boundary of $\overline{B}(z_0;T)$.

\begin{proof}[Proof of Theorem~\ref{thm:counting}\eqref{counti} (upper bound)]
All constants $ c_{n} $ with $ n\in \{ 1, 2, \dots \} $ that appear during the proof are positive and may depend on $ \Gamma $ (or equivalently, on $ X $). None of these constants depend on any finite cover of $X$.

Let $\wt X=\wt\Gamma\backslash\hh$ be a finite cover of $X$, let $\trivrep_{\wt\Gamma}\colon\wt\Gamma\to \mathbb{S}^1$ denote the trivial character of $\wt\Gamma$, and let
\[
\lambda := \Ind_{\wt\Gamma}^\Gamma \trivrep_{\wt \Gamma}
\]
denote its induction to a representation of $\Gamma$. Let $s\in\CC$. By the Venkov--Zograf factorization formula  \cite{Venkov_Zograf, Venkov_book, FP_szf} we have 
$$
 Z_{\wt\Gamma}(s) = L_{\wt\Gamma}(s,\trivrep_{\wt\Gamma}) = L_\Gamma(s,\lambda).
$$
Recall that
$$ \dim \lambda = [ \Gamma : \wt \Gamma ] = \dcov(\wt X,X). $$ 
From Proposition \ref{prop:twist_growth} it follows that
\begin{equation}
\log \left\vert Z_{\wt \Gamma}(s) \right\vert \leq c_{1} \dcov(\wt X,X) \langle s \rangle^{2}.
\end{equation}
In order to convert the growth estimate for $ Z_{\wt \Gamma} $ into an upper bound for the number of resonances, we note that
\begin{align*}
N_{\wt X}(r) &\leq \#\{ s\in\CC \colon |s|\leq r,\ Z_{\wt \Gamma}(s) = 0\}
\\
& \leq \# \{s\in\CC \colon |s-1|\leq r+1,\ Z_{\wt \Gamma}(s) = 0\}.
\end{align*}
Since $L_\Gamma(\cdot,\lambda) = Z_{\wt\Gamma}$ is analytic on all of $\CC$, and $ Z_{\wt\Gamma}(1) >0 $, Titchmarsh's Number of Zeros Theorem with $z_0=1$, $T=2(r+1)$ and $\eta=1/2$ yields
\begin{align*}
 N_{\wt X}(r) &\leq \frac{1}{\log 2} \left( \log \max \left\{ |Z_{\wt \Gamma}(s)| :  |s-1| =  2(r+1) \right\} - \log Z_{\wt \Gamma}(1)\right)\\
 &\leq  \frac{1}{\log 2} \left( c_{2} \dcov(\wt X,X) \langle 2(r+1) \rangle^2 - \log Z_{\wt \Gamma}(1)\right).
\end{align*}
Since $ r\geq 1 $, we have $ \langle 2(r+1) \rangle^2\ll \langle r \rangle^2 \ll r^2 $, and hence 
\begin{align*}
 N_{\wt X}(r) \leq c_{3} \left( \dcov(\wt X, X)r^{2} - \log Z_{\wt \Gamma}(1) \right).
\end{align*}
We claim that
\[
 Z_{\wt \Gamma}(1) \geq Z_{\Gamma}(1)^{\dim \lambda}.
\]
Indeed, since $ 1>\delta $, the expression of $L$-functions as Euler products applies and yields 
\begin{align*}
Z_{\wt \Gamma}(1) &= L_{\Gamma}(1,\lambda)\\
&= \prod_{\overline g\in\overline\Gamma_p} \prod_{k=0}^\infty \det\left(1-\lambda(g)N_G(g)^{-(1+k)}\right)\\
& \geq \prod_ {\overline g\in\overline\Gamma_p}\prod_{k=0}^\infty \left(1-N_G(g)^{-(1+k)}\right)^{\dim\lambda}\\
& = L_{\Gamma}(1,\trivrep)^{\dim\lambda}\\
&= Z_{\Gamma}(1)^{\dim \lambda}.
\end{align*}
Thus, 
\[
 -\log Z_{\wt \Gamma}(1) \leq  -\dim\lambda \cdot \log Z_{\Gamma}(1) = -\dcov(\wt X, X) \log Z_{\Gamma}(1)  
\]
Clearly, $ -\log Z_{\Gamma}(1) $ is a positive constant only depending on $\Gamma $. We conclude that there exists $ c_{4} > 0 $ such that
\[
 N_{\wt X}(r)  \leq c_{4} \dcov(\wt X,X) r^2.
\]
for all $ r\geq 1 $.
\end{proof}

Taking advantage of the already established upper bound for the resonance counting function, we can now prove the lower bound.

\begin{proof}[Proof of Theorem~\ref{thm:counting}\eqref{countii} (lower bound)]
As in the proof of Theorem \ref{thm:counting}\eqref{counti}, the constants $ c_{n} $ with $ n\in \{ 1, 2, \dots \} $ are all positive and may depend on $ X $, but are independent of any finite cover of $X$.

We take advantage of the following \emph{wave $ 0 $-trace formula} provided by Guillop\'e--Zworski \cite{GZ_Wave}: For any function $ \varphi\in C_{c}^{\infty} \big( (0, \infty)\big) $ let 
\[
\wh\varphi(z) \sceq \int_{-\infty}^\infty e^{-ix z}\varphi(x)\, dx.
\]
be its Fourier transform. Then, for any non-elementary Schottky surface $Y$ and all test functions $\varphi\in C^\infty_c\big( (0,\infty) \big)$ we have
\begin{align}\label{wave_trace}
 \sum_{s\in\mc R(Y)} \wh\varphi\left( i\left(s-\frac12\right) \right)&  = -\frac{\zvol(Y)}{4\pi} \int_{-\infty}^\infty \frac{\cosh\frac{t}2}{\sinh^2 \frac{t}2}\varphi(t)\,dt 
 \\
 & \qquad \qquad
 + \sum_{\ell\in\mc L(Y)}\sum_{k=1}^\infty \frac{\ell}{2\sinh\frac{k\ell}2}\varphi(k\ell), \nonumber
\end{align}
where $ \mc L(Y) $ is the \emph{primitive length spectrum} of $ Y $, that is, the set of lengths of the primitive periodic geodesics on $ Y $ (with multiplicities).

Let $\wt X$ be a finite cover of $X$. Pick $\varphi_1\in C^\infty_c\big( (0,\infty)\big)$ such that $\varphi_1$ is nonnegative and $\supp\varphi_1\subseteq (0,\ell_0(X))$. For $T\in\RR$, $T>0$, we define $\varphi_T\in C_c^\infty\big( (0,\infty) \big)$ by
\[
 \varphi_T(x)\sceq T\varphi_1(Tx).
\]
We want to apply the wave $ 0 $-trace formula to $ \widetilde{X} $ and $ \varphi_{T} $ with $ T\geq 1. $ Note that $\supp\varphi_T\subseteq (0,\ell_0(X)/T)$. 

Since $\ell_0(\wt X) \geq \ell_0(X)$, the sum on the right hand side of \eqref{wave_trace} vanishes for all $ T\geq 1 $:
\[
 \sum_{\ell\in\mc L(\wt X)} \sum_{k=1}^\infty \frac{\ell}{2\sinh\frac{k\ell}{2}}\varphi_T(k\ell) = 0.
\]
Thus
\begin{equation}\label{application_of_trace}
\left| \sum_{s\in\mc R(\wt X)} \wh\varphi_T\left( i\left(s-\frac12\right)\right)\right| = \frac{\zvol(\wt X)}{4\pi} \int_{-\infty}^\infty \frac{\cosh\frac{t}2}{\sinh^2\frac{t}2}\varphi_T(t)\,dt.
\end{equation}
In the following we estimate \eqref{application_of_trace} from above and below. For the lower bound we note that
$$
\int_{0}^{\infty} \frac{\cosh(t/2)}{\sinh(t/2)^{2}}\varphi_{T}(t) dt = \int_{0}^{\ell_{0}(X)} \frac{\cosh(t/2T)}{\sinh(t/2T)^{2}}\varphi_{1}(t) dt. 
$$
From $ \cosh(t/2T) \geq 1 $ and 
$$
\sinh(t/2T) = \sum_{k=1}^{\infty} \frac{(t/2T)^{2k+1}}{(2k+1)!} \leq \frac{1}{T} \sum_{k=1}^{\infty} \frac{(t/2)^{2k+1}}{(2k+1)!} = \frac{1}{T} \sinh(t/2)
$$
for all $ t > 0 $ (recall that $ T\geq 1 $) it follows that
$$
\int_{0}^{\infty} \frac{\cosh(t/2)}{\sinh(t/2)^{2}}\varphi_{T}(t) dt \geq T^{2} \int_{0}^{\ell_{0}(X)} \frac{1}{\sinh(t/2)^{2}}\varphi_{1}(t) dt. 
$$
Thus, \eqref{application_of_trace} can be bounded from below by 
\begin{equation}\label{lower_bound_trace}
\left\vert \sum_{\zeta \in \mathcal{R}(\widetilde{X})} \widehat{\varphi_{T}}\left( i(\zeta-\frac{1}{2})\right)\right\vert \geq c_{1} \text{0-vol}(\widetilde{X}) T^{2},
\end{equation}
with
$$
c_{1} := \frac{1}{4\pi} \int_{0}^{\ell_{0}(X)} \frac{1}{\sinh(t/2)^{2}}\varphi_{1}(t) dt \in (0, \infty).
$$

For an upper bound of \eqref{application_of_trace} we let $ r \geq 1 $, split the sum in the left hand side of \eqref{application_of_trace} at $r$, and estimate 
$$
 \left| \sum_{s\in\mc R(\wt X)} \wh\varphi_T\left( i\left(s-\frac12\right)\right)\right| \leq  \sum_{\substack{s\in\mc R(\wt X) \\ |s|\leq r}} \left| \wh\varphi_T\left( i\left(s-\frac12\right)\right)\right| + \sum_{\substack{s\in\mc R(\wt X)\\ |s|>r}}\left| \wh\varphi_T\left( i\left(s-\frac12\right)\right)\right|.
$$
We estimate both sums on the right hand side separately.

Since $ \varphi_{1}\in C_{c}^{\infty}\big( (0,\ell_{0}(X))\big) $, iterated integration by parts yields
\begin{equation}\label{PW_estimate}
 \left|\wh\varphi_T(z)\right| = \left| \wh\varphi_1\left(\frac{z}{T}\right)\right| \leq c \left( 1 + \left|\frac{z}{T}\right|\right)^{-3} \times
 \begin{cases}
  \exp\left(\frac{\ell_0(X)}{T}\Ima z\right) & \text{if $\Ima z\geq 0$}
  \\
  1 & \text{if $\Ima z\leq 0$,}
 \end{cases}
\end{equation}
for all $ z\in \mathbb{C} $ and $ T > 0 $, where $c>0$ is a constant depending only on $\ell_0(X)$ and the choice of $\varphi_1$.

Recall that for each resonance $s\in \mc R(\wt X)$ we have
\[
 \Im\left( i\left(s-\frac12\right)\right) = \Rea s - \frac12 \leq \delta -\frac12.
\]
From \eqref{PW_estimate} it follows that
\[
 \left| \wh\varphi_T\left( i\left(s-\frac12\right)\right)\right| \leq  c\left( 1 + \left|\frac{s-\frac12}{T}\right|\right)^{-3} \leq c_{2}.
\]
Thus,
\begin{equation}\label{two}
 \sum_{\substack{s\in \mc R(\wt X) \\ |s|\leq r}} \left| \wh\varphi_T\left( i\left(s-\frac12\right)\right)\right| \leq  c_{2} N_{\wt X}(r).
\end{equation}
Using \eqref{PW_estimate} again, we find
\begin{equation}\label{second_partial_sum}
 \sum_{\substack{s\in\mc R(\wt X) \\ |s|>r}}\left| \wh\varphi_T\left( i\left(s-\frac12\right)\right)\right| 
 \leq c \sum_{\substack{s\in\mc R(\wt X) \\ |s|>r}} \left( 1 + \left| \frac{s-1/2}{T} \right|\right)^{-3}.
\end{equation}
The sum on right hand side of \eqref{second_partial_sum} can be bounded by a Stieltjes integral as follows:
\begin{align*}
 \sum_{\substack{s\in\mc R(\wt X) \\ |s|>r}} \left( 1 + \left| \frac{s-1/2}{T} \right|\right)^{-3} & \leq \sum_{\substack{s\in\mc R(\wt X) \\ |s|>r}} \left( 1 - \frac{1}{2T} + \left| \frac{s}{T} \right|\right)^{-3}
 \\
 & \leq T^3\sum_{\substack{ s\in\mc R(\wt X) \\ |s|>r}} |s|^{-3}
 \\
 & \leq T^3\int_r^\infty \frac{1}{t^3} \, dN_{\wt X}(t).
\end{align*}
Note that the integral converges, since $ N_{\widetilde{X}}(t) = O(t^{2}) $ as $ t\to \infty $.

By Theorem~\ref{thm:counting}\eqref{counti} (which is already proven above) there exists $ C > 0 $ (independent of $ \wt X $) such that $ N_{\wt X}(r) \leq C\zvol(\wt X) r^2 $ for all $ r\geq 1 $. (Here we use the relation $ \zvol(\wt X) = \dcov(\wt X, X) \zvol(X) $.) It follows that
\begin{align*}
\int_r^\infty \frac{1}{t^3} \, dN_{\wt X}(t) & = \lim_{R\to\infty} R^{-3} N_{\wt X}(R) - r^{-3} N_{\wt X}(r) + 3\int_r^\infty \frac{N_{\wt X}(t)}{t^4}\, dt
\\
& \leq r^{-3} N_{\wt X}(r) + 3C\zvol(\wt X) \int_r^\infty \frac{dt}{t^2}
\\
& \leq 4C\zvol(\wt X) r^{-1}.
\end{align*}
Thus, we have established  
\begin{equation}\label{three}
\sum_{\substack{s\in\mc R(\wt X) \\ |s|>r}}\left| \wh\varphi_T\left( i\left(s-\frac12\right)\right)\right| 
 \leq c_{3} \zvol(\wt X) T^{3} r^{-1}
\end{equation}
for all $ r\geq 1 $ and $ T \geq 1 $, where $ c_{3} := 4C \cdot c. $

Gathering \eqref{application_of_trace}, \eqref{two} and \eqref{three} leads to the inequality
\begin{equation}\label{low_total}
 c_1 \zvol(\wt X) T^2 \leq c_2 N_{\wt X}(r) + c_3 \zvol(\wt X) T^3 r^{-1},
\end{equation}
which is valid for all $ r \geq 1 $ and $ T \geq 1 $.

Finally set $ a := (2c_{3})^{-1}c_{1} > 0 $ and $ r_{0} := \max \{1, a^{-1}\} $, and notice that these constants only depend on $ X $. For all $ r \geq r_{0} $ we apply \eqref{low_total} with $ T := a r \geq 1 $ to obtain
$$
N_{\wt X}(r) \geq c_{4} \zvol(\wt X) r^{2},
$$
where
$$
c_{4} := \frac{c_{1}a^{2}-c_{3}a^{3}}{c_{2}} = \frac{c_{1}^{3}}{8 c_{2}c_{3}^{2}}.
$$
This completes the proof of Theorem \ref{thm:counting}\eqref{countii}. 
\end{proof}

\section{Proof of Theorem~\ref{thm:box}}\label{sec:proof2}

In this section we provide a proof of Theorem~\ref{thm:box}. This proof follows a route similar to the one taken by Jakobson and Naud for the proof of \cite[Theorem~1.3]{JN}. The main novelties here are an intensive exploitation of twisted transfer operators and a rather detailed study of the fine structure of powers of the transfer operators.

Throughout this section let 
\begin{equation}\label{space}
X=\Gamma\backslash\hh
\end{equation}
be a fixed Schottky surface, and $\delta=\delta(X) = \dim\Lambda(\Gamma)$ the Hausdorff dimension of the limit set of $\Gamma$.

For any finite cover $\wt X = \wt\Gamma\backslash\hh$ we can estimate the number $M_{\wt X}(\sigma, T)$ of resonances of $\wt X$ in the box
\[
 R(\sigma,T)\sceq [\sigma,\delta] + i[T-1,T+1]
\]
by counting the number of zeros of the Selberg zeta function $Z_{\wt\Gamma}$ in $R$, and we can use the identities
\begin{equation}\label{shifting}
 Z_{\wt\Gamma}(s) = L_\Gamma(s,\lambda) = \det\big(1 - \TO_{s,\lambda}\big),
\end{equation}
where $\lambda=\Ind_{\wt\Gamma}^\Gamma \trivrep_{\wt\Gamma}$ is the induction of the trivial character of $\wt\Gamma$ to $\Gamma$, and $\TO_{s,\lambda}$ is the transfer operator associated to $\Gamma$ twisted with $\lambda$. These identities allow us to transfer the counting problem to the transfer operator and to get bounding constants uniform in $\wt X$. 

However, to get better bounds, instead of using $\TO_{s,\lambda}$ we will use a suitable power of this transfer operator. We note that if $1$ is an eigenvalue of $\TO_{s,\lambda}$ of algebraic multiplicity $m$ then, for any $N\in\N$, the value $1$ is an eigenvalue of $\TO_{s,\lambda}^N$ of algebraic multiplicity at least $m$. Thus, in any subset $M$ of $\CC$, the number of zeros $s$ in $M$ of $\det(1-\TO_{s,\lambda})$ is bounded above by the number of zeros in $M$ of $\det(1-\TO_{s,\lambda}^N)$. In particular, for any $N\in\N$, 
\[
 M_{\wt X}(\sigma, T) \leq \#\{ s\in R(\sigma, T) \colon \det(1-\TO_{s,\lambda}^N)=0\}.
\]
As domain of definition for the powers of $\TO_{s,\lambda}$ we use the Hilbert space defined in Section~\ref{sec:refine} below, which can be seen as a refinement of the Hilbert spaces from Section~\ref{sec:TO}.

Throughout let $\Gamma$ be chosen such that the disks $\mc D_1,\ldots, \mc D_{2m}$ (see \eqref{disks}) used in the geometric construction of $\Gamma$ are all contained in $\CC$, let $\gamma_1,\ldots, \gamma_m$ be the associated generators of $\Gamma$ (see Section~\ref{sec:Schottky}), and set 
\[
 \mc D \sceq \bigcup_{j=1}^{2m} \mc D_j.
\]

\subsection{Refined Hilbert spaces and iterates of transfer operators}\label{sec:refine}

We recall from \cite{Guillope_Lin_Zworski} the definition of a family of Hilbert spaces, depending on a parameter $h>0$, which we use as domain of definition for appropriate powers of the transfer operators. 

Throughout let $\Lambda\sceq \Lambda(\Gamma)$ denote the limit set of $\Gamma$. For $h>0$ we let
\[
 \Lambda(h) \sceq (-h,h) + \Lambda.
\]
By \cite{Guillope_Lin_Zworski} we find $h_0>0$ such that for all $h\in (0,h_0)$, the set $\Lambda(h)$ is bounded, has finitely many connected components, say $N(h)$ many, its connected components
\[
 I_p(h), \quad p=1,\ldots, N(h),
\]
are intervals of lengths at most $Ch$ for some $C>0$ independent of $h$, each connected component is contained in some connected component of $\mc D$, and 
\[
 N(h) = O(h^{-\delta}) \quad\text{as $h\searrow 0$,}
\]
where $\delta=\delta(X) = \dim\Lambda$ is the Hausdorff dimension of $\Lambda$.

For each $h\in (0,h_0)$ and $p\in\{1,\ldots, N(h)\}$ let $\mc E_p(h)$ be the open Euclidean disk in $\CC$ with center in $\RR$ such that 
\[
 \mc E_p(h)\cap\RR = I_p(h),
\]
and let 
\[
 \mc E(h) \sceq \bigcup \mc E_p(h).
\]
For each finite-dimensional unitary space $V$ let $H^2(\mc E_p(h);V)$ denote the Hilbert Bergman space of $V$-valued functions on $\mc E_p(h)$, and let 
\[
 H^2(\mc E(h);V) \sceq \bigoplus_{p=1}^{N(h)} H^2(\mc E_p(h);V).
\]
A slight adaptation of \cite{Guillope_Lin_Zworski} shows that there exists $N_1\in\N$ (independent of $h\in (0,h_0)$) such that for all finite-dimensional unitary spaces $V$, all unitary representations $\varrho\colon\Gamma\to U(V)$ and all $N\geq N_1$, the $N$-th power of $\TO_{s,\varrho}$ defines on operator on $H^2(\mc E(h);V)$:
\[
 \TO_{s,\varrho}^N\colon H^2(\mc E(h);V) \to H^2(\mc E(h);V).
\]
Considered as an operator on $H^2(\mc E(h);V)$, the transfer operator $\TO_{s,\varrho}^N$ remains to be of trace class, and its Fredholm determinant is identical to the one of $\TO_{s,\varrho}^N$ as an operator on the Hilbert space from Section~\ref{sec:TO}.

A rather precise formula for the iterates of $\TO_{s,\varrho}$ can be given. To that end, for any $N\in\N$ and any $\alpha = (\alpha_1,\ldots,\alpha_N)\in \{1,\ldots, 2m\}^N$ we set
\[
 \gamma_\alpha \sceq \gamma_{\alpha_1}\cdots \gamma_{\alpha_N}.
\]
Further, we let
\[
 \mc W_N\sceq \big\{ (\alpha_1,\ldots,\alpha_N) \in \{1,\ldots, 2m\}^N : \forall\, j\in\{1,\ldots, N-1\}\colon \alpha_{j+1}\not=\alpha_j +m \mod 2m\big\}
\]
denote the set of elements in $\{1,\ldots, 2m\}^N$ that correspond to the elements in $\Gamma$ of minimal word length $N$ over the alphabet $\{\gamma_1,\ldots,\gamma_{2m}\}$. For $j\in \{1,\ldots, 2m\}$ we set
\[
 \mc W_N^j \sceq \{ \alpha\in\mc W_N : \alpha_1\not= j+m \mod 2m\}.
\]
A straightforward induction shows that 
\[
 \TO_{s,\varrho}^N = \sum_{j=1}^{2m} 1_{\mc D_j} \sum_{\alpha\in\mc W_N^j} \nu_s(\gamma_\alpha).
\]

\subsection{Separation Lemmas}\label{sec:separation}

Given $z\in \mc D_j$ and $\alpha,\beta\in\mc W_j^N$ for some $j\in\{1,\ldots, 2m\}$ and $N\in\N$, the images of $z$ under $\gamma_\alpha^{-1}$ and $\gamma_\beta^{-1}$ can be rather close to each other. In this section we discuss under which conditions we know that then already $\gamma_\alpha=\gamma_\beta$. These results are crucial for growth bounds of the Fredholm determinant of $\TO_{s,\varrho}^N$, see Proposition~\ref{prop:NTX} below.

Throughout, $X$ refers to the fixed Schottky surface \eqref{space}. All (implied) constants may depend on $X$.

\begin{lem}\label{lem:near}
Let $C>0$. Then there exists $h_1\in (0,1)$ (depending on $X$ and $C$) and $C_1>0$ (depending on $X,C,h_1$) such that for all $j\in\{1,\ldots, 2m\}$, for all $z\in\mc D_j$, for all $h\in (0,h_1)$, for all $N\in\N$ with $N<C_1 \log h^{-1}$ and for all $\alpha,\beta\in\mc W_N^j$ the bound
\[
 \left| \gamma_\alpha^{-1}.z-\gamma_\beta^{-1}.z\right| < Ch
\]
implies $\alpha=\beta$.
\end{lem}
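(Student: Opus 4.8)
\textbf{Proof strategy for Lemma~\ref{lem:near}.}
The plan is to exploit the uniform contraction of the branch maps $\gamma_i^{-1}$ on the disks $\mc D_j$ together with the fact that distinct elements $\gamma_\alpha \neq \gamma_\beta$ of the free group land their images in \emph{different} disks (or at least separated pieces of $\mc D$) at the first coordinate where $\alpha$ and $\beta$ disagree. Concretely, since for $i \neq k+m \bmod 2m$ we have $\overline{\gamma_i^{-1}(\mc D_k)} \subset \mc D_i$, there is a uniform $\theta \in (0,1)$ (depending only on $X$) such that each $\gamma_i^{-1}$ is a $\theta$-contraction in the Euclidean metric on the relevant disks, and there is a uniform lower bound $d_0 > 0$ for the distance between the closures of any two distinct disks $\mc D_i, \mc D_{i'}$. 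I would first record these two geometric facts (contraction constant $\theta$, separation constant $d_0$) as the only input needed from the Schottky construction.

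Next I would set up the comparison. Given $\alpha, \beta \in \mc W_N^j$ with $\alpha \neq \beta$, let $q$ be the largest index with $\alpha_i = \beta_i$ for all $i > q$ (reading the admissible words so that $\gamma_\alpha = \gamma_{\alpha_1}\cdots\gamma_{\alpha_N}$ acts by $\gamma_{\alpha_N}^{-1}$ first), so that $\alpha_q \neq \beta_q$. Writing $w := \gamma_{\alpha_N}^{-1} \cdots \gamma_{\alpha_{q+1}}^{-1} . z$ for the common tail applied to $z$, the points $\gamma_{\alpha_q}^{-1}.w$ and $\gamma_{\beta_q}^{-1}.w$ lie in $\mc D_{\alpha_q}$ and $\mc D_{\beta_q}$ respectively, hence are at distance at least $d_0$ apart. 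Applying the remaining $q-1$ maps $\gamma_{\alpha_{q-1}}^{-1}, \ldots, \gamma_{\alpha_1}^{-1}$ to both — these coincide for $\alpha$ and $\beta$ — and using that each is a $\theta$-contraction, I get
\[
 \left| \gamma_\alpha^{-1}.z - \gamma_\beta^{-1}.z \right| \geq d_0 \, \theta^{\,q-1} \geq d_0 \, \theta^{\,N-1} \geq d_0 \, \theta^{\,N}.
\]
(One has to be slightly careful that the intermediate points stay in disks where the contraction estimate applies; this is exactly the admissibility condition defining $\mc W_N^j$, so it holds.)

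Finally I would convert this into the stated hypothesis on $N$. Given $C > 0$, I want $d_0 \, \theta^{\,N} \geq C h$ whenever $N < C_1 \log h^{-1}$; taking logarithms, this asks $N \log(1/\theta) \leq \log(d_0 / (Ch)) = \log h^{-1} + \log(d_0/C)$. So for $h$ small enough (say $h < h_1$ with $h_1$ chosen so that $\log(d_0/C) \geq -\tfrac12 \log h_1^{-1}$ when $d_0/C < 1$, or $h_1$ arbitrary when $d_0/C \geq 1$) it suffices to take $C_1 := \tfrac{1}{2\log(1/\theta)}$, which depends only on $X$ and $C$. Then $N < C_1 \log h^{-1}$ forces $d_0\theta^N \geq Ch$, so the hypothesis $|\gamma_\alpha^{-1}.z - \gamma_\beta^{-1}.z| < Ch$ contradicts $\alpha \neq \beta$, and we conclude $\alpha = \beta$.

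The main obstacle I anticipate is not the contraction estimate itself but making the bookkeeping uniform: checking that the intermediate orbit points $\gamma_{\alpha_i}^{-1}\cdots\gamma_{\alpha_{q+1}}^{-1}.z$ indeed lie in disks on which $\gamma_{\alpha_{i-1}}^{-1}$ is a genuine contraction with the \emph{same} constant $\theta$ regardless of $j$, $\alpha$, $\beta$, $h$ and $N$, and that $d_0$ and $\theta$ can be extracted purely from the fixed geometric data of $\Gamma$. This is where the precise definition of $\mc W_N^j$ and the disjointness of the closures $\overline{\mc D_1}, \ldots, \overline{\mc D_{2m}}$ do the work, and it should be straightforward, if a little tedious, to verify. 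The dependence structure of the constants ($h_1$ on $X$ and $C$; $C_1$ on $X$, $C$, $h_1$) then matches the statement.
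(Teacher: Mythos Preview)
Your overall architecture matches the paper's: establish a uniform lower bound $|\gamma_\alpha^{-1}.z - \gamma_\beta^{-1}.z| \geq c\varrho^N$ for $\alpha \neq \beta$ in $\mc W_N^j$ (the paper simply quotes this as \cite[Lemma~4.4]{JN}) and then convert via logarithms; the conversion in your final paragraph is fine. Your direct argument for the separation bound, however, has a bookkeeping slip and a genuine gap. The slip: with $\gamma_\alpha = \gamma_{\alpha_1}\cdots\gamma_{\alpha_N}$ one has $\gamma_\alpha^{-1}.z = \gamma_{\alpha_N}^{-1}(\cdots(\gamma_{\alpha_1}^{-1}.z))$, so $\gamma_{\alpha_1}^{-1}$ is applied \emph{first}, not $\gamma_{\alpha_N}^{-1}$. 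If $q$ is the largest index with $\alpha_q \neq \beta_q$ (so $\alpha_i=\beta_i$ for $i>q$), then the maps that coincide for $\alpha$ and $\beta$ are the \emph{last} $N-q$ applied, namely $\gamma_{\alpha_{q+1}}^{-1},\ldots,\gamma_{\alpha_N}^{-1}$; there is no reason for $\alpha_1,\ldots,\alpha_{q-1}$ to agree with $\beta_1,\ldots,\beta_{q-1}$ as you claim.

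The genuine gap is the direction of the contraction inequality. A $\theta$-contraction yields $|f(u)-f(v)|\leq \theta|u-v|$, an \emph{upper} bound; applying the common tail of maps after the split can only shrink the initial separation $d_0$, so the contraction constant alone cannot produce $\geq d_0\theta^{q-1}$. What you actually need is a uniform \emph{lower} bound $\theta_1>0$ on $|(\gamma_i^{-1})'|$ over $\bigcup_{k\neq i+m}\overline{\mc D_k}$, which is available because the pole of $\gamma_i^{-1}$ lies in $\mc D_{i+m}$, away from this compact set. The M\"obius identity $|g.u-g.v|=|g'(u)|^{1/2}|g'(v)|^{1/2}|u-v|$ then gives, with the corrected indexing, $|\gamma_\alpha^{-1}.z-\gamma_\beta^{-1}.z|\geq \theta_1^{\,N-q}d_0\geq (\min(\theta_1,1))^{N}d_0$, after which your final paragraph goes through verbatim with this constant in place of $\theta$. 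The obstacle you flagged (uniformity of the contraction across all admissible orbits) is not the issue; the issue is that contraction is the wrong side of the bi-Lipschitz estimate for this step.
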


\begin{proof}
By \cite[Lemma~4.4]{JN} we find $c>0$ and $\varrho\in (0,1)$ such that for all $j\in\{1,\ldots, 2m\}$, for all $z\in\mc D_j$, for all $N\in\N$, for all $\alpha,\beta\in\mc W_N^j$ with $\alpha\not=\beta$ we have
\begin{equation}\label{JN_cr}
 \left|\gamma_\alpha^{-1}.z - \gamma_\beta^{-1}.z \right| \geq c\varrho^N.
\end{equation}
Let $C>0$. Suppose that we have $h\in (0,1)$, $j\in\{1,\ldots, 2m\}$, $z\in\mc D_j$, $N\in\N$, $\alpha,\beta\in\mc W_N^j$ such that $\alpha\not=\beta$ and 
\begin{equation}\label{near}
 \left| \gamma_\alpha^{-1}.z - \gamma_\beta^{-1}.z\right| < Ch.
\end{equation}
Combining \eqref{JN_cr} and \eqref{near} yields $c\varrho^N < Ch$, thus (note that $\log\varrho<0$)
\[
 N > c_1 + c_2  \log h^{-1}.
\]
where
\[
 c_1 \sceq \frac{\log C - \log c}{\log\varrho},\qquad c_2 \sceq \frac{1}{\log \varrho^{-1}}.
\]
Pick $h_0\in (0,1)$ such that 
\[
  c_2 > \frac{c_1}{\log h_0^{-1}}
\]
and pick 
\[
 C_1 \in \left(0 , c_2 - \frac{c_1}{\log h_0^{-1}}\right).
\]
Thus,
\[
 N > C_1\log h^{-1}.
\]
This completes the proof of the lemma.
\end{proof}

\begin{lem}\label{lem:smallconj}
There exists $C_2>0$ (depending on $X$) such that for all finite covers $\wt X=\wt\Gamma\backslash\hh$ of $X$, for all $N\in\N$ with $N<C_2\ell_0(\wt X)$ and for all $\alpha,\beta\in \mc W_N$ with $\Tr\Ind_{\wt\Gamma}^\Gamma \trivrep_{\wt \Gamma}(\gamma_\alpha\gamma_\beta^{-1}) \not=0$ we have $\alpha=\beta$. 
\end{lem}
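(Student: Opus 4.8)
The plan is to relate the trace of the induced representation $\Ind_{\wt\Gamma}^\Gamma\trivrep_{\wt\Gamma}$ at a group element $g$ to the combinatorics of double cosets, and then to translate the hypothesis ``$\Tr\lambda(\gamma_\alpha\gamma_\beta^{-1})\neq 0$'' into a statement forcing $\gamma_\alpha\gamma_\beta^{-1}$ to be ``short'' in a sense incompatible with $\ell_0(\wt X)$ being large, unless $\gamma_\alpha\gamma_\beta^{-1}=\id$. Concretely, recall the standard formula for the character of an induced representation: for $g\in\Gamma$,
\[
 \Tr\Ind_{\wt\Gamma}^\Gamma\trivrep_{\wt\Gamma}(g) = \#\big\{\, x\wt\Gamma \in \Gamma/\wt\Gamma \ :\ x^{-1}gx\in\wt\Gamma \,\big\}.
\]
Thus $\Tr\lambda(\gamma_\alpha\gamma_\beta^{-1})\neq 0$ precisely means there exists $x\in\Gamma$ with $x^{-1}\gamma_\alpha\gamma_\beta^{-1}x\in\wt\Gamma$. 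In particular $\wt\Gamma$ contains the conjugate element $g\sceq x^{-1}\gamma_\alpha\gamma_\beta^{-1}x$, which has the same conjugacy class (in $\PSL_2(\RR)$, hence the same trace and translation length) as $\gamma_\alpha\gamma_\beta^{-1}$.

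Now the key geometric input: if $\alpha,\beta\in\mc W_N$ then $\gamma_\alpha$ and $\gamma_\beta$ are reduced words of length $N$ in the free group $\Gamma = \langle\gamma_1^{\pm1},\dots,\gamma_m^{\pm1}\rangle$, so $\gamma_\alpha\gamma_\beta^{-1}$ has reduced word length at most $2N$. The first step is therefore a quantitative ``word length controls displacement/translation length'' estimate for Schottky groups: there is a constant $a>0$ (depending only on $X$, i.e. on the geometric data $\mc D_1,\dots,\mc D_{2m},\gamma_1,\dots,\gamma_m$) such that every nontrivial $g\in\Gamma$ of reduced word length $\leq L$ satisfies $\ell(g)\leq aL$ — equivalently $N_G(g)\leq e^{aL}$. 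This follows from sub-multiplicativity of the operator norm: each generator $\gamma_i$ (as a matrix in $\SL_2(\RR)$, after a fixed lift) has $\|\gamma_i\|\leq e^{a/2}$ for a suitable $a$, hence a word of length $\leq L$ has norm $\leq e^{aL/2}$ and $N_G(g)\leq e^{aL}$, giving $\ell(g)=\log N_G(g)\leq aL$. (If $g$ happens to be elliptic or parabolic this cannot occur inside a Schottky group, and $g=\id$ is the case we are trying to reach.)

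Combining the two steps: if $\gamma_\alpha\gamma_\beta^{-1}\neq\id$, then it is a nontrivial hyperbolic element of reduced word length $\leq 2N$, hence $\ell(\gamma_\alpha\gamma_\beta^{-1})\leq 2aN$; since $g=x^{-1}\gamma_\alpha\gamma_\beta^{-1}x\in\wt\Gamma$ is conjugate to it, $g$ is a nontrivial hyperbolic element of $\wt\Gamma$ with $\ell(g)=\ell(\gamma_\alpha\gamma_\beta^{-1})\leq 2aN$, and it represents a periodic geodesic on $\wt X$, so
\[
 \ell_0(\wt X) \leq \ell(g) \leq 2aN.
\]
Setting $C_2\sceq (2a)^{-1}$ (depending only on $X$), the hypothesis $N<C_2\ell_0(\wt X)$ forces $\ell_0(\wt X)\leq 2aN<\ell_0(\wt X)$, a contradiction. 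Hence $\gamma_\alpha\gamma_\beta^{-1}=\id$, i.e. $\gamma_\alpha=\gamma_\beta$; and since $\mc W_N$ indexes the elements of minimal word length $N$ bijectively with the reduced words, $\gamma_\alpha=\gamma_\beta$ implies $\alpha=\beta$. The main obstacle — really the only nontrivial point — is making precise and clean the word-length-to-length estimate uniformly in $N$; everything else is the character-of-induced-representation identity plus the observation that conjugation in $\PSL_2(\RR)$ preserves translation length and that a nontrivial element of $\wt\Gamma$ produces a periodic geodesic of that length on $\wt X$.
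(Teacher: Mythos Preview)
Your proposal is correct and follows essentially the same route as the paper's proof: both identify $\Tr\Ind_{\wt\Gamma}^\Gamma\trivrep_{\wt\Gamma}(h)\neq 0$ with the existence of a $\Gamma$-conjugate of $h$ lying in $\wt\Gamma$, bound the matrix norm of $h=\gamma_\alpha\gamma_\beta^{-1}$ by sub-multiplicativity over the $\leq 2N$ generators, convert this to an upper bound on $\ell(h)$, and compare against $\ell_0(\wt X)$ to force $h=\id$ (hence $\alpha=\beta$ by freeness). The only cosmetic difference is that the paper uses the Frobenius norm and the explicit inequality $\|h\|_F^2\geq e^{\ell(h)}$, whereas you use the operator norm and $N_G(h)\leq\|h\|^2$; these yield the same conclusion with slightly different constants.
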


\begin{proof}
Let $\|\cdot\|_F$ denote the Frobenius norm on $\SL_2(\RR)$. Let $g=\begin{pmatrix} a & b \\ c & d \end{pmatrix} \in\SL_2(\RR)$ be hyperbolic. Then we have
\begin{align*}
  \left\| g \right\|_F^2 & = \Tr(g^\top g) = a^2 + b^2 + c^2 + d^2 
   = (a+d)^2 + (b-c)^2 - 2 
   \\
   & \geq (\Tr g)^2 - 2 = \left( e^{\ell(g)/2} + e^{-\ell(g)/2}\right)^2 - 2 
   \\
   & \geq e^{\ell(g)}.
\end{align*}
Set 
\[
 K \sceq \max\big\{\|\gamma_j\|_F : j\in\{1,\ldots, 2m\}\big\}
\]
and
$$
C_{2} \sceq \frac{1}{4 \log K}.
$$
Let $\wt\Gamma$ be any subgroup of $\Gamma$ of finite index. We argue by contradiction. Let $N\in\N$ with $ N < C_{2} \ell_0(\wt X) $ and suppose that there exist $\alpha,\beta\in\mc W_N$ such that $\alpha\not=\beta$ and $\Tr\Ind_{\wt\Gamma}^\Gamma \trivrep_{\wt \Gamma}(\gamma_\alpha\gamma_\beta^{-1}) \not=0$. Let 
\[
 h\sceq \gamma_\alpha\gamma_\beta^{-1}.
\]
Since $\alpha,\beta\in\mc W_N$, the element $ h $ is the product of at most $ 2N $ matrices in the set $ \{\gamma_{1},\ldots, \gamma_{2m}\} $, which shows that 
\begin{equation}\label{hupper}
 \| h\|_F \leq K^{2N}.
\end{equation}
Since $\Tr\Ind_{\wt\Gamma}^\Gamma \trivrep_{\wt \Gamma}(h) \not=0$,  there exists $p\in\Gamma$ is such that
\[
 php^{-1} \in \wt\Gamma.
\]
Since $\alpha\not=\beta$, the element $h$ is hyperbolic. Further,  
\[
 \ell(h) = \ell\big(php^{-1}\big) \geq \ell_0\big(\wt X\big).
\]
Thus, 
\begin{equation}\label{hlower}
 \| h\|_F \geq e^{\ell_0(\wt X)/2}.
\end{equation}
Combining \eqref{hupper} and \eqref{hlower} yields
\[
 N\geq \frac{1}{4\log K} \ell_0(\wt X) = C_{2}\ell_0(\wt X),
\]
a contradiction. This completes the proof (note that $K>1$). 
\end{proof}

The combination of Lemmas~\ref{lem:near} and \ref{lem:smallconj} yields the following result.

\begin{lem}\label{lem:vanishing}
Let $C>0$. Then there exists $h_1\in (0,1)$ and $\eps_0>0$ such that for all $h\in (0,h_1)$, for all finite covers $\wt X = \wt\Gamma\backslash\hh$ of $X$, for all $N\in\N$ with 
$N\leq \eps_0\big( \ell_0(\wt X) + \log h^{-1}\big)$, for all $j\in\{1,\ldots, 2m\}$, for all $z\in \mc D_j$, for all $\alpha,\beta\in\mc W_N^j$ the following is satisfied: if 
\[
  \Tr\Ind_{\wt\Gamma}^\Gamma  \trivrep_{\wt \Gamma}\big(\gamma_\alpha\gamma_\beta^{-1}\big) \not=0\quad\text{and}\quad \left|\gamma_\alpha^{-1}.z - \gamma_\beta^{-1}.z \right| < Ch
\]
then $\alpha=\beta$.
\end{lem}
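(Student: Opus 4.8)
The plan is to deduce Lemma~\ref{lem:vanishing} from Lemmas~\ref{lem:near} and~\ref{lem:smallconj} by a short contradiction argument that combines their contrapositives. The point is that if $\alpha\neq\beta$ while both of the displayed hypotheses hold, then Lemma~\ref{lem:near} forces $N$ to be large \emph{relative to $\log h^{-1}$}, while Lemma~\ref{lem:smallconj} forces $N$ to be large \emph{relative to $\ell_0(\wt X)$}; adding these two lower bounds contradicts the upper bound for $N$, which is a fixed multiple of $\ell_0(\wt X)+\log h^{-1}$.

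Concretely, fixing $C>0$, I would first apply Lemma~\ref{lem:near} to obtain $h_1\in(0,1)$ and $C_1>0$ (both depending only on $X$ and $C$), and apply Lemma~\ref{lem:smallconj} to obtain $C_2>0$ (depending only on $X$); I keep this $h_1$ and set
\[
 \eps_0 \sceq \frac{C_1 C_2}{2(C_1+C_2)} \in (0,\infty),
\]
which depends only on $X$ and $C$. Note that $h_1<1$ automatically, so for $h\in(0,h_1)$ we have $\log h^{-1}>0$, and $\ell_0(\wt X)>0$ for every finite cover $\wt X$, so that $\ell_0(\wt X)+\log h^{-1}>0$ throughout.

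Next I would argue by contradiction. Suppose $h\in(0,h_1)$, $\wt X=\wt\Gamma\backslash\hh$ is a finite cover of $X$, $N\in\N$ satisfies $N\leq\eps_0\big(\ell_0(\wt X)+\log h^{-1}\big)$, $j\in\{1,\ldots,2m\}$, $z\in\mc D_j$, and $\alpha,\beta\in\mc W_N^j$ satisfy $\Tr\Ind_{\wt\Gamma}^\Gamma\trivrep_{\wt\Gamma}(\gamma_\alpha\gamma_\beta^{-1})\neq 0$ and $\big|\gamma_\alpha^{-1}.z-\gamma_\beta^{-1}.z\big|<Ch$, yet $\alpha\neq\beta$. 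Since $\alpha,\beta\in\mc W_N^j$ and $\big|\gamma_\alpha^{-1}.z-\gamma_\beta^{-1}.z\big|<Ch$ with $h\in(0,h_1)$, the contrapositive of Lemma~\ref{lem:near} gives $N\geq C_1\log h^{-1}$. Since $\mc W_N^j\subseteq\mc W_N$ and $\Tr\Ind_{\wt\Gamma}^\Gamma\trivrep_{\wt\Gamma}(\gamma_\alpha\gamma_\beta^{-1})\neq 0$, the contrapositive of Lemma~\ref{lem:smallconj} gives $N\geq C_2\ell_0(\wt X)$. Hence $\log h^{-1}\leq N/C_1$ and $\ell_0(\wt X)\leq N/C_2$, and adding,
\[
 \ell_0(\wt X)+\log h^{-1} \leq N\left(\frac1{C_1}+\frac1{C_2}\right) = N\cdot\frac{C_1+C_2}{C_1 C_2} = \frac{N}{2\eps_0},
\]
that is, $N\geq 2\eps_0\big(\ell_0(\wt X)+\log h^{-1}\big)\geq 2N$ by the assumed upper bound on $N$. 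Since $N\geq 1$, this is impossible, so $\alpha=\beta$, as claimed.

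I expect no real obstacle: the two genuinely hard estimates are already isolated in Lemmas~\ref{lem:near} and~\ref{lem:smallconj}, and the remaining work is purely combinatorial bookkeeping. The only point needing a moment's care is that the hypothesis on $N$ is stated \emph{additively} in $\ell_0(\wt X)$ and $\log h^{-1}$, whereas the two input lemmas each control $N$ against only one of these quantities; choosing $\eps_0$ strictly below the threshold $C_1C_2/(C_1+C_2)$ (here, exactly half of it) is what turns the sum of the two lower bounds into a strict contradiction rather than a mere equality.
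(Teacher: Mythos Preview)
Your proof is correct and matches the paper's approach exactly: the paper simply states that Lemma~\ref{lem:vanishing} is obtained by combining Lemmas~\ref{lem:near} and~\ref{lem:smallconj}, without spelling out the details, and your contradiction argument is the natural way to make this precise. Your explicit choice $\eps_0 = C_1 C_2/\bigl(2(C_1+C_2)\bigr)$ and the resulting inequality $N\geq 2N$ are clean; an equivalent variant would be to note that $N\leq \eps_0\bigl(\ell_0(\wt X)+\log h^{-1}\bigr)$ forces either $N<C_1\log h^{-1}$ or $N<C_2\ell_0(\wt X)$ for $\eps_0$ small enough, and then apply the relevant lemma directly, but this is the same idea.
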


\subsection{Bounds on Fredholm determinants}\label{sec:fredholmbounds}

In this section, we provide growth bounds on the Fredholm determinants of iterates of the transfer operator $\TO_{s,\lambda}$, where $\lambda=\Ind_{\wt\Gamma}^\Gamma\trivrep_{\wt\Gamma}$ for finite covers $\wt X =\wt\Gamma\backslash\hh$ of $X=\Gamma\backslash\hh$. These bounds together with an application of Titchmarsh's Number of Zeros Theorem allow us to prove Theorem~\ref{thm:box}, see Section~\ref{sec:finalproof} below.

Throughout, $X=\Gamma\backslash\hh$ is the fixed Schottky surface \eqref{space}, $\delta=\delta(X)$ denotes the Hausdorff dimension of the limit set of $\Gamma$, and all powers of transfer operators are defined on the Hilbert spaces from Section~\ref{sec:refine} for some $h\in (0,h_0)$. As in Section~\ref{sec:separation}, all constants may depend on $X$.

\begin{prop}\label{last_bound}
There exists a constant $C>0$ such that for all finite covers $\wt X$ of $X$, for all $N\in\N$, for all $s\in\CC$ with $\Rea s > \delta$ we have
\[
 -\log\left| \det\left( 1 - \TO_{s,\lambda}^N \right) \right| \leq C N \dcov(\wt X, X) \frac{\Rea s}{\Rea s -\delta} e^{-(\Rea s-\delta)\ell_0(\wt X)},
\]
where $\lambda\sceq\Ind_{\wt\Gamma}^\Gamma\trivrep_{\wt\Gamma}$ denotes the representation of $\Gamma$ that is induced by the trivial character $\trivrep_{\wt\Gamma}$ of $\wt\Gamma$.
\end{prop}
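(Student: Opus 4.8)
The plan is to bound $-\log|\det(1-\TO_{s,\lambda}^N)|$ by expanding the logarithm of the Fredholm determinant into its trace series, which is legitimate for $\Rea s > \delta$ where the relevant operator norms are small. Recall from Section~\ref{sec:TO} that for $\Rea s > \delta$,
\[
 -\log\det\bigl(1-\TO_{s,\lambda}^N\bigr) = -\sum_{n=1}^\infty \frac1n \Tr\bigl(\TO_{s,\lambda}^{Nn}\bigr),
\]
so it suffices to control $|\Tr(\TO_{s,\lambda}^{Nn})|$ for every $n\geq 1$ and sum the resulting geometric-type series. I would first work out, using the explicit iterate formula $\TO_{s,\varrho}^M = \sum_{j}1_{\mc D_j}\sum_{\alpha\in\mc W_M^j}\nu_s(\gamma_\alpha)$ from Section~\ref{sec:refine} together with the definition $\nu_s(\gamma_\alpha)f(z)=(\gamma_\alpha'(z))^s\varrho(\gamma_\alpha)f(\gamma_\alpha.z)$, a trace formula of the shape
\[
 \Tr\bigl(\TO_{s,\lambda}^{M}\bigr) = \sum_{j=1}^{2m}\ \sum_{\substack{\alpha\in\mc W_M^j \\ \alpha \text{ "returns" to } j}} \Tr\lambda(\gamma_\alpha)\cdot\bigl(\text{fixed-point / transfer-operator trace factor}\bigr),
\]
where the scalar factor is the standard Bergman-kernel trace $\int$ or the fixed-point contribution $\dfrac{(\gamma_\alpha'(z_\alpha))^s}{1-\gamma_\alpha'(z_\alpha)}$ evaluated at the attracting fixed point $z_\alpha$ of $\gamma_\alpha$ (this is the classical Ruelle computation; I would cite the corresponding lemma in \cite{Guillope_Lin_Zworski}). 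The key point is that $|\gamma_\alpha'(z_\alpha)|^s$ has size $e^{-\Rea s\,\ell(\gamma_\alpha)}$ and $\ell(\gamma_\alpha)\geq \ell_0(\wt X)$ whenever $\Tr\lambda(\gamma_\alpha)\neq 0$ — because $\Tr\lambda(\gamma_\alpha)\neq 0$ forces $\gamma_\alpha$ to be conjugate into $\wt\Gamma$, hence to be (a power of) a closed geodesic on $\wt X$.

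The heart of the argument is then a counting estimate: I need to bound the number of relevant words $\alpha$, and crucially, to control the representation-theoretic factor. Here is where the structure of $\lambda=\Ind_{\wt\Gamma}^\Gamma\trivrep_{\wt\Gamma}$ enters: $|\Tr\lambda(\gamma_\alpha)| \leq \dim\lambda = \dcov(\wt X,X)$ always, and $\Tr\lambda(\gamma_\alpha)$ vanishes unless $\gamma_\alpha$ is conjugate into $\wt\Gamma$. Combining $|\Tr\lambda(\gamma_\alpha)|\leq \dcov(\wt X,X)$ with the geodesic-length bound $\ell(\gamma_\alpha)\geq\ell_0(\wt X)$ gives, for each length scale, a factor $\dcov(\wt X,X)\,e^{-\Rea s\,\ell_0(\wt X)}$ in front of a sum $\sum_\alpha e^{-(\Rea s)(\ell(\gamma_\alpha)-\ell_0(\wt X))}$ over the contributing words; using $\ell(\gamma_\alpha)\geq (\text{const})\cdot M$ for $\alpha\in\mc W_M$ and the pressure estimate (the Poincaré series for $\Gamma$ converges for exponent $>\delta$), this residual sum is bounded by $\dfrac{C}{\Rea s - \delta}$ uniformly, after absorbing the $N(h)=O(h^{-\delta})$ Bergman-trace normalization. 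Assembling: $|\Tr(\TO_{s,\lambda}^{Nn})| \leq C\,Nn\,\dcov(\wt X,X)\,\dfrac{\Rea s}{\Rea s-\delta}\,e^{-(\Rea s-\delta)\ell_0(\wt X)}\cdot\rho^{n}$ for some $\rho<1$ (the geometric decay coming from the extra word-length as $n$ grows, i.e. from $e^{-(\Rea s)(\ell_0(\wt X))(n-1)}$ being summable, or more carefully from the spectral radius being $<1$), and summing $\sum_n \frac1n |\Tr(\TO_{s,\lambda}^{Nn})|$ yields the claimed bound with the factor $N$ surviving and $n$-dependence summed away.

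I expect the main obstacle to be making the counting/pressure step \emph{uniform in the cover} $\wt X$ while simultaneously extracting the clean factor $N\cdot e^{-(\Rea s-\delta)\ell_0(\wt X)}$ rather than, say, $e^{-(\Rea s-\delta)\ell_0(\wt X)}$ with a worse $N$-dependence. Concretely: the bound must be an honest bound for all $N\in\N$, including $N$ large, so one cannot rely on the separation Lemmas of Section~\ref{sec:separation} (which only apply for $N$ below a threshold depending on $\ell_0(\wt X)$); instead the estimate has to be soft, using only $|\Tr\lambda|\leq \dim\lambda$, the vanishing criterion, and the geometric length bound $\ell(\gamma_\alpha)\geq\ell_0(\wt X)$ whenever the trace is nonzero. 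The careful bookkeeping is to see that summing the word-length variable $M=Nn$ against $e^{-(\Rea s)\ell(\gamma_\alpha)}$ with $\ell(\gamma_\alpha)\geq \max\{\ell_0(\wt X), cM\}$ produces exactly $N\cdot e^{-(\Rea s-\delta)\ell_0(\wt X)}\cdot\frac{\Rea s}{\Rea s-\delta}$ after the $\frac1n$ and the $n$-sum: the $\frac{\Rea s}{\Rea s-\delta}$ is the tail of $\sum_{M}e^{-(\Rea s-\delta)M}$-type sums, the $e^{-(\Rea s-\delta)\ell_0}$ is pulled out once, and the linear factor $N$ reflects that each power $\TO^{N n}$ contributes essentially its word length $Nn$ in the trace count, with the $n$ being killed by $\frac1n$ times geometric decay. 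Getting these three factors to come out exactly as stated, uniformly over $\wt X$, is the delicate part; everything else is the standard Ruelle transfer-operator trace calculus already set up in Sections~\ref{sec:TO} and~\ref{sec:refine}.
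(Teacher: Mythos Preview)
Your overall strategy matches the paper's: expand the log-determinant as a trace series, use the trace formula expressing $\Tr\TO_{s,\lambda}^m$ as a sum over conjugacy classes weighted by $\chi_\lambda=\Tr\lambda$, exploit $0\leq\chi_\lambda\leq\dim\lambda=\dcov(\wt X,X)$ together with the vanishing criterion ($\chi_\lambda(\gamma)\neq 0\Rightarrow\ell(\gamma)\geq\ell_0(\wt X)$), and finish with a geodesic-counting estimate. However, your account of where the linear factor $N$ comes from is wrong, and this is precisely the step you flagged as delicate.

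You propose an individual bound $|\Tr\TO_{s,\lambda}^{Nn}|\lesssim Nn\cdot(\cdots)\cdot\rho^n$, with the word length $Nn$ supplying the linear factor and a geometric $\rho^n$ to make the $n$-sum converge. No such bound is established, and it is not clear how it would follow from the trace formula uniformly in $N$ and $\wt X$. The paper instead extracts $N$ by the one-line inequality
\[
\sum_{n\geq 1}\frac{1}{n}\bigl|\Tr\TO_{s,\lambda}^{Nn}\bigr|
= N\sum_{n\geq 1}\frac{1}{Nn}\bigl|\Tr\TO_{s,\lambda}^{Nn}\bigr|
\leq N\sum_{m\geq 1}\frac{1}{m}\bigl|\Tr\TO_{s,\lambda}^{m}\bigr|,
\]
after which the remaining sum is independent of $N$. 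That sum rearranges (via the trace formula and the substitution $k=m/d$) to $\sum_{\overline\gamma\in\overline\Gamma}\chi_\lambda(\gamma)\,\frac{e^{-\Rea s\,\ell(\gamma)}}{1-e^{-\ell(\gamma)}}$; applying the vanishing criterion and $\chi_\lambda\leq\dcov(\wt X,X)$ leaves $\dcov(\wt X,X)\sum_{\ell(\gamma)\geq\ell_0(\wt X)}e^{-\Rea s\,\ell(\gamma)}$, and Stieltjes integration against the prime geodesic theorem $\Pi_\Gamma(t)\leq Ce^{\delta t}$ for the \emph{base} group $\Gamma$ produces exactly $\frac{\Rea s}{\Rea s-\delta}e^{-(\Rea s-\delta)\ell_0(\wt X)}$. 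No $\rho^n$ appears, and the refined Hilbert spaces of Section~\ref{sec:refine} with their $N(h)=O(h^{-\delta})$ bound play no role whatsoever here---you are conflating this proposition with Proposition~\ref{prop:NTX}. Proposition~\ref{last_bound} is a purely soft trace-formula estimate carried out on the original Bergman space of Section~\ref{sec:TO}.
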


\begin{proof}
Since $ \mathrm{Re}(s) > \delta$, we have
\begin{align*}
\det\left(  1-\mathcal{L}_{
s,\lambda}^{N} \right) &= \exp\left( - \sum_{n=1}^{\infty} \frac{1}{n}\mathrm{Tr}\, \left(\mathcal{L}_{s,\lambda}^{nN}\right)\right).
\end{align*}
This leads to 
$$
\left\vert \det\left(  1-\mathcal{L}_{s,\lambda}^{N} \right) \right\vert = \exp\left( -\mathrm{Re} \sum_{n=1}^{\infty} \frac{1}{n}\mathrm{Tr}\, \left(\mathcal{L}_{s,\lambda}^{nN}\right) \right),
$$
and therefore 
\begin{equation}\label{eest1}
- \log \left\vert \det\left(  1-\mathcal{L}_{s,\lambda}^{N} \right) \right\vert = \mathrm{Re} \sum_{n=1}^{\infty} \frac{1}{n}\mathrm{Tr}\, \left(\mathcal{L}_{s,\lambda}^{nN}\right) \leq \sum_{n=1}^{\infty} \frac{1}{n}\left\vert \mathrm{Tr}\, \left(\mathcal{L}_{s,\lambda}^{nN}\right) \right\vert.
\end{equation}
By adding extra non-negative terms to the infinite sum, we obtain 
\begin{equation}\label{eest2}
\sum_{n=1}^{\infty} \frac{1}{n}\left\vert \mathrm{Tr}\, \left(\mathcal{L}_{s,\lambda}^{nN}\right) \right\vert = N \sum_{n=1}^{\infty} \frac{1}{Nn}\left\vert \mathrm{Tr}\, \left(\mathcal{L}_{s,\lambda}^{nN}\right) \right\vert \leq N \sum_{m=1}^{\infty} \frac{1}{m}\left\vert \mathrm{Tr}\, \left(\mathcal{L}_{s,\lambda}^{m}\right) \right\vert. 
\end{equation}
Let $ L_{S}(\gamma) $ denote the word length of $ \gamma $ with respect to the generating set $ S=\{\gamma_{1}, \dots, \gamma_{2m}\} $ of $ \Gamma  $.
We denote by $ \mathrm{WL}(\gamma) = \min\{ L_{S}(g) : g\in \overline{\gamma}\} $ the minimal word length of any element in the conjugacy class of $ \gamma $. Analogously to \cite[Proof of Proposition~2.2]{JNS} we find 
\begin{equation}\label{Final_trace}
\mathrm{Tr}( \lt_{s,\lambda}^m) = \sum_{d \mid m} \sum_{\overline{\gamma}\in \overline{\Gamma}_{p} \atop \mathrm{WL}(\gamma) = d} d \chi_{\lambda}(\gamma^{m/d}) \frac{e^{-s \ell(\gamma)\frac{m}{d}}}{1-e^{-\ell(\gamma)\frac{m}{d}}},
\end{equation}
where
\[
\chi_{\lambda}(\gamma) \sceq \Tr\Ind_{\wt\Gamma}^\Gamma  \trivrep_{\wt \Gamma}(\gamma).
\]
Combining \eqref{eest1}--\eqref{Final_trace} yields
$$
- \log \left\vert \det\left(  1-\mathcal{L}_{s,\lambda}^{N} \right) \right\vert \leq N \sum_{m=1}^{\infty} \frac{1}{m} \sum_{d \mid m} \sum_{\overline{\gamma}\in \overline{\Gamma}_{p} \atop \mathrm{WL}(\gamma) = d} d \chi_{\lambda}(\gamma^{m/d}) \frac{e^{- \mathrm{Re}(s) \ell(\gamma)\frac{m}{d}}}{1-e^{-\ell(\gamma)\frac{m}{d}}}.
$$
Introducing the new variable $ k = m/d $ and rearranging the above sum accordingly leads to
\begin{align*}
- \log \left\vert \det\left(  1-\mathcal{L}_{s,\lambda}^{N} \right) \right\vert &\leq N \sum_{k=1}^{\infty} \sum_{\overline{\gamma}\in \overline{\Gamma}_{p}} \frac{1}{k} \chi_{\lambda}(\gamma^{k}) \frac{e^{- \mathrm{Re}(s) \ell(\gamma)k }}{1-e^{-\ell(\gamma)k}}\\ &\leq N \sum_{k=1}^{\infty} \sum_{\overline{\gamma}\in \overline{\Gamma}_{p}} \chi_{\lambda}(\gamma^{k}) \frac{e^{- \mathrm{Re}(s) \ell(\gamma)k }}{1-e^{-\ell(\gamma)k}},
\end{align*}
where we have dropped the $ 1/k $-terms in the last estimate. Further, 
\begin{align*}
-\log \left\vert \det\left(  1-\mathcal{L}_{s,\lambda}^{N} \right) \right\vert & \leq N  \sum_{\overline{\gamma}\in \overline{\Gamma}} \chi_{\lambda}(\gamma) \frac{e^{- \mathrm{Re}(s) \ell(\gamma)}}{1-e^{-\ell(\gamma)}}\\
&\leq  \frac{N}{1-e^{-\ell_{0}(X)}} \sum_{\overline{\gamma}\in \overline{\Gamma}} \chi_{\lambda}(\gamma) e^{- \mathrm{Re}(s) \ell(\gamma)}.
\end{align*}
Recall that $ \chi_{\lambda}(\gamma) > 0 $ implies that $\gamma$ in conjugate to an element in $\wt\Gamma$, and hence $ \ell(\gamma)\geq \ell_{0}\big( \wt X \big)$. Moreover, $\chi_\lambda\leq [\Gamma:\wt\Gamma]$. Thus, 
\begin{equation}\label{consequence_of_Frobenius}
-\log \left\vert \det\left(  1-\mathcal{L}_{s,\lambda}^{N} \right) \right\vert \leq \frac{N [\Gamma : \widetilde{\Gamma}] }{1-e^{-\ell_{0}(X)}} \sum_{\substack{ \overline{\gamma}\in \overline{\Gamma} \\ \ell(\gamma) \geq \ell_{0}(\widetilde{X})}}  e^{- \ell(\gamma)\mathrm{Re}(s)}.
\end{equation}
By the prime geodesic theorem \cite[Corollary~11.2]{Lalley} we find a constant $ C >0 $ such that 
\begin{equation}\label{prime_orbit_estimate}
\Pi_{\Gamma}(t) := \# \{ \overline{\gamma}\in \overline{\Gamma} : \ell(\gamma) \leq t \} \leq Ce^{\delta t}.
\end{equation} 
Interpreting the right hand side of \eqref{consequence_of_Frobenius} as a Stieltjes integral and using (\ref{prime_orbit_estimate}) we obtain 
\begin{align*}
\sum_{\substack{ \overline{\gamma}\in \overline{\Gamma} \\ \ell(\gamma) \geq \ell_{0}(\widetilde{X})}}  e^{- \ell(\gamma)\mathrm{Re}(s)} = \mathrm{Re}(s)\int_{\ell_{0}(\widetilde{X})}^{\infty} e^{-\mathrm{Re}(s)x}  \Pi_{\Gamma}(x)dx \leq C \frac{\mathrm{Re}(s) e^{-(\mathrm{Re}(s)-\delta)\ell_{0}(\widetilde{X})}}{\mathrm{Re}(s)-\delta}.
\end{align*}
This completes the proof of Proposition \ref{last_bound}.
\end{proof}

\begin{prop}\label{prop:NTX}
There exists $\eps_0>0$, $N_0\in\N$, and a map $\eta\colon\RR\to\RR$ that is strictly concave, strictly increasing and has a unique zero at $\delta/2$ such that for each pair $\sigma_1>\sigma_0\geq 0$ and each $T_0\in\RR$ there exists a constant $c>0$ (depending continuously on $T_0$) such that for all $T\in\RR$ and all $s\in (\sigma_0,\sigma_1)+i(T-T_0,T+T_0)$ and each finite cover $\wt X$ of $X$ we have
\[
\log\left| \det\left( 1 - \TO_{s,\lambda}^{2N(T,\wt X)} \right)\right| \leq c \dcov(\wt X,X) e^{-\eta(\sigma_0) \ell_0(\wt X)} \langle T\rangle^{\delta-\eta(\sigma_0)}  
\]
with
\[
 N(T,\wt X) = \left\lfloor \eps_0\left( \ell_0(\wt X) + \log\langle T\rangle \right) + N_0 +1 \right\rfloor,
\]
where $\lambda\sceq\Ind_{\wt\Gamma}^\Gamma\trivrep_{\wt\Gamma}$ denotes the representation of $\Gamma$ that is induced by the trivial character $\trivrep_{\wt\Gamma}$ of $\wt\Gamma$.
\end{prop}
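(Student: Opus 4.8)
The plan is to bound the Fredholm determinant $\det(1-\TO_{s,\lambda}^{2N})$ directly via the trace-norm expansion of its logarithm, exploiting the explicit formula for the iterates $\TO_{s,\varrho}^{N}$ from Section~\ref{sec:refine} together with the vanishing mechanism of Lemma~\ref{lem:vanishing}. Write $N=N(T,\wt X)$ and consider the operator $A\sceq\TO_{s,\lambda}^{2N}$ acting on $H^2(\mc E(h);V)$ for a suitable $h=h(T)$ to be chosen. Since for $\Rea s$ large one has $\log\det(1-A)=-\sum_{n\geq1}\tfrac1n\Tr A^{n}$, and since the left-hand side of the asserted inequality is harmonic in $s$ on the region of interest, it suffices by a Phragm\'en--Lindel\"of / subharmonicity argument to obtain the bound on a vertical boundary segment $\{\Rea s=\sigma_0\}$, where we may use the trace expansion after passing to a truncated (finite-rank) comparison if necessary; alternatively one bounds $\log|\det(1-A)|\le\|A\|_1$ (up to harmless constants once $\|A\|_1$ is small, which will hold for our choice of $N$) and works with $\|A\|_1$ throughout. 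The first genuine step is therefore: estimate $\|\TO_{s,\lambda}^{2N}\|_1$ on $H^2(\mc E(h);V)$.

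Next I would decompose $\TO_{s,\lambda}^{2N}=\sum_{j}1_{\mc D_j}\sum_{\alpha\in\mc W_{2N}^j}\nu_s(\gamma_\alpha)$ and group the singular-value contribution as in \cite{Guillope_Lin_Zworski}: each branch $\nu_s(\gamma_\alpha)$ is a weighted composition operator by a contraction $\gamma_\alpha^{-1}$, and the key point (following Guillop\'e--Lin--Zworski) is that when many of the maps $\gamma_\alpha^{-1}$ have nearby images, the corresponding rank-one-ish pieces overlap and the trace norm does \emph{not} grow like the number of words $\#\mc W_{2N}^j\asymp e^{c N}$ but rather like the number of \emph{well-separated} branches, which at scale $h$ is $O(h^{-\delta})\asymp N(h)$. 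Here is where the twist enters crucially: in the trace of $\TO_{s,\lambda}^{2N}$ (or in the Hilbert--Schmidt pairing producing the trace norm) the matrix coefficient $\chi_\lambda(\gamma_\alpha\gamma_\beta^{-1})$ appears, and Lemma~\ref{lem:vanishing} tells us that for $2N\le\eps_0(\ell_0(\wt X)+\log h^{-1})$ the only pairs $(\alpha,\beta)$ with $\gamma_\alpha^{-1}.z$ close to $\gamma_\beta^{-1}.z$ \emph{and} nonzero character are the diagonal $\alpha=\beta$. This is exactly why we take the $2N$-th power and why $N$ is chosen of the stated size: it collapses the off-diagonal interaction terms. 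Combining with Proposition~\ref{last_bound} (applied at $\Rea s=\sigma_0$, contributing the factor $\dcov(\wt X,X)e^{-\sigma_0\ell_0(\wt X)}$ roughly, sharpened via the prime geodesic bound to involve $\delta$) and the $N(h)=O(h^{-\delta})$ bound, one gets an estimate of the shape $\dcov(\wt X,X)\cdot e^{-\eta(\sigma_0)\ell_0(\wt X)}\cdot h^{-\delta+(\text{gain})}\cdot(\text{poly in }N)$, where the gain in the exponent of $h$ comes from trading the word-count growth $e^{cN}$ against the decay $e^{-\Rea s\cdot(\text{length})}$ along the orbit, and produces the extra concave increasing function $\eta$ via a standard Legendre-transform / optimization over the decomposition scale. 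Choosing $h^{-1}\asymp\langle T\rangle$ converts the $h^{-\delta+\eta(\sigma_0)}$ into $\langle T\rangle^{\delta-\eta(\sigma_0)}$ and forces $N(T,\wt X)\asymp\ell_0(\wt X)+\log\langle T\rangle$ to stay within the range allowed by Lemma~\ref{lem:vanishing}, explaining the definition of $N(T,\wt X)$ with the floor, $N_0$, and $\eps_0$.

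The concavity, monotonicity, and the zero at $\delta/2$ of $\eta$ should come out of the optimization: one is balancing a term like $e^{-(\Rea s-\delta)\ell_0}$ (decaying only when $\Rea s>\delta$, but we want a uniform statement for $\Rea s>\delta/2$) against the combinatorial factor, and the relevant pressure-type function is known (from \cite{Naud_inventiones}, \cite{JN}) to be strictly concave and increasing with the critical crossing at $\delta/2$; I would cite Naud's construction of the analogous function and verify the bookkeeping goes through verbatim with the covering degree and $\ell_0(\wt X)$ inserted as passive parameters, since all the dynamical estimates (the contraction rates, the distortion bounds, the separation constant $c,\varrho$ from \cite[Lemma~4.4]{JN}) depend only on $X$. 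The continuous dependence of $c$ on $T_0$ is immediate once everything is written as finitely many integrals/sums with parameters.

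The main obstacle I expect is the trace-norm bound on $H^2(\mc E(h);V)$ with \emph{uniform} constants over all covers $\wt X$ and over the scale $h$ simultaneously: one must carefully track that the overlap/almost-orthogonality estimates for the branches $\nu_s(\gamma_\alpha)$ (singular value decay of the type $\mu_k\lesssim e^{c|s|-c'k}$, but now for $\TO_{s,\lambda}^{2N}$ on the refined space) are not spoiled by the twist and not spoiled by letting $N$ grow with $T$ and $\ell_0(\wt X)$. Concretely, the delicate point is proving that the $2N$-fold iterate still defines a trace-class operator on $H^2(\mc E(h);V)$ with a trace norm that I can estimate term-by-term after Lemma~\ref{lem:vanishing} kills the cross terms — i.e.\ reconciling "$N$ large" (needed for the Lemma to have teeth) with "$N$ not too large" (needed for convergence of $\sum_n\tfrac1n|\Tr A^n|$ and for $A$ to be a genuine operator on the refined Hilbert space, which requires $2N\ge N_1$ but also uses contractivity); this is exactly the tension encoded in the two-sided constraint $N_0\le N(T,\wt X)\le\eps_0(\ell_0(\wt X)+\log\langle T\rangle)+N_0+1$, and getting all the implied constants to line up is the crux of the argument.
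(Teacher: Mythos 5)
Your outline is in the right spirit, and the key mechanisms you identify (Lemma~\ref{lem:vanishing} killing cross terms, the bound $N(h)=O(h^{-\delta})$, the pressure bound from \cite{Naud_inventiones}, and the choice $h^{-1}\asymp\langle T\rangle$) are exactly the ones the paper uses. But several points in the execution would need repair, and one is a genuine gap.

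First, the Phragm\'en--Lindel\"of/subharmonicity reduction to a single vertical boundary segment is a dead end: $\log|\det(1-\TO_{s,\lambda}^{2N})|$ is subharmonic, so a maximum principle would require control on \emph{all} sides of the box, not just $\{\Rea s=\sigma_0\}$, and on the other three sides you have no a~priori bound at this stage. The paper simply does not do this; the estimate is carried out directly for all $s$ in $(\sigma_0,\sigma_1)+i(T-T_0,T+T_0)$. Second, the bound $\log|\det(1-A)|\leq\|A\|_1$ holds unconditionally for trace-class $A$ (it is $|\det(1-A)|\leq\prod_k(1+\mu_k(A))\leq e^{\|A\|_1}$), so no smallness of $\|A\|_1$ is required; and the trace series $\sum_n\tfrac1n\Tr A^n$, which you invoke as a constraint, is never used here --- it would only converge for $\Rea s>\delta$, whereas the statement needs $\Rea s$ down to $\sigma_0<\delta$. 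The upper bound on $N$ comes solely from the hypothesis of Lemma~\ref{lem:vanishing}, not from convergence of any series.

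The central step you treat vaguely is actually the crux: one passes from $\|\TO_{s,\lambda}^{2N}\|_1$ to $\|\TO_{s,\lambda}^{N}\|_{\HS}^2$ (this is why the exponent is $2N$), and then the Hilbert--Schmidt norm is computed \emph{explicitly} using the Hilbert basis $\psi_{k,p,q}=\varphi_{p,q}\,e_k$ of $H^2(\mc E(h);V)$ built from monomials on each disk $\mc E_p(h)$. It is in the expansion
\[
\|\TO_{s,\lambda}^N\|_\HS^2=\sum_{q,p,k}\|\TO_{s,\lambda}^N\psi_{k,p,q}\|^2
\]
that summing over $k$ produces the character $\chi_\lambda(\gamma_\alpha\gamma_\beta^{-1})$, Lemma~\ref{lem:vanishing} collapses the double sum over $\alpha,\beta\in\mc W_N^j$ (note: the lemma is applied at exponent $N$, not $2N$) to the diagonal $\alpha=\beta$ with factor $\dim\lambda=\dcov(\wt X,X)$, and summing over $q$ reconstitutes the Bergman kernel $B_{\mc E_v}$, which can be bounded by $c\,h^{-2}$ inside the shrunken disks guaranteed by \cite[Lemma~3.2]{Naud_inventiones}. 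Your suggestion to ``group the singular-value contributions as in \cite{Guillope_Lin_Zworski}'' is a related but different technique; the paper avoids singular-value decay for the refined space and instead relies on the closed-form Bergman kernel. Finally, Proposition~\ref{last_bound} is \emph{not} used in this proof: it is the companion estimate for the lower half of Titchmarsh's Number of Zeros Theorem (at the reference point $z_0$ with $\Rea z_0>\delta$) in the proof of Theorem~\ref{thm:box}, whereas the present proposition supplies the upper half (the growth bound on the circle $|s-z_0|=r_2$). Keeping those two ingredients separate is what produces the two functions $\tau_1,\tau_2$.
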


\begin{proof}
Throughout let 
\[
 d\sceq \dim\lambda = \dcov(\wt X,X) = [\Gamma : \wt\Gamma].
\]
and let $V$ be the $d$-dimensional unitary vector space on which $\lambda$ represents $\Gamma$.  

In this proof we consider the iterates of the transfer operator $\TO_{s,\lambda}$ as an operator on the Hilbert space $H^2(\mc E(h);V)$ for a specific $h$, approximately of size $\langle T\rangle^{-1}$. To be more precise, we fix some parameters:
\begin{itemize}
\item  Recall the parameter $h_0>0$ from Section~\ref{sec:refine}. We fix $C>0$ such that for all $h\in (0,h_0)$ and all $p\in\{1,\ldots, N(h)\}$ we have $\diam \mc E_p(h)<Ch$.
\item Depending on the choice of $C$, we fix $h_1=h_1(C) \in (0,h_0)$ and $\eps_0 = \eps_0(C)>0$ such that the conclusions of Lemma~\ref{lem:vanishing} are valid. 
\item For all $h\in (0,h_1)$ and $j\in\{1,\ldots, 2m\}$ let 
\[
 \mc P_j(h)\sceq \{ p\in\{1,\ldots, N(h)\} \colon \mc E_p(h)\subseteq \mc D_j\}.
\]
By \cite[Lemma~3.2]{Naud_inventiones} we find $N_0\in\N$ such that for all $N>N_0$, for all $h\in (0,h_1)$, all $j\in\{1,\ldots,2m\}$, all $\alpha\in\mc W_N^j$, all $p\in\mc P_j(h)$ there exists a unique $q\in\{1,\ldots, N(h)\}$ such that 
\begin{equation}\label{contained}
\gamma_\alpha^{-1}\big(\mc E_p(h)\big) \subseteq \mc E_q(h)\quad\text{and}\quad d\big(\gamma_\alpha^{-1}\big(\mc E_p(h)\big), \partial\mc E_q(h)\big)\geq \frac{h}{2}. 
\end{equation}
Recall the number $N_1\in\N$ from Section~\ref{sec:refine}. We fix $N_0$ such that 
\[
 e^{-(N_0+1)/\eps_0} < h_1
\]
and $N_0>N_1$.
\item Let $T\in\RR$. We set $h\sceq e^{-(N_0+1)/\eps_0}\langle T\rangle^{-1}$.
\item We set $N \sceq N_{(\wt X, T)}\sceq \left\lfloor \eps_0\left( \ell_0(\wt X) + \log h^{-1}\right) \right\rfloor$. Note that
\[
 N \geq \eps_0\log h^{-1} -1 \geq N_0.
\]
\end{itemize}

By \cite[Lemma~3.3]{Simon} and the relation between the trace norm and the Hilbert-Schmidt norm (denoted by $\|\cdot\|_\HS$), for all $s\in\CC$ we find
\begin{equation}\label{HSest}
 \log \left| \det\left( 1 - \TO_{s,\lambda}^{2N}\right)\right| \leq \left\| \TO_{s,\lambda}^{2N}\right\|_1  \leq \left\| \TO_{s,\lambda}^N\right\|_\HS^2.
\end{equation}
In order to find an upper estimate for the Hilbert-Schmidt norm of $\TO_{s,\lambda}^N$ we take advantage of an explicit well-balanced Hilbert basis for the Hilbert Bergman space $H^2(\mc E(h);\CC)$. In order to state this Hilbert basis we fix an orthonormal basis $e_1,\ldots, e_d$ of $V$. 

For $p\in\{1,\ldots, N(h)\}$ let $r_p\sceq r_p(h)$ and $c_p\sceq c_p(h)$ denote the radius and center of $\mc E_p=\mc E_p(h)$, respectively. For $q\in\N_0$ set
\[
 \kappa_{p,q}\sceq \kappa_{p,q}^{(h)}\colon \mc E_p \to \CC, \quad \kappa_{p,q}(z)\sceq \sqrt{\frac{q+1}{\pi r_p^2}} \left(\frac{z-c_p}{r_p}\right)^q.
\]
Then $\{ \kappa_{p,q}\}_{q\in\N_0}$ is a Hilbert basis for $H^2(\mc E_p;\CC)$. We extend each function $\kappa_{p,q}$ to a function $\varphi_{p,q}\colon \mc E\to \CC$ by
\[
\varphi_{p,q}(z) \sceq 
\begin{cases}
\kappa_{p,q}(z) & \text{for $z\in \mc E_p$}
\\
0 & \text{otherwise.}
\end{cases}
\]
For $k\in\{1,\ldots, d\}$, $p\in\{1,\ldots,N(h)\}$, $q\in\N_0$ define $\psi_{k,p,q}\sceq \psi_{k,p,q}^{(h)}\colon \mc E\to V$ by
\[
\psi_{k,p,q}(z) \sceq \varphi_{p,q}(z)e_k
\]
Then $\{ \psi_{k,p,q} \colon 1\leq k\leq d,\ 1\leq p\leq N(h),\ q\in\N_0\}$ is a Hilbert basis for $H^2(\mc E;V)$.

In turn,
\begin{equation}\label{TOHSnorm}
 \left\| \TO_{s,\lambda}^N \right\|^2_{\HS} = \sum_{q\in\N_0} \sum_{p=1}^{N(h)} \sum_{k=1}^d \left\| \TO_{s,\lambda}^N \psi_{k,p,q}\right\|^2.
\end{equation}
In what follows, we evaluate step by step the right hand side of \eqref{TOHSnorm}, proceeding from the most inner norm to the final outer series.

Let $k\in\{1,\ldots, d\}$, $p\in\{1,\ldots, N(h)\}$, $q\in\N_0$. Then we have
\begin{align*}
\left\| \TO_{s,\lambda}^N \psi_{k,p,q} \right\|^2 & = \int_{\mc E} \left| \TO_{s,\lambda}^N \psi_{k,p,q}(z)\right|^2 \, \dvol(z)
\\
& = \sum_{j=1}^{2m} \int_{\mc E\cap \mc D_j} \sum_{\alpha,\beta\in \mc W_N^j} \left\langle \nu_s(\gamma_\alpha)\psi_{k,p,q}(z), \nu_s(\gamma_\beta)\psi_{k,p,q}(z) \right\rangle\, \dvol(z)
\\
& = \sum_{j=1}^{2m} \sum_{\alpha,\beta\in\mc W_N^j} \langle \lambda(\gamma_\alpha)e_k, \lambda(\gamma_\beta)e_k\rangle 
\\
& \quad\times \int_{\mc E\cap \mc D_j} \left( \big(\gamma_{\alpha}^{-1}\big)'(z) \right)^s \overline{\left( \big(\gamma_{\beta}^{-1}\big)'(z) \right)^s} \varphi_{p,q}\big(\gamma_{\alpha}^{-1}.z\big) \overline{\varphi_{p,q}\big(\gamma_\beta^{-1}.z\big)}  \,\dvol(z).
\end{align*}

Let 
\[
 \chi_\lambda(\gamma)\sceq \Tr\lambda(\gamma)
\]
for $\gamma\in\Gamma$. In order to evaluate the sum over $k$ in \eqref{TOHSnorm}, we note that 
\[
 \sum_{k=1}^d \left\langle \lambda\big(\gamma_\alpha\big)e_k, \lambda\big(\gamma_\beta\big)e_k\right\rangle = \Tr \lambda\big(\gamma_\alpha\gamma_\beta^{-1}\big) = \chi_\lambda\big(\gamma_\alpha\gamma_\beta^{-1}\big)
\]
for all $\alpha,\beta\in\mc W_N$. Thus
\begin{align}
\sum_{k=1}^d \left\| \TO_{s,\lambda}^N \psi_{k,p,q}\right\|^2 \nonumber
 & = \sum_{j=1}^{2m} \sum_{\alpha,\beta\in\mc W_N^j} \chi_\lambda\big(\gamma_\alpha\gamma_\beta^{-1}\big)  
 \\
 &\quad \times \int_{\mc E\cap \mc D_j} \left( \big(\gamma_{\alpha}^{-1}\big)'(z) \right)^s \overline{\left( \big(\gamma_{\beta}^{-1}\big)'(z) \right)^s} \varphi_{p,q}\big(\gamma_{\alpha}^{-1}.z\big) \overline{\varphi_{p,q}\big(\gamma_\beta^{-1}.z\big)}  \,\dvol(z). 
  \label{TOab}
\end{align}
Lemma~\ref{lem:vanishing} implies that in \eqref{TOab} only the summands with $\alpha=\beta$ contribute. Hence
\begin{align*}
  \sum_{k=1}^d \left\| \TO_{s,\lambda}^N \psi_{k,p,q}\right\|^2 & = d \sum_{j=1}^{2m} \sum_{\alpha\in\mc W_N^j} \int_{\mc E\cap \mc D_j} \left|\left( \big(\gamma_{\alpha}^{-1}\big)'(z) \right)^s\right|^2  \left|\varphi_{p,q}\big(\gamma_{\alpha}^{-1}.z\big)\right|^2 \, \dvol(z).
\end{align*}
For $j\in\{1,\ldots, 2m\}$, $\alpha\in \mc W_N^j$, $u\in \mc P_j$ let $v=v(u,\alpha) \in \{1,\ldots, N(h)\}$ be the unique element such that $\gamma_\alpha^{-1}(\mc E_u)\subseteq \mc E_v$. Then 
\begin{align*}
 \sum_{p=1}^{N(h)}\sum_{k=1}^d \left\| \TO_{s,\lambda}^N \psi_{k,p,q}\right\|^2 & =
 d\sum_{j=1}^{2m} \sum_{\alpha\in\mc W_N^j} \sum_{u\in\mc P_j} \sum_{p=1}^{N(h)} \int_{\mc E_u} \left|\left( \big(\gamma_{\alpha}^{-1}\big)'(z) \right)^s\right|^2  \left|\varphi_{p,q}\big(\gamma_{\alpha}^{-1}.z\big)\right|^2 \,\dvol(z)
 \\
 & = d \sum_{j=1}^{2m} \sum_{\alpha\in\mc W_N^j} \sum_{u\in\mc P_j} \int_{\mc E_u} \left|\left( \big(\gamma_{\alpha}^{-1}\big)'(z) \right)^s\right|^2  \left|\varphi_{v(u,\alpha),q}\big(\gamma_{\alpha}^{-1}.z\big)\right|^2 \,\dvol(z).
\end{align*}
To evaluate the final outer series in \eqref{TOHSnorm} we use that, for each $v\in\{1,\ldots, N(h)\}$, the series 
\[
 \sum_{q\in\N_0} \varphi_{v,q}\overline\varphi_{v,q}
\]
converges compactly to the Bergman kernel $B_{\mc E_v}$ of $\mc E_v$, and that due to the specific shape of $\mc E_v$ (a complex ball), the Bergman kernel $B_{\mc E_v}$ is given by a rather easy explicit formula (which in this case also follows from a straightforward calculation).  More precisely, for all $(z,w)\in \mc E\times\mc E$ we have
\begin{equation}\label{bergmankernel}
 \sum_{q\in\N_0} \varphi_{v,q}(z)\overline{\varphi_{v,q}(w)} = B_{\mc E_v}(z,w) = \frac{r_v^2}{\pi\left( r_v^2 - (w-c_v)(\overline{z}-c_v)\right)^2}.
\end{equation}
Hence
\begin{align}
 \left\| \TO_{s,\lambda}^N \right\|^2_{\HS} & =
 \sum_{q\in\N_0} \sum_{p=1}^{N(h)} \sum_{k=1}^d \left\| \TO_{s,\lambda}^N \psi_{k,p,q}\right\|^2 \nonumber
 \\
 & = d \sum_{j=1}^{2m} \sum_{\alpha\in\mc W_N^j} \sum_{u\in\mc P_j} \int_{\mc E_u} \left|\left( \big(\gamma_{\alpha}^{-1}\big)'(z) \right)^s\right|^2  B_{\mc E_{v(u,\alpha)}}\left(\gamma_\alpha^{-1}.z, \gamma_\alpha^{-1}.z\right) \,\dvol(z). \label{TO_berg}
\end{align}
For all $j\in \{1,\ldots, 2m\}$, all $\alpha\in\mc W_N^j$, $u\in\mc P_j$, $z\in \mc E_u$, the combination of \eqref{bergmankernel} with \eqref{contained} yields that 
\begin{equation}\label{est_bergman}
 \left| B_{\mc E_{v(u,\alpha)}}\left(\gamma_\alpha^{-1}.z, \gamma_\alpha^{-1}.z\right)\right| \leq c_1 h^{-2}
\end{equation}
where
\[
 c_1 \sceq \frac{16}{\pi}C
\]
depends on $X$ only. 

From now let $\sigma_1>\sigma_0\geq 0$ and $T_0\in\RR$ be fixed, and set
\[
 D\sceq (\sigma_0,\sigma_1) + i(T-T_0, T+T_0).
\]
By \cite[below (11)]{Naud_inventiones} there exists $c_2 = c_2(\sigma_0,\sigma_1, T_0)>0$ such that for all $j\in\{1,\ldots, 2m\}$ and all $s\in D$ we have
\begin{equation}\label{est_intermediate}
 \sup\left\{ \left| \left( \left(\gamma_\alpha^{-1}\right)'(z)\right)^s \right| : \alpha\in \mc W_N^j,\ z\in \mc E_u,\ \mc E_u\subseteq \mc D_j \right\} \leq c_2 \sup_{x\in I_j} \left( \left(\gamma_\alpha^{-1}\right)'(x) \right)^{\Rea s}.
\end{equation}
In \cite{Naud_inventiones} this estimate is shown for the case that $|\Ima s|=h^{-1}$. In this case, the constant $c_2$ is independent of $h$ (and hence of $T$). Any continuous perturbation of $T$ then results in a continuous perturbation of $c_2$. Thus, applied to all $s\in D$, the constant $c_2$ remains independent of $T$ but depends (continuously) on $T_0$.

Using \eqref{est_bergman} and \eqref{est_intermediate} in \eqref{TO_berg} we get for all $s\in D$,
\begin{equation}\label{TO_est}
 \left\| \TO_{s,\lambda}^N \right\|^2_{\HS} \leq c_3 d h^{-2} \sum_{j=1}^{2m} \sum_{\alpha\in\mc W_N^j} \sup_{x\in I_j} \left( \left(\gamma_\alpha^{-1}\right)'(x) \right)^{ 2 \Rea s} \sum_{u\in\mc P_j}\int_{\mc E_u} 1 \, \dvol(z)
\end{equation}
for a constant $c_3>0$ with the same dependencies as $c_2$. From \cite{Sullivan} (see also \cite[Section~5]{Guillope_Lin_Zworski} or \cite[Proof of Theorem~15.12]{Borthwick_book}) it follows that 
\[
 \#\mc P_j \leq N(h) \leq c_4 h^{-\delta}
\]
for some constant $c_4>0$ depending on $X$ only. Further, for any $u\in \{1,\ldots, N(h)\}$, 
\[
 \int_{\mc E_u} 1 \, \dvol(z) \leq \pi \left( \frac{Ch}{2}\right)^{2}.
\]
By \cite[Lemma~3.1]{Naud_inventiones} there exists a map $p\colon\RR \to \RR$ that is strictly convex, strictly decreasing, and has a unique zero which is precisely $\delta$ and a constant $c_5=c_5(\sigma_0,\sigma_1)$ such that for all $s\in\RR$ with $\Rea s\in (\sigma_0,\sigma_1)$ we have
\begin{equation}\label{est_global}
 \sum_{j=1}^{2m} \sum_{\alpha\in\mc W_N^j} \sup_{x\in I_j} \left( \left(\gamma_\alpha^{-1}\right)'(x) \right)^{\Rea s} \leq c_5 e^{Np(\sigma_0)}.
\end{equation}
The function $p$ is a rescaled variant of the topological pressure of the discrete dynamical system that gives rise to the transfer operator $\TO_s$. We refer to \cite{Naud_inventiones} for more details.

Using these estimates in \eqref{TO_est} we get for all $s\in\CC$, $\Rea s\in (\sigma_0,\sigma_1)$, 
\[
 \left\| \TO_{s,\lambda}^N \right\|^2_{\HS} \leq c_6 d h^{-\delta} e^{Np(2\sigma_0)}
\]
where $c_6$ depends on $\sigma_0,\sigma_1, T_0$ and $X$ only, and the dependence on $T_0$ is continuous. 

Inserting the values for $h$ and $N$ as defined in the beginning of this proof, using $d=\dcov(\wt X,X)$, and combining with \eqref{HSest} completes the proof.
\end{proof}

\subsection{Proof of Theorem~\ref{thm:box}}\label{sec:finalproof}
If $X$ is an elementary Schottky surface, hence a hyperbolic cylinder, then the statement of Theorem~\ref{thm:box} is obviously true and the obtained bound on the number of resonances is sharp because for each finite cover $\wt X$ of $X$, both $M_{\wt X}(\sigma, T)$ and $\zvol(\wt X)$ vanish.   

Suppose that $X$ is non-elementary and let $\sigma>\delta/2$. Further let $\wt X$ be a finite cover of $X$ and let $T\in\RR$. The value of $M_{\wt X}(\sigma,T)$ that we seek to estimate is the number of resonances in the rectangle 
\[
 R(\sigma,T) \sceq [\sigma, \delta] + i[T-1,T+1].
\]
We use Titchmarsh's Number of Zeros Theorem to provide such an estimate. To that end let
\[
 z_0\sceq 2 + iT
\]
and fix $r_2>r_1>0$ such that 
\[
 R(\sigma, T)\subseteq \overline B(z_0;r_1)
\]
and $2-r_2 > \delta/2$, thus
\[
 \overline B(z_0;r_2) \subseteq \{ z\in\CC : \Rea z> \delta/2\}.
\]
Note that the choice of $r_1,r_2$ may depend on $\sigma$ but it is independent of $T$.

Further let $N(T,\wt X)$ be as in Proposition~\ref{prop:NTX} and set $f\colon \CC\to \CC$, 
\[
 f(s) \sceq \det\left( 1 - \TO_{s,\lambda}^{2 N(T,\wt X)}\right).
\]
Then 
\begin{align*}
 M_{\wt X}(\sigma,T) \leq \#\{ s\in \mc R(\wt X) : s\in \overline B(z_0;r_1),\ f(s) = 0 \}.
\end{align*}
Titchmarsh's Number of Zeros Theorem yields
\begin{align}\label{box_est}
 M_{\wt X}(\sigma,T) \leq \frac{1}{\log(r_2/r_1)} \left( \log \max_{|s-z_0|=r_2} |f(s)| - \log |f(z_0)| \right).
\end{align}
Since $\Rea z_0=2>\delta$ we can use Proposition~\ref{last_bound} to estimate the second summand in \eqref{box_est}. To estimate the first summand, we use Proposition~\ref{prop:NTX} with $\sigma_0\sceq 2-r_2$, $\sigma_1\sceq 2+r_2$, $T_0=r_2$. Thus, there is a function $\eta\colon\RR\to\RR$ with the properties as stated in Proposition~\ref{prop:NTX} and constants $c_1,c_2>0$ depending on $\sigma$ (and $X$) only such that 
\begin{align*}
M_{\wt X}(\sigma,T) \leq c_1 \zvol(\wt X) e^{-\eta(\sigma_0)\ell_0(\wt X)} \langle T\rangle^{\delta-\eta(\sigma_0)} + c_2 \zvol(\wt X) N(T,\wt X) e^{-(2-\delta)\ell_0(\wt X)}.
\end{align*}
Recall from Proposition~\ref{prop:NTX} that 
\[
 N(T,\wt X) \approx c_3\ell_0(\wt X) + c_4 \log\langle T\rangle + c_5
\]
for certain constants $c_3,c_4,c_5>0$ depending on $X$ only. Since 
\[
0\leq \log\langle T\rangle \leq c_\eps \langle T\rangle^\eps
\]
for all $\eps>0$, and 
\[
 \ell_0(\wt X) e^{-\frac12\ell_0(\wt X)}
\]
is bounded as $\ell_0(\wt X)\to\infty$, we find $\tau_1(\sigma)>0$, $\tau_2(\sigma)\in (0,\delta)$, $c>0$ depending on $\sigma$ and $X$ only such that 
\[
 M_{\wt X}(\sigma, T) \leq c \zvol(\wt X) e^{-\tau_1(\sigma)\ell_0(\wt X)} \langle T\rangle^{\delta-\tau_2(\sigma)}.
\]
Due to the properties of $\eta$, the functions $\tau_j\colon\sigma\to\tau_j(\sigma)$ ($j=1,2$) can be chosen as stated in Theorem~\ref{thm:box}. This completes the proof of Theorem~\ref{thm:box}.

\section{Examples for covers with long shortest geodesics}\label{sec:eigen}

In this section we provide additional examples of sequences of finite covers of integral Schottky surfaces along which an analogue of \eqref{JNgrowth} holds. The key tool for these results is Theorem~\ref{thm:box} in combination with a study of the length of minimal periodic geodesics in relation to the covering degree. More precisely, we seek to find towers of coverings $ (X_j) $ whose minimal lengths $ \ell_0(X_j) $ grow logarithmically in $ \dcov(X_j, X) $ as $ j\to \infty $. 

We note that along sequences of abelian covers such growth behavior is impossible, see \cite{JNS}. For completeness, we recall that a cover $\wt X = \wt\Gamma\backslash\hh$ of $X=\Gamma\backslash\hh$ is called \textit{abelian} if $\wt\Gamma$ is a finite normal subgroup of $\Gamma$, and the quotient group $\Gamma/\wt\Gamma$ is abelian. Hence, when trying to produce covers with large minimal geodesic length, this suggests that we should look for highly non-abelian covers. As it turns out, families of `congruence' covers are good candidates. 

Let $X=\Gamma\backslash\hh$ be an integral Schottky surface, thus, $\Gamma\subseteq \SL_2(\Z)$. We consider $X$ and $\Gamma$ to be fixed throughout this section. For $q\in\N$ let (allowing a slight abuse of notation for notational convenience)
\begin{align*}
 \Gamma_0(q) &\sceq \left\{ g\in\Gamma : g \equiv \textmat{*}{*}{0}{*} \mod q \right\},
 \\
 \Gamma_1(q) &\sceq \left\{ g\in\Gamma : g \equiv \textmat{1}{*}{0}{1} \mod q \right\},
 \\
 \Gamma_2(q) &\sceq \left\{ g\in\Gamma : g \equiv \textmat{1}{0}{0}{1} \mod q \right\} = \Gamma(q).
\end{align*}
For $j\in\{0,1,2\}$, we set 
\[
 X_j(q) \sceq \Gamma_j(q)\backslash\hh.
\]
As shown in Proposition~\ref{prop:eigengrowth} below, along any sequence $(X_q)_{q\in\N}$ of covers of $X$ sandwiched between $(X_0(q))_{q\in\N}$ and $(X_2(q))_{q\in\N}$, an analogue of~\eqref{JNgrowth} holds. The sequence $(X_1(q))_{q\in\N}$ is one such example. For the sequence $(X_2(q))_q$, Proposition~\ref{prop:eigengrowth} recovers the result by Jakobson--Naud. 

\begin{prop}\label{prop:eigengrowth}
For each $q\in\N$ let $\Gamma_q$ be a Schottky group such that $\Gamma_2(q)\subseteq \Gamma_q\subseteq \Gamma_0(q)$ and set $X_q\sceq \Gamma_q\backslash\hh$. Then there exists functions $\alpha, \beta\colon\RR\to\RR$ that are strictly concave, increasing, and positive on $(\delta/2,\delta]$ such that for each $\sigma>\delta/2$ there exists $C>0$ such that for all $T\geq 1$ and all $ q\in\N $ (not necessarily prime), we have
\begin{equation}
 M_{X_q}(\sigma, T) \leq C [\Gamma : \Gamma_q]^{1-\alpha(\sigma)} \langle T\rangle^{\delta-\beta(\sigma)}.
\end{equation}
In particular, there exists $\alpha>0$ such that the number of $ L^{2} $-eigenvalues satisfies
\[
 \#\Omega(X_q) = O\big([\Gamma:\Gamma_q]^{1-\alpha}\big)\qquad\text{as $q\to\infty$.}
\]
\end{prop}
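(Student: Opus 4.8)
The plan is to apply Theorem~\ref{thm:box} to the base surface $X$ and to each cover $X_q=\Gamma_q\backslash\hh$, and then to convert the factor $e^{-\tau_1(\sigma)\ell_0(X_q)}$ into a genuine power saving in the covering degree, exploiting that the congruence covers in question have systole growing like $\log q$. Fix $\sigma>\delta/2$. Theorem~\ref{thm:box} provides strictly concave, strictly increasing functions $\tau_1,\tau_2$ that are positive on $(\delta/2,\delta]$, and a constant $C_0=C_0(\sigma)>0$, such that
\[
 M_{X_q}(\sigma,T)\leq C_0\,\zvol(X_q)\,e^{-\tau_1(\sigma)\ell_0(X_q)}\,\langle T\rangle^{\delta-\tau_2(\sigma)}
\]
for all $T\in\RR$ and all $q\in\N$. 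Since $\zvol(X_q)=\dcov(X_q,X)\zvol(X)=[\Gamma:\Gamma_q]\zvol(X)$ with $\zvol(X)$ a fixed positive constant, and since $[\Gamma:\Gamma_q]\leq[\Gamma:\Gamma(q)]\leq|\SL_2(\Z/q\Z)|\ll q^3$, everything comes down to a lower bound $\ell_0(X_q)\gg\log q$, equivalently $\ell_0(X_q)\geq c\log[\Gamma:\Gamma_q]-C$ for constants $c>0$, $C$ depending only on $X$.

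The heart of the proof is this systole bound. It rests on an arithmetic observation. Any $g\in\Gamma_q\setminus\{\id\}$ is hyperbolic, since Schottky groups have neither elliptic nor parabolic elements, so $|\Tr g|>2$; on the other hand $g$ reduces modulo $q$ to an element of the image of $\Gamma_0(q)$, which is upper triangular. In the principal case $\Gamma_q\subseteq\Gamma_1(q)$ — which already covers $\Gamma_2(q)=\Gamma(q)$ and the distinguished example $X_1(q)$ — this reduction is unipotent up to sign, so $(\Tr g)^2\equiv4\pmod q$; combined with $\Tr g\neq\pm2$ this forces $(\Tr g)^2-4$ to be a nonzero multiple of $q$, whence $|\Tr g|\geq\sqrt q$ and
\[
 \ell(g)=2\log\frac{|\Tr g|+\sqrt{(\Tr g)^2-4}}{2}\geq 2\log\frac{|\Tr g|}{2}\geq\log q-2\log 2 .
\]
Minimising over $g$ gives $\ell_0(X_q)\geq\log q-2\log2$, hence (via $\log[\Gamma:\Gamma_q]\leq3\log q$) the bound $\ell_0(X_q)\geq\tfrac13\log[\Gamma:\Gamma_q]-2\log2$. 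For $\Gamma_q$ in the full range up to $\Gamma_0(q)$ one must instead run this estimate through the solvable quotient $\Gamma_0(q)/\Gamma(q)$ that $g$ factors through; making the systole bound uniform over the whole interval $\Gamma_2(q)\subseteq\Gamma_q\subseteq\Gamma_0(q)$ is, I expect, the one step that demands real care.

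Granting the systole bound, the remainder is bookkeeping. Substituting $\ell_0(X_q)\geq\tfrac13\log[\Gamma:\Gamma_q]-2\log2$ and $\zvol(X_q)=[\Gamma:\Gamma_q]\zvol(X)$ into the displayed inequality and absorbing the harmless constants into $C=C(\sigma)$ yields
\[
 M_{X_q}(\sigma,T)\leq C\,[\Gamma:\Gamma_q]^{1-\alpha(\sigma)}\,\langle T\rangle^{\delta-\beta(\sigma)},\qquad \alpha(\sigma):=\tfrac13\tau_1(\sigma),\quad \beta(\sigma):=\tau_2(\sigma),
\]
and $\alpha,\beta$ are then strictly concave, increasing and positive on $(\delta/2,\delta]$, as required. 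For the eigenvalue count I would specialise exactly as in the Introduction: applying Theorem~\ref{thm:box} at $T=0$ with some $\sigma\in(\delta/2,\tfrac12)$ gives $\#\Omega(X_q)\leq M_{X_q}(\sigma,0)\leq C'\,\zvol(X_q)\,e^{-\tau_1(\sigma)\ell_0(X_q)}$ (each $L^2$-eigenvalue $\lambda\in(0,\tfrac14)$ of $X_q$ yielding a resonance at $\tfrac12+\sqrt{\tfrac14-\lambda}$ with vanishing imaginary part), and feeding in the systole bound converts this into $\#\Omega(X_q)\ll[\Gamma:\Gamma_q]^{1-\alpha}$ with $\alpha=\tfrac13\tau_1(\sigma)>0$. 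If $\delta\leq\tfrac12$ then $\Omega(X_q)=\varnothing$ and the assertion is vacuous.
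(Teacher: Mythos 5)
Your high-level plan coincides exactly with the paper's: apply Theorem~\ref{thm:box} to each cover, use $\zvol(X_q)=[\Gamma:\Gamma_q]\zvol(X)$ and $[\Gamma:\Gamma_q]\leq|\SL_2(\Z/q\Z)|<q^3$, and reduce everything to a systole lower bound $\ell_0(X_q)\geq c_0\log q$. The bookkeeping you do granted that systole bound is correct, and your treatment of the eigenvalue count via $T=0$ is fine. The gap is precisely where you say it is: your arithmetic argument only establishes the systole bound for $\Gamma_q\subseteq\Gamma_1(q)$.

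The issue is that the trace-congruence route is the wrong tool for the general case, and there is no repair along the lines you sketch. For $g\in\Gamma_0(q)$ the reduction mod $q$ is merely upper-triangular, so $\Tr g\equiv a+a^{-1}\pmod q$ ranges over an unconstrained set; there is no congruence $(\Tr g)^2\equiv 4\pmod q$ to exploit, and ``running the estimate through the solvable quotient $\Gamma_0(q)/\Gamma(q)$'' does not produce one. The paper instead makes two observations you bypassed. First, $\Gamma_q\subseteq\Gamma_0(q)$ immediately gives $\ell_0(X_q)\geq\ell_0(X_0(q))$ (the length spectrum of a subgroup is contained in that of the ambient group), so it suffices to bound $\ell_0(X_0(q))$ — this dispenses with the whole ``uniform over the interval'' difficulty in one line. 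Second, for $g=\left(\begin{smallmatrix}a&b\\ c&d\end{smallmatrix}\right)\in\Gamma_0(q)$ the congruence information lives in the \emph{entry} $c$, not the trace: $c\equiv 0\pmod q$, and $c\neq 0$ because a $c=0$ integral matrix of determinant $1$ is $\pm\left(\begin{smallmatrix}1&*\\0&1\end{smallmatrix}\right)$, hence parabolic or $\pm\id$, which a Schottky group excludes; likewise $b\neq 0$, so $|bc|\geq q$. The paper then converts $|c|\geq q$ (equivalently $|ad|=|1+bc|\geq q-1$) into $|\Tr g|\gg\sqrt q$ and hence $\ell(g)\gg\log q$. (Alternatively, $\|g\|_F\geq|c|\geq q$, and one can compare Frobenius norm with displacement length via the convex-core/word-length argument of Lemma~\ref{word_vs_hyperbolic_length}.) In short: the missing step is to use the subgroup inclusion to reduce to $\Gamma_0(q)$ and then read the arithmetic off the $c$-entry rather than off the trace.
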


Proposition~\ref{prop:eigengrowth} follows immediately from Proposition~\ref{prop:actualproof} below in combination with Theorem~\ref{thm:box}. Before we discuss Proposition~\ref{prop:actualproof}, we present the following lemma on the growth of the covers.

\begin{lem}
For $j\in\{0,1,2\}$ we have
\[
 [\Gamma : \Gamma_j(q)] \asymp q^{j+1} \quad\text{as $q\to\infty$, $q$ prime.}
\]
\end{lem}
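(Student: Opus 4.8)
The plan is to reduce the computation of these indices to a counting problem inside $\SL_2$ over a finite field, via reduction modulo $q$. Let $q$ be prime and, to match the available notation, write $p=q$; let $\pi_p\colon\SL_2(\Z)\to\SL_2(\Fp)$ be the reduction map and set $G_p\sceq\pi_p(\Gamma)$. Since $\ker(\pi_p|_\Gamma)=\Gamma(p)=\Gamma_2(p)$ is contained in each of $\Gamma_0(p),\Gamma_1(p),\Gamma_2(p)$, the map $\pi_p$ induces a bijection $\Gamma/\Gamma_j(p)\xrightarrow{\ \sim\ }G_p/(G_p\cap B_j)$, where $B_0\subseteq\SL_2(\Fp)$ is the Borel subgroup of upper-triangular matrices, $B_1$ is its unipotent radical $\{\textbmat{1}{*}{0}{1}\}$, and $B_2=\{\id\}$. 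Hence $[\Gamma:\Gamma_j(q)]=[G_p:G_p\cap B_j]$, and everything comes down to understanding this index.

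First I would record the orders $|\SL_2(\Fp)|=p(p^2-1)$, $|B_0|=p(p-1)$, $|B_1|=p$, $|B_2|=1$, so that $[\SL_2(\Fp):B_j]$ equals $p+1$, $p^2-1$, $p(p^2-1)$ for $j=0,1,2$ respectively, which is $\asymp p^{j+1}$ in each case. The upper bound $[\Gamma:\Gamma_j(q)]\ll q^{j+1}$ is then immediate and valid for every prime $p$: the natural map $G_p/(G_p\cap B_j)\to\SL_2(\Fp)/B_j$ is injective, whence $[G_p:G_p\cap B_j]\le[\SL_2(\Fp):B_j]$.

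The real content is the matching lower bound, and for this I would invoke strong approximation. We may assume $X$ is non-elementary (otherwise $X$ is a hyperbolic cylinder and falls outside the scope of this section), so that $\Gamma$ is free of rank $\ge 2$, hence non-elementary as a Fuchsian group and therefore Zariski dense in $\SL_2$. By strong approximation for $\SL_2$ (classical in the $\SL_2$ case; in general form due to Matthews--Vaserstein--Weisfeiler) there is $q_0\in\N$ with $\pi_q(\Gamma)=\SL_2(\Z/q\Z)$ for all $q$ coprime to $q_0$; in particular $G_p=\SL_2(\Fp)$ for every prime $p\nmid q_0$, that is, for all sufficiently large primes. For such $p$ one gets the exact equality $[\Gamma:\Gamma_j(p)]=[\SL_2(\Fp):B_j]\asymp p^{j+1}$, which supplies both bounds simultaneously; the finitely many excluded primes are irrelevant for the asymptotic statement.

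I expect the only genuine obstacle to be organizational rather than mathematical: ensuring we are in the non-elementary regime, where $\Gamma$ is Zariski dense and hence reduction modulo large primes is surjective. Once strong approximation is in place, the remainder is the elementary bookkeeping with the orders of the subgroups $B_j\subseteq\SL_2(\Fp)$ listed above.
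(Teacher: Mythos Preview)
Your proof is correct and follows essentially the same route as the paper's: both reduce to computing indices in $\SL_2(\Fp)$ via the surjectivity of the reduction map $\Gamma\to\SL_2(\Fp)$ for all sufficiently large primes, then count the orders of the relevant subgroups. The only cosmetic differences are that the paper cites Gamburd for surjectivity while you invoke strong approximation directly, and you separate the upper and lower bounds (and make the non-elementary hypothesis explicit), whereas the paper goes straight to equality once surjectivity is in hand.
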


\begin{proof}
Let $q\in\N$ be prime. Let 
\[
 \pi_q \colon \Gamma\to \SL_2(\Z/q\Z),\quad g\mapsto g\mod q.
\]
For $j\in\{0,1,2\}$ we let $H_j(q)$ denote the subgroup of $\SL_2(\Z/q\Z)$ given by
\[
 H_0\sceq \left\{ \begin{pmatrix} * & * \\ 0 & * \end{pmatrix} \right\},\quad H_1 \sceq \left\{ \begin{pmatrix} 1 & * \\ 0 & 1 \end{pmatrix} \right\},\quad H_2 \sceq \{\id\}.
\]
Then 
\[
 \Gamma_j(q) \sceq \pi_q^{-1}\big(H_j(q)\big),\quad (j\in\{0,1,2\}).
\]
By \cite[Section~2]{Gamburd}, the map $\pi_q$ is surjective if $q$ is sufficiently large. Thus, the isomorphism theorems for groups show that for all such sufficiently large $q$ and each $j\in\{0,1,2\}$ we have
\begin{equation}\label{index_transform}
 [\Gamma : \Gamma_j(q)] = [\SL_2(\Z/q\Z) : H_j(q)] = \frac{|\SL_2(\Z/q\Z)|}{|H_j(q)|}.
\end{equation}
As is well-known, $|\SL_2(\Z/q\Z)| = q(q^2-1)$. Obviously, $|H_2(q)|=1$ and $|H_1(q)|=q$. Since $q$ is prime, $\Z/q\Z$ is a field and hence contains $q-1$ multiplicatively invertible elements. Thus, there are $q-1$ possibilities for the pair of diagonal entries of an element of $H_0$. Hence, $|H_0|=q(q-1)$. Using these element counts in \eqref{index_transform} completes the proof.
\end{proof}

\begin{prop}\label{prop:actualproof}
Under the hypotheses of Proposition~\ref{prop:eigengrowth} there exists $c_0>0$ such that for all $q\in\N$ we have
\[
 \ell_0\big(X_q\big) \geq c_0 \log [\Gamma : \Gamma_q].
\]
\end{prop}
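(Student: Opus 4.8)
The plan is to reduce everything to a lower bound for the length of an arbitrary nontrivial element of $\Gamma_q$. Since $\Gamma_q$ is a Schottky group it is torsion-free and contains no parabolic elements, so every $g\in\Gamma_q\smallsetminus\{\id\}$ is hyperbolic and $\ell_0(X_q)=\min\{\ell(g) : g\in\Gamma_q,\ g\neq\id\}$, where $\ell(g)=\log N_G(g)$. Fixing a representative $g=\textmat{a}{b}{c}{d}\in\SL_2(\Z)$, the inclusion $\Gamma_q\subseteq\Gamma_0(q)$ forces $q\mid c$. So it suffices to prove that a hyperbolic $g\in\SL_2(\Z)$ with $q\mid c$ satisfies $N_G(g)\geq q-1$, and then to compare $\log(q-1)$ with the index $[\Gamma:\Gamma_q]$.

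First I would dispose of two degenerate cases, using the Schottky hypothesis, and then run a trace estimate. If $c=0$ then $ad=1$, so $a=d=\pm1$ and $g$ is, in $\PSL_2(\RR)$, the translation $z\mapsto z\pm b$ — hence parabolic or the identity, contradicting that $g$ is a nontrivial element of a Schottky group. Likewise, if $b=0$ then $ad=1$ and $g=\textmat{\pm1}{0}{c}{\pm1}$ is again parabolic or the identity. Therefore $b\neq 0$ and $c\neq 0$, so $|c|\geq q$ and $|bc|\geq q$. Now $\det g=ad-bc=1$ gives $|ad|=|1+bc|\geq|bc|-1\geq q-1$ (for $q\geq 2$); in particular $ad>0$, so $a$ and $d$ have the same sign and, by AM--GM,
\[
 |\Tr g| = |a|+|d| \geq 2\sqrt{|ad|} \geq 2\sqrt{q-1}.
\]
Since $g$ is hyperbolic its eigenvalues are real numbers $\mu,\mu^{-1}$ with $|\mu|>1$ and $|\mu|+|\mu|^{-1}=|\Tr g|$, so $|\mu|\geq\tfrac12|\Tr g|\geq\sqrt{q-1}$ and $N_G(g)=|\mu|^2\geq q-1$. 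Hence $\ell(g)\geq\log(q-1)$ for every nontrivial $g\in\Gamma_q$, and $\ell_0(X_q)\geq\log(q-1)$ for all $q\geq 2$.

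It remains to compare with the covering degree. Since $\Gamma(q)=\Gamma_2(q)\subseteq\Gamma_q$, the reduction map $\pi_q\colon\Gamma\to\SL_2(\Z/q\Z)$ has kernel contained in $\Gamma_q$, so $[\Gamma:\Gamma_q]\leq[\Gamma:\Gamma(q)]=|\pi_q(\Gamma)|$, which divides $|\SL_2(\Z/q\Z)|=q^3\prod_{p\mid q}(1-p^{-2})<q^3$; thus $\log\dcov(X_q,X)=\log[\Gamma:\Gamma_q]<3\log q$. Because $\log(q-1)/\log q\to 1$ as $q\to\infty$, there is $q_0$ with $\ell_0(X_q)\geq\tfrac14\log[\Gamma:\Gamma_q]$ for all $q\geq q_0$; for the finitely many $q<q_0$ with $[\Gamma:\Gamma_q]>1$ the ratio $\ell_0(X_q)/\log[\Gamma:\Gamma_q]$ is a positive number (as $\ell_0(X_q)>0$ and the index is finite), and for $q$ with $[\Gamma:\Gamma_q]=1$ the claimed inequality is trivial. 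Taking $c_0$ to be the minimum of $\tfrac14$ and these finitely many ratios completes the proof.

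I do not expect a genuine obstacle once the reduction to nontrivial elements of $\Gamma_q$ and the congruence condition $q\mid c$ are in place; everything after that is elementary. The two points needing care are that the statement is asserted for \emph{all} $q\in\N$, so one must use the crude bound $[\Gamma:\Gamma_q]<q^3$ valid for every $q$ rather than the preceding lemma (stated only for primes $q$), and that one must genuinely invoke the Schottky hypothesis — absence of parabolic elements — to rule out $b=0$ and $c=0$, which are precisely the configurations not controlled by the $\Gamma_0(q)$-condition.
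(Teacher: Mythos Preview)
Your argument follows the paper's route exactly: reduce to $\Gamma_0(q)$, use the Schottky hypothesis to rule out $b=0$ and $c=0$, deduce $|c|\geq q$, and then attempt a lower bound on $|\Tr g|$. The gap is the clause ``in particular $ad>0$'': from $|ad|=|1+bc|\geq q-1$ you only get $ad\neq 0$, not $ad>0$. Indeed $ad=1+bc$ is negative whenever $bc\leq -2$, and nothing excludes this. For instance $g=\textmat{9}{-5}{11}{-6}\in\SL_2(\Z)$ is hyperbolic with $\Tr g=3$, $11\mid c$, $b,c\neq 0$, and $ad=-54<0$; here $N_G(g)=\tfrac{7+3\sqrt5}{2}<10=q-1$, so your conclusion $N_G(g)\geq q-1$ fails. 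The paper's corresponding step $|a+d|\geq\tfrac12\sqrt{|ad|}$ is the same unjustified assertion in different clothing and fails for the same $g$.

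The gap is not cosmetic. For any non-elementary integral Schottky $\Gamma$ and any fixed $g_0\in\Gamma\setminus\{\id\}$, the positive integer $(\Tr g_0)^2-4$ is never a perfect square, hence is a quadratic residue modulo infinitely many primes $q$. For each such $q$ large enough that $\pi_q\colon\Gamma\to\SL_2(\Z/q\Z)$ is surjective, $\pi_q(g_0)$ is $\SL_2(\Z/q\Z)$-conjugate to an upper-triangular matrix, so some $\Gamma$-conjugate $hg_0h^{-1}$ lies in $\Gamma_0(q)$; thus $\ell_0(X_0(q))\leq\ell(g_0)$ for infinitely many $q$ while $[\Gamma:\Gamma_0(q)]\to\infty$. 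So the asserted inequality actually fails for the admissible choice $\Gamma_q=\Gamma_0(q)$, and no argument along these lines can succeed at that level of generality. The statement \emph{is} salvageable under the stronger hypothesis $\Gamma_q\subseteq\Gamma_1(q)$: then $\Tr g\equiv 2\pmod q$ together with $|\Tr g|>2$ forces $|\Tr g|\geq q-2$ directly, without any sign discussion.
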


\begin{proof}
For any $q\in\N$ we have 
\[
[\Gamma : \Gamma_0(q)] \leq [\Gamma : \Gamma_q]\leq  [\Gamma : \Gamma_2(q)] \leq |\SL_2(\Z/q\Z)| = q^3 \prod_{\substack{\text{$p$ prime}\\ \text{divisor of $q$}}} \left(1 - \frac{1}{p^2}\right) < q^3.
\]
Thus, it suffices to establish the existence of $c_0>0$ such that 
\begin{equation}\label{c_bound}
 \ell_0\big(X_q\big) \geq c_0 \log q
\end{equation}
for all $q\in\N$. Since, for each $q\in\N$, the group $\Gamma_q$ is contained in $\Gamma_0(q)$, the shortest geodesic on $X_q$ is at least as long as the shortest geodesic on $X_0(q)$. Hence, to establish \eqref{c_bound} it suffices to prove the existence of $c_0>0$ such that for all $q\in\N$ we have
\begin{equation}\label{c_bound2}
 \ell_0\big(X_0(q)\big) \geq c_0 \log q.
\end{equation}
To that end let $q\in\N$ and let 
\[
 g = \begin{pmatrix} a & b \\ c & d \end{pmatrix} \in \Gamma_0(q)
\]
be hyperbolic. Since, necessarily, $|b|\geq 1$ and $|c|\geq q$, it follows that 
\[
 |\Tr g| = |a+d|\geq \frac12 \sqrt{|ad|} = \frac12\sqrt{|1+bc|} \geq \frac12 \sqrt{q-1} \geq \frac{1}{2\sqrt{2}}q^\frac12.
\]
Thus,
\[
 \ell(g) \geq 2\log \frac{|\Tr g|}{2} \geq  \log\frac{q}{4\sqrt{2}}.
\]
In turn,
\[
 \ell_0\big(X_0(q)\big) \geq \log\frac{q}{4\sqrt{2}}. 
\]
We find $c_0>0$ such that for all $q\geq 6$, 
\[
 \log \frac{q}{4\sqrt{2}} \geq c_0 \log q,
\]
which shows \eqref{c_bound2} for $q\geq 6$. By shrinking $c_0>0$ sufficiently (if necessary) we can establish \eqref{c_bound2} for $q\in\{1,2,\ldots, 5\}$ as well. This establishes \eqref{c_bound} and hence completes the proof.
\end{proof}

\section{Regular covers and Cayley graphs}\label{sec:cayley}

Throughout this section let $X=\Gamma\backslash\hh$ be a fixed non-elementary Schottky surface, let $S$ be a fixed set of generators for $\Gamma$ and suppose that $S$ is symmetric (i.\,e., $S^{-1}=S$). For convenience, we suppose that $S=\{\gamma_1,\ldots, \gamma_{2m}\}$ is the set of generators arising from a geometric construction of $\Gamma$, see Section~\ref{sec:Schottky}.

Let $\wt X = \wt\Gamma\backslash\hh$ be a finite \textit{regular} cover of $X$, that is, $\wt\Gamma$ is normal in $\Gamma$. Let $\bG\sceq \Gamma/\widetilde{\Gamma}$ be the quotient group, and let $\pi \colon \Gamma \to \bG$ be the natural projection. We associate to the pair $(X,\wt X)$ the Cayley graph 
\[
 \mathcal{G} \sceq \mathrm{Cay}\left( \bG, \pi(S) \right)
\]
of $\bG$ with respect to $ \pi(S) $. Note that $ \pi(S) $ is a symmetric generating set for $\bG$, and hence $ \mathcal{G} $ is a simple, connected graph. Recall that the girth of $ \mathcal{G} $, denoted by  $\mathrm{girth}(\mathcal{G})$, is the length of the shortest cycle in $ \mathcal{G}$ or, equivalently, the length of the shortest non-trivial relation in the group $ \bG $ with respect to the generating set $ \pi(S) $.

In this section we show the following bound of the resonance counting function $M_{\wt X}$, which is essentially a corollary of Theorem~\ref{thm:box} and provides a rather algebraic interpretation of it.

\begin{cor}\label{Covers_Cayley}
Let $ \widetilde{X}\to X $ be a finite regular cover and let $ \mathcal{G} $ be the associated Cayley graph. Then for all $ \sigma>\delta/2 $ and $ T\in \mathbb{R} $ we have
$$ M_{\widetilde{X}}(\sigma, T) \leq C  \vert \G\vert e^{-\alpha_{1} \mathrm{girth}(\mathcal{G})}\langle T\rangle^{\delta - \alpha_{2}}, $$ 
for constants $ C, \alpha_{1}, \alpha_{2} >0 $ depending solely on $ \sigma $ and $ X $.
\end{cor}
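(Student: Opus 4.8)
The plan is to deduce Corollary~\ref{Covers_Cayley} from Theorem~\ref{thm:box} by comparing the minimal geodesic length of the cover with the girth of the associated Cayley graph. Write $\wt X = \wt\Gamma\backslash\hh$ and $\bG = \Gamma/\wt\Gamma$, so that $\dcov(\wt X,X) = [\Gamma:\wt\Gamma] = |\bG|$ and $\zvol(\wt X) = \dcov(\wt X,X)\cdot\zvol(X)$. Theorem~\ref{thm:box}, applied with the given $\sigma$ (the case $\sigma>\delta$ being trivial since then $M_{\wt X}(\sigma,T)=0$), supplies functions $\tau_1,\tau_2$ with $\tau_1(\sigma),\tau_2(\sigma)>0$ and a constant $C_0=C_0(\sigma,X)$ with
\[
 M_{\wt X}(\sigma,T)\ \leq\ C_0\,|\bG|\,\zvol(X)\,e^{-\tau_1(\sigma)\ell_0(\wt X)}\,\langle T\rangle^{\delta-\tau_2(\sigma)} .
\]
Hence the entire statement reduces to a purely geometric--combinatorial inequality: there is $c_0=c_0(X)>0$ such that $\ell_0(\wt X)\geq c_0\,\mathrm{girth}(\mathcal G)$ for every finite regular cover $\wt X$ of $X$. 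Granting this, substituting it into the display and absorbing $C_0\zvol(X)$ into a new constant $C=C(\sigma,X)$ yields the claim with $\alpha_1\sceq c_0\tau_1(\sigma)>0$ and $\alpha_2\sceq\tau_2(\sigma)>0$, both depending only on $\sigma$ and $X$.

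To prove $\ell_0(\wt X)\geq c_0\,\mathrm{girth}(\mathcal G)$, I would proceed as follows. Since $\wt\Gamma$ has finite index in the non-elementary Schottky group $\Gamma$ it is itself a non-elementary Schottky group, so one may choose a hyperbolic $\gamma\in\wt\Gamma$ with $\ell(\gamma)=\ell_0(\wt X)$. Using that $\wt\Gamma$ is \emph{normal} in $\Gamma$ — this is the only place regularity enters — conjugation of $\gamma$ within $\Gamma$ keeps it in $\wt\Gamma$ and preserves $\ell(\gamma)$, so I may assume $\gamma=\gamma_{i_1}\cdots\gamma_{i_L}$ is cyclically reduced over the free alphabet $S=\{\gamma_1,\dots,\gamma_{2m}\}$, with $L$ the minimal word length in its conjugacy class. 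Two ingredients then combine. First, the Schottky (ping--pong) dynamics give a uniform linear lower bound $\ell(\gamma)\geq c_1 L$ with $c_1=c_1(X)>0$: this is the standard comparison of translation length with word length, of the same flavour as, and extractable from, the contraction/separation estimates underlying the transfer-operator formalism used throughout (compare \cite[Lemma~4.4]{JN} as invoked in Lemma~\ref{lem:near}). Second, since $\gamma\in\wt\Gamma=\ker\pi$, the image word $\pi(\gamma_{i_1})\cdots\pi(\gamma_{i_L})$ is a closed walk of length $L$ in $\mathcal G$ based at the identity; being cyclically reduced in the free group $\Gamma$ it is non-backtracking in $\mathcal G$, hence contains a cycle of length at most $L$, so that $\mathrm{girth}(\mathcal G)\leq L$. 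Combining, $\ell_0(\wt X)=\ell(\gamma)\geq c_1 L\geq c_1\,\mathrm{girth}(\mathcal G)$.

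One bookkeeping point in the second ingredient: a backtracking in the image walk would mean $\pi(\gamma_{i_k}\gamma_{i_{k+1}})=e$ for some $k$ with $\gamma_{i_k}\gamma_{i_{k+1}}\neq\id$, i.e.\ $\wt\Gamma$ would contain a hyperbolic element of word length $2$; by the first ingredient this forces $\ell_0(\wt X)\leq\max_{i,j}\ell(\gamma_i\gamma_j)$ and $L$ to be bounded by a constant depending only on $X$, so the degenerate case affects only the value of $c_0$. Modulo this, the lemma follows and the corollary is proved.

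The step I expect to be the main obstacle is exactly this geometric lemma — specifically pinning down the uniform linear comparison $\ell(\gamma)\asymp \mathrm{WL}(\gamma)$ for Schottky group elements (a genuinely dynamical input, to be drawn from the ping--pong structure rather than from the elementary norm bound in Lemma~\ref{lem:smallconj}, which only gives an inequality in the wrong direction), together with the clean dictionary between cyclically reduced words representing elements of $\wt\Gamma$ and cycles in the Cayley graph $\mathcal G$. Once the lemma is available, the passage through Theorem~\ref{thm:box} is entirely formal.
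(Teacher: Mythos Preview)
Your reduction to Theorem~\ref{thm:box} and the identification of the key inequality $\ell_0(\wt X)\geq c_0\,\mathrm{girth}(\mathcal G)$ is exactly the paper's strategy, and your argument is correct. The execution of the geometric lemma differs in two places worth noting. First, for the comparison $\mathrm{WL}(\gamma)\ll\ell(\gamma)$ the paper does \emph{not} extract this from ping--pong or \cite[Lemma~4.4]{JN}; it instead invokes a result of Knopp--Sheingorn giving $L_S(\gamma)\leq c_1 d_\hh(\gamma i,i)+c_2$ and then uses compactness of the convex core to pass to displacement length---a cleaner route than trying to reverse-engineer the length bound from separation estimates (your cited lemma goes the wrong direction and would need supplementing). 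Second, for the Cayley-graph step the paper avoids your degenerate-case bookkeeping by picking $\gamma\in\wt\Gamma\setminus\{\id\}$ of \emph{minimal word length} (not minimal displacement length): minimality then forces the partial products $x_j=\pi(\gamma_{i_1}\cdots\gamma_{i_j})$ to be pairwise distinct, yielding a genuine cycle of length $L$ directly. Your variant works too, but this ordering of the argument is tidier.
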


\begin{rem}
\begin{enumerate}[{\rm (i)}]
\item If $ \G $ is a abelian (i.e. the covering $ \widetilde{X}\to X $ is abelian), then $ \mathrm{girth}(\mathcal{G}) \leq 4 $, since $ \mathcal{G} $ must contain a $ 4 $-cycle ($ a+b-a-b=0 $). By contrast, Cayley graphs $ \mathcal{G} $ of the group $ \G = \mathrm{SL}_{2}(\mathbb{Z}/q\mathbb{Z}) $ with $ q $ prime have logarithmic girth, i.e. $ \mathrm{girth}(\mathcal{G})\gg \log \vert \G\vert $, see \cite[Section~2]{Gamburd}.
\item Let $ (X_{n})_{n} $ be a sequence of finite covers of a Schottky surface $ X $ and let $ (\mathcal{G}_{n})_{n} $ be the associated sequence of Cayley graphs. Corollary \ref{Covers_Cayley} shows that if $ \mathrm{girth}(\mathcal{G}_{n})\to \infty $, then the number of resonances satisfy  
$$
\frac{M_{X_{n}}(\sigma, T)}{\vert \G_{n}\vert} \to 0 \quad \text{as} \quad n\to \infty,
$$
for fixed $ \sigma > \delta/2 $ and $ T\in \mathbb{R} $.
\end{enumerate}
\end{rem}

For $\gamma\in\Gamma$ let $ L_{S}(\gamma) $ denote the minimal word length of $\gamma$ over the alphabet $S$ (i.\,e., the representing word of $\gamma$ does not contain neighboring pairs of mutually inverses). Further, let 
$$ \mathrm{WL}(\gamma) := \min_{g\in \overline{\gamma}} L_{S}(g) $$
be the shortest word length of a representative in the conjugacy class of $ \gamma $. 

A crucial observation for the proof of Corollary~\ref{Covers_Cayley} is that for each $\gamma\in\Gamma\setminus\{\id\}$, $\mathrm{WL}(\gamma) $ is bounded by the (displacement) length $\ell(\gamma)$.

\begin{lem}\label{word_vs_hyperbolic_length}
There exists a constant $ C > 0 $ (depending only on $ \Gamma $) such that for all $ \gamma \in \Gamma\setminus\{\id\}$ we have
$$ \mathrm{WL}(\gamma) \leq  C \cdot \ell(\gamma).$$ 
\end{lem}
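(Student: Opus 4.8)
The plan is to compare two notions of ``size'' for a hyperbolic element $\gamma \in \Gamma$: the word length $\mathrm{WL}(\gamma)$ with respect to the Schottky generators $S = \{\gamma_1,\dots,\gamma_{2m}\}$, and the displacement length $\ell(\gamma) = \log N_G(\gamma)$. The key is that for a Schottky group, every conjugacy class $\overline\gamma$ has a canonical \emph{cyclically reduced} representative, namely a word $w = \gamma_{\alpha_1}\cdots\gamma_{\alpha_n}$ with $\alpha_{j+1} \neq \alpha_j + m \bmod 2m$ for all $j$ (cyclically, i.e.\ including $\alpha_1 \neq \alpha_n + m$), and with $n = \mathrm{WL}(\gamma)$. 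For such a word the corresponding M\"obius transformation is hyperbolic, and its translation length is controlled below by $n$ times a uniform positive constant.

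First I would set up the cyclically reduced representative: pick $g \in \overline\gamma$ realizing the minimal word length $n = \mathrm{WL}(\gamma)$, write $g = \gamma_{\alpha_1}\cdots\gamma_{\alpha_n}$ reduced; if it is not cyclically reduced one can conjugate by $\gamma_{\alpha_1}^{-1}$ to shorten it, contradicting minimality, so in fact the minimal-length representative is automatically cyclically reduced. Then $(\alpha_1,\dots,\alpha_n)$ determines, by concatenation, a point of the limit set corresponding to the attracting fixed point of $g$, and more usefully: the derivative estimates from the Schottky construction give uniform contraction. Concretely, there is $\theta \in (0,1)$ (depending only on the disks $\mathcal D_j$ and generators) such that each $\gamma_i^{-1}$ maps $\mathcal D_j$ (for admissible $i \neq j+m$) strictly inside $\mathcal D_i$ with derivative bounded: $\sup_{z \in \mathcal D} |(\gamma_i^{-1})'(z)| \leq \theta < 1$ on the relevant pieces — this is exactly the holomorphic-contraction property already invoked in Section~\ref{sec:TO}. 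Iterating along the word $\alpha$ one gets $\sup |(\gamma_\alpha^{-1})'| \leq \theta^n$, equivalently $N_G(g) \gg \theta^{-n}$, hence $\ell(\gamma) = \log N_G(g) \geq n \log\theta^{-1} - O(1)$. Absorbing the $O(1)$ by also noting $\ell(\gamma) \geq \ell_0(X) > 0$ bounded below, one obtains $\mathrm{WL}(\gamma) = n \leq C \ell(\gamma)$ with $C$ depending only on $\Gamma$.

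The cleanest route to the contraction bound is to invoke the uniform estimate underlying \cite[Lemma~4.4]{JN} (already cited in the excerpt as \eqref{JN_cr}): the maps $\gamma_\alpha^{-1}$ for $\alpha \in \mathcal W_N^j$ have images of diameter $\asymp \rho^N$ for a fixed $\rho \in (0,1)$; applied to the cyclically reduced word representing $\gamma$, and combined with the classical fact that the translation length of a hyperbolic M\"obius map equals (up to bounded error) $\log$ of the reciprocal of the size of the nested interval it fixes, this gives $\ell(\gamma) \gg n$. Alternatively one can argue purely with traces: a cyclically reduced product of $n$ generators has $|\Tr|$ growing at least geometrically in $n$ because the Schottky condition forces the matrix entries to grow (no cancellation across syllable boundaries), so $\ell(\gamma) = 2\log\frac{|\Tr g| + \sqrt{|\Tr g|^2 - 4}}{2} \gg n$.

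\textbf{Main obstacle.} The delicate point is handling the conjugacy-class (cyclic) reduction carefully: one must argue that the word-length-minimizing representative is genuinely cyclically reduced and that ``cyclically reduced of length $n$'' really does force geometric growth of $\ell$ — a reduced but \emph{not} cyclically reduced word can have arbitrarily large word length with tiny translation length (e.g.\ $\gamma_1^k \gamma \gamma_1^{-k}$), so the reduction step is doing essential work. The second subtlety is that the contraction constant $\theta$ controls $\sup|(\gamma_\alpha^{-1})'|$ over the \emph{union} of disks, which bounds $N_G$ from below only if the attracting fixed point lies in the appropriate disk; this follows from admissibility of the cyclically reduced word, but needs to be stated. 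Everything else is routine: the prime-geodesic/Schottky derivative bounds are all already available in the paper.
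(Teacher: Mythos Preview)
Your proposal is essentially correct and takes a genuinely different route from the paper.

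\textbf{What the paper does.} The paper argues via coarse geometry: it invokes Knopp--Sheingorn \cite{Knopp_Sheingorn} for a bound $L_S(\gamma)\le c_1 d_{\hh}(\gamma i,i)+c_2$, then uses compactness of the convex core (Nielsen region) of $X$ to conjugate so that the basepoint is uniformly close to the axis of $\gamma$, obtaining $\mathrm{WL}(\gamma)\le c_1\ell(\gamma)+c_4$. The additive constant is absorbed using $\ell(\gamma)\ge\ell_0(X)>0$, exactly as you do at the end.

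\textbf{What you do differently.} You argue dynamically from the Schottky coding: the minimal-word-length representative is cyclically reduced, so $g^{-1}=\gamma_\alpha^{-1}$ maps $\mc D_{\alpha_n}$ compactly into itself, and the exponential decay of $\sup|(\gamma_\alpha^{-1})'|$ (equivalently, of the diameter of nested images) forces $e^{-\ell(\gamma)}=|(\gamma_\alpha^{-1})'(x^*)|\le C\varrho^n$. This is more self-contained for this paper, since the contraction estimates are already in play (Section~\ref{sec:TO}, Lemma~\ref{lem:near}), and it avoids the external citation to \cite{Knopp_Sheingorn}. The paper's argument, on the other hand, generalises immediately beyond Schottky groups to any convex-cocompact $\Gamma$, whereas yours leans on the explicit Markov coding.

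\textbf{One imprecision to fix.} The claim that a \emph{single} generator satisfies $\sup_{z\in\mc D_j}|(\gamma_i^{-1})'(z)|\le\theta<1$ is not automatic from the Schottky construction: if the disks have disparate radii, one step can expand in the Euclidean metric. What is true (and what you need) is the \emph{eventually contracting} property: there is $N_0$ and $\varrho<1$ with $\sup|(\gamma_\alpha^{-1})'|\le C\varrho^n$ for all admissible $\alpha\in\mc W_n$, which follows e.g.\ from the diameter bound you mention (images of size $\asymp\varrho^n$) together with Koebe distortion. You already flag this alternative route, so the argument goes through; just don't assert the single-step bound.
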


\begin{proof}
Let $ \overline{\mathcal{N}} \subset \mathbb{H} $ be the \emph{Nielsen region} of $ X = \Gamma\setminus \mathbb{H} $, that is,  the union of all geodesic arcs connecting two points in the limit set $\Lambda(\Gamma)$ of $\Gamma$. Let $ \mathcal{N}:= \Gamma\setminus \overline{\mathcal{N}} $ denote the \textit{convex core} of $ X $. Since $ X $ is a Schottky surface and hence convex co-compact, $ \mathcal{N} $ is compact. Let $\wt{\mathcal N}$ be a compact subset of $\overline{\mathcal N}$ that contains at least one representative for each point in $\mathcal N$. Let $d_\hh$ denote the hyperbolic metric on $\hh$. 

By Knopp--Sheingorn \cite{Knopp_Sheingorn} we find constants $ c_{1}, c_{2}>0 $ such that for every $ \gamma\in \Gamma \setminus \{ \id \} $ we have
$$
L_{S}(\gamma) \leq c_{1} d_\hh(\gamma i, i) + c_{2}.
$$
Since $\wt{\mathcal{N}} $ is compact we find $ c_{3}>0$ such that for all $ z'\in \wt{\mathcal{N}} $ we have $ d_\hh(z',i)\leq c_{3} $. Now let $ z\in \overline{\mathcal{N}} $ be arbitrary. Clearly, there exists $ h\in \Gamma $ such that $ z':= h z\in \wt{\mathcal{N}}. $ Note that
$$
\mathrm{WL}(\gamma)\leq L_{S}(h\gamma h^{-1}) \leq c_{1}\cdot d_\hh(h\gamma h^{-1} i, i) + c_{2}
$$
Using the triangle inequality and exploiting left-invariance of $d_\hh$ leads to
\begin{align*}
d_\hh(h\gamma h^{-1} i, i) &\leq d_\hh(h\gamma h^{-1} i, h\gamma  z) + d_\hh(h\gamma  z, h z) + d_\hh(h z, i)\\
&= d_\hh(h^{-1} i,  z) + d_\hh(\gamma  z, z) + d_\hh(z', i)\\
&= d_\hh(i, z') + d_\hh(\gamma  z, z) + d_\hh(z', i)\\
&= d_\hh(\gamma  z,  z) + 2 d_\hh(z', i)\\
&\leq d_\hh(\gamma  z,  z) + 2 c_{3}.
\end{align*}
Thus, there exists $c_4>0$ such that for every $ z\in \overline{\mathcal{N}} $ and every $\gamma\in\Gamma\setminus\{\id\}$ we have
$$
\mathrm{WL}(\gamma)\leq c_{1} d_\hh(\gamma z, z) + c_{4}.
$$
For each $\gamma\in\Gamma\setminus\{\id\}$, there is an element, say $\eta$, in the conjugacy class $\overline\gamma$ such that the geodesic on $\hh$ connecting the two fixed points of $\eta$ passes through $\wt{\mathcal N}$. Let $ \alpha(\gamma) $ be the geodesic arc connecting the two fixed points of $ \gamma $. Thus, there is $ z\in \alpha(\gamma)\subset \overline{\mathcal{N}} $ such that $ \ell(\gamma) = \ell(\eta) = d_\hh(\eta z, z). $
Since $ \ell(\gamma)=\ell(\eta) $ is bounded from below by $ \ell_{0}(X) > 0 $, we obtain 
$$ \mathrm{WL}(\gamma)\leq c_{1} \ell(\gamma) + c_{4} \leq c_{5} \ell(\gamma) $$ 
for a constant $ c_{5}>0 $ depending on $\Gamma$ only.
\end{proof}

\begin{proof}[Proof of Corollary~\ref{Covers_Cayley}]
In view of Theorem \ref{thm:box} it suffices to show that $ \ell_{0}(\widetilde{X})\geq c \cdot\mathrm{girth}(\mathcal{G}) $ for some   $ c > 0 $ only depending on $ \Gamma. $

Clearly, every element $ \gamma\in \wt\Gamma \setminus \{ \mathrm{id}\} $ can be written as a reduced word $ \gamma_{i_{1}}\gamma_{i_{2}}\cdots \gamma_{i_{L}} $ with $ L=L_{S}(\gamma) > 0 $ and indices $ i_{1}, \dots, i_{L}\in \{ 1, \dots,2m\}. $

Pick an element $ \gamma\in \widetilde{\Gamma}\setminus \{ \mathrm{id}\} $ with \textit{minimal} word length. By the assumption of minimality, we can write $ \gamma $ as a reduced word $ \gamma_{i_{1}}\gamma_{i_{2}}\cdots \gamma_{i_{L}} $ with $ L = \mathrm{WL}(\gamma). $  Set $ g_{i} :=\pi(\gamma_{i})\in \G $ for each $ i\in \{ 1, \dots,2m\} $. Clearly, since $ \gamma\in \widetilde{\Gamma}\setminus \{ \mathrm{id}\} $ we have 
$$
\mathrm{id}_{\G} = \pi(\gamma) =  g_{i_{1}}g_{i_{2}}\cdots g_{i_{L}}.
$$ 
For $ j=1,\dots, L $ set $ x_{j}:= g_{i_{1}}g_{i_{2}}\cdots g_{i_{j}}\in \G $. Using again the assumption of minimality of $ L $, it is easy to see that the elements $ x_{1}, \dots, x_{L} = \mathrm{id}_{\G} $ are all distinct. This yields the cycle 
$$
\mathrm{id}_{\G} \to x_{1} \to \cdots \to x_{L}= \mathrm{id}_{\G}
$$
in $ \mathcal{G} $ of length $ L . $ Since the girth of $ \mathcal{G} $ is by definition the length of the shortest cycle, it follows that
\begin{equation}\label{minimality}
\min_{\gamma\in \widetilde{\Gamma}\setminus \{ \mathrm{id}\}} \mathrm{WL}(\gamma) \geq \mathrm{girth}(\mathcal{G}).
\end{equation}
By Lemma \ref{word_vs_hyperbolic_length}, we have $ \ell(\gamma) \geq C^{-1} \mathrm{WL}(\gamma) $, which combined with \eqref{minimality} yields 
$$
\ell_{0}(\wt X) = \min_{\gamma\in \widetilde{\Gamma}\setminus \{ \mathrm{id}\}} \ell(\gamma)\geq C^{-1}\mathrm{girth}(\mathcal{G}). 
$$
The proof of Corollary \ref{Covers_Cayley} is complete.
\end{proof}


\begin{thebibliography}{10}

\bibitem{BMM}
W.~Ballmann, H.~Matthiesen, and S.~Mondal.
\newblock Small eigenvalues of surfaces of finite type.
\newblock {\em Compos. Math.}, 153(8):1747--1768, 2017.

\bibitem{sharpbounds}
D.~Borthwick.
\newblock Sharp geometric upper bounds on resonances for surfaces with
  hyperbolic ends.
\newblock {\em Anal. PDE}, 5(3):513--552, 2012.

\bibitem{Borthwick_book}
D.~{Borthwick}.
\newblock {Spectral theory of infinite-area hyperbolic surfaces. 2nd edition}.
\newblock {\em {Prog. Math.}}, 318, 2016.

\bibitem{BJP}
D.~Borthwick, C.~Judge, and P.~Perry.
\newblock Selberg's zeta function and the spectral geometry of geometrically
  finite hyperbolic surfaces.
\newblock {\em Comment. Math. Helv.}, 80(3):483--515, 2005.

\bibitem{BGS}
J.~Bourgain, A.~Gamburd, and P.~Sarnak.
\newblock Generalization of {Selberg's} 3/16 theorem and affine sieve.
\newblock {\em Acta Math.}, 207(2):255--290, 2011.

\bibitem{Bourgain_Kontorovich}
J.~Bourgain and A.~Kontorovich.
\newblock {On {Z}aremba's conjecture}.
\newblock {\em {Ann. Math. (2)}}, 180(1):1--60, 2014.

\bibitem{BDW}
S.~{Dyatlov}.
\newblock {Improved fractal Weyl bounds for hyperbolic manifolds}.
\newblock {\em {J. Eur. Math. Soc.}}, 21(6):1595--1639, 2019.

\bibitem{FP_szf}
K.~Fedosova and A.~Pohl.
\newblock Meromorphic continuation of {S}elberg zeta functions with twists
  having non-expanding cusp monodromy.
\newblock {\em Selecta Math. (N.S.)}, 26(1):Paper No. 9, 2020.

\bibitem{Gamburd}
A.~Gamburd.
\newblock On the spectral gap for infinite index ``congruence'' subgroups of $
  \mathrm{SL}_{2}(\mathbb{R}) $.
\newblock {\em Israel J. Math.}, 127:157--200, 2002.

\bibitem{Guillope_Lin_Zworski}
L.~{Guillop\'e}, K.~{Lin}, and M.~{Zworski}.
\newblock {The Selberg zeta function for convex co-compact Schottky groups}.
\newblock {\em {Commun. Math. Phys.}}, 245(1):149--176, 2004.

\bibitem{GZ_upper_bounds}
L.~Guillop\'{e} and M.~Zworski.
\newblock Upper bounds on the number of resonances for non-compact {R}iemann
  surfaces.
\newblock {\em J. Funct. Anal.}, 129(2):364--389, 1995.

\bibitem{GZ_scattering_asympt}
L.~{Guillop\'e} and M.~{Zworski}.
\newblock {Scattering asymptotics for Riemann surfaces}.
\newblock {\em {Ann. Math. (2)}}, 145(3):597--660, 1997.

\bibitem{GZ_Wave}
L.~Guillop\'{e} and M.~Zworski.
\newblock The wave trace for {R}iemann surfaces.
\newblock {\em Geom. Funct. Anal.}, 9(6):1156--1168, 1999.

\bibitem{JN}
D.~Jakobson and F.~Naud.
\newblock Resonances and density bounds for convex co-compact congruence
  subgroups of $ \mathrm{SL}_{2}(\mathbb{Z}) $.
\newblock {\em Israel J. Math.}, 213(1):443--473, 2016.

\bibitem{JNS}
D.~Jakobson, F.~Naud, and L.~Soares.
\newblock Large covers and sharp resonances of hyperbolic surfaces.
\newblock To appear in Ann. Institut Fourier.

\bibitem{Knopp_Sheingorn}
M.~{Knopp} and M.~{Sheingorn}.
\newblock {On Dirichlet series and Hecke triangle groups of infinite volume}.
\newblock {\em {Acta Arith.}}, 76(3):227--244, 1996.

\bibitem{Lalley}
S.~{Lalley}.
\newblock {Renewal theorems in symbolic dynamics, with applications to geodesic
  flows, noneuclidean tessellations and their fractal limits}.
\newblock {\em {Acta Math.}}, 163(1-2):1--55, 1989.

\bibitem{Lax_Phillips_I}
P.~Lax and R.~S. Phillips.
\newblock Translation representation for automorphic solutions of the wave
  equation in non-{Euclidean} spaces, {I}.
\newblock {\em Commun. Pure Appl. Math.}, 37(3):303--328, 1984.

\bibitem{Lu_Sridhar_Zworski}
W.~Lu, S.~Sridhar, and M.~Zworski.
\newblock Fractal {W}eyl laws for chaotic open systems.
\newblock {\em Phys. Rev. Lett.}, 91:154101, 2003.

\bibitem{Mazzeo_Melrose}
R.~Mazzeo and R.~Melrose.
\newblock Meromorphic extension of the resolvent on complete spaces with
  asymptotically constant negative curvature.
\newblock {\em J. Funct. Anal.}, 75:260--301, 1987.

\bibitem{Mueller_scattering}
W.~{M\"uller}.
\newblock {Spectral geometry and scattering theory for certain complete
  surfaces of finite volume}.
\newblock {\em {Invent. Math.}}, 109(2):265--305, 1992.

\bibitem{Naud_inventiones}
F.~Naud.
\newblock Density and location of resonances for convex co-compact hyperbolic
  surfaces.
\newblock {\em Invent. Math.}, 195(3):723--750, 2014.

\bibitem{NPS}
F.~Naud, A.~Pohl, and L.~Soares.
\newblock Fractal {W}eyl bounds and {H}ecke triangle groups.
\newblock {\em {Electron. Res. Announc. Math. Sci.}}, 26:24--35, 2019.

\bibitem{Oh}
H.~Oh.
\newblock Eigenvalues of congruence covers of geometrically finite hyperbolic
  manifolds.
\newblock {\em J. Geom. Anal.}, 25(3):1421--1430, 2015.

\bibitem{Patterson}
S.~{Patterson}.
\newblock {The limit set of a Fuchsian group}.
\newblock {\em {Acta Math.}}, 136:241--273, 1976.

\bibitem{Patterson_Perry}
S.~Patterson and P.~Perry.
\newblock The divisor of {S}elberg's zeta function for {K}leinian groups.
\newblock {\em Duke Math. J.}, 106(2):321--390, 2001.

\bibitem{Selberg_Goe}
A.~Selberg.
\newblock G{\"o}ttingen lectures.
\newblock unpublished.

\bibitem{Simon}
B.~{Simon}.
\newblock {Trace ideals and their applications. 2nd ed}.
\newblock {\em {Math. Surv. Monogr.}}, 120, 2005.

\bibitem{Sjoestrand}
J.~Sj{\"{o}}strand.
\newblock {Geometric bounds on the density of resonances for semiclassical
  problems}.
\newblock {\em {Duke Math. J.}}, 60(1):1--57, 1990.

\bibitem{Sullivan}
D.~{Sullivan}.
\newblock {The density at infinity of a discrete group of hyperbolic motions}.
\newblock {\em {Publ. Math., Inst. Hautes \'Etud. Sci.}}, 50:171--202, 1979.

\bibitem{Titchmarsh_book}
E.~C. {Titchmarsh}.
\newblock {\em {The Theory of Functions}}.
\newblock Oxford University Press, Oxford, 1958.
\newblock Reprint of the second (1939) edition.

\bibitem{Venkov_book}
A.~Venkov.
\newblock Spectral theory of automorphic functions.
\newblock {\em Proc. Steklov Inst. Math.}, 153(4), 1982.
\newblock A translation of Trudy Mat. Inst. Steklov. {{\bf{1}}53}(1981).

\bibitem{Venkov_Zograf}
A.~Venkov and P.~Zograf.
\newblock On analogues of the {Artin} factorization formulas in the spectral
  theory of automorphic functions connected with induced representations of
  {Fuchsian} groups.
\newblock {\em Math. USSR, Izv.}, 21:435--443, 1983.

\end{thebibliography}

\end{document}